\newcommand{\Mbar}{\overline{M}}
\newcommand{\bdpt}{\beta}
\newcommand{\abctree}{
\begin{tikzpicture}[baseline={([yshift={-0.7*\ht\strutbox}]current bounding box.north)}]
\foreach \y in {-90, 30, 150} {
  \draw (0, 0) -- (\y:0.2); };
\fill (0, 0) circle (0.04);
\end{tikzpicture}
}
\newcommand{\abcntree}{
\begin{tikzpicture}[baseline={([yshift={-0.9*\ht\strutbox}]current bounding box.north)}]
\foreach \y in {10, 50, 90, 130, 170, 210, 250, 290, 330}
  { \draw (0, 0) -- (\y:0.2); };
\fill (0, 0) circle (0.03);
\end{tikzpicture}
}
\newcommand{\sixtree}{
\begin{tikzpicture}[baseline={([yshift={-0.9*\ht\strutbox}]current bounding box.north)}]
\foreach \y in {0, 60, 120, 180, 240, 300}
  { \draw (0, 0) -- (\y:0.2); };
\fill (0, 0) circle (0.03);
\end{tikzpicture}
}
\newcommand{\bP}{\mathbb{P}}
\newcommand{\PP}{\mathbb{P}}
\newcommand{\beq}{\begin{equation}}
\newcommand{\eeq}{\end{equation}}
\newcommand{\mb}[1]{\mathbb{#1}}
\newcommand{\mr}[1]{\operatorname{#1}}
\newcommand{\defn}{\textbf}
\newcommand{\emb}{\Omega}
\newcommand{\kapn}{|\psi_n|}
\newcommand{\kapi}{|\psi_i|}
\newcommand{\kap}{\psi}
\newcommand{\Tour}{\mathrm{Tour}}
\newcommand{\Slide}{\mathrm{Slide}}
\newcommand{\slide}{\mathrm{slide}}
\newcommand{\partition}{\sigma}
\newcommand{\hyp}{H}
\newtheorem{thm}{Theorem}
\newtheorem{lemma}[thm]{Lemma}
\newtheorem{prop}[thm]{Proposition}
\newtheorem{corollary}[thm]{Corollary}
\numberwithin{thm}{section}
\numberwithin{equation}{section}
\numberwithin{figure}{section}
\newtheorem*{FamiliesThm}{Theorem \ref{thm:families}}
\newtheorem*{StrataPsi}{Lemma \ref{thm:strata-psi}}
\newtheorem*{MainThm}{Theorem \ref{thm:main}}
\newtheorem*{MainCor}{Corollary \ref{cor:main}}
\theoremstyle{definition}
\newtheorem{example}[thm]{Example}
\newtheorem{definition}[thm]{Definition}
\newtheorem{remark}[thm]{Remark}
\newtheorem{problem}[thm]{Problem}
\newtheorem*{defHyp}{Definition \ref{def:all-hyperplanes}}
\def\multichoose#1#2{\left<\genfrac{}{}{0pt}{}{#1}{#2}\right>}
\title[Degenerations and multiplicity-free formulas on $\Mbar_{0,n}$]{Degenerations and multiplicity-free formulas \\ for products of $\psi$ and $\omega$ classes on $\overline{M}_{0,n}$}
\author{Maria Gillespie}
\thanks{Maria Gillespie was partially supported by NSF DMS award number 2054391.}
\address{Department of Mathematics, Colorado State University, Fort Collins, CO, USA} \email{maria.gillespie@colostate.edu} 
\author{Sean T. Griffin}
\thanks{Sean T. Griffin was partially supported by NSF Grant DMS-1439786 while in residence at the Institute for Computational and Experimental Research in Mathematics in Providence, RI, during the Spring 2021 semester.}
\address{Department of Mathematics, University of California Davis, Davis, CA, USA}
\email{stgriffin@ucdavis.edu}
\author{Jake Levinson}
\thanks{Jake Levinson was partially supported by an AMS Simons Travel Grant and by NSERC Discovery Grant RGPIN-2021-04169.}
\address{Department of Mathematics, Simon Fraser University, Burnaby, BC, Canada}
\email{jake\_levinson@sfu.ca}
\date{\today}
\begin{document}

\maketitle{}

\begin{abstract}

We consider products of $\psi$ classes and products of $\omega$ classes on $\Mbar_{0,n+3}$.  For each product, we construct a flat family of subschemes of $\Mbar_{0,n+3}$ whose general fiber is a complete intersection representing the product, and whose special fiber is a generically reduced union of boundary strata. Our construction is built up inductively as a sequence of one-parameter degenerations, using an explicit parametrized collection of hyperplane sections.  Combinatorially, our construction expresses each product as a positive, multiplicity-free sum of classes of boundary strata. These are given by a combinatorial algorithm on trees we call \emph{slide labeling}. As a corollary, we obtain a combinatorial formula for the $\kappa$ classes in terms of boundary strata.
 
 For degree-$n$ products of $\omega$ classes, the special fiber is a finite reduced union of (boundary) points, and its cardinality is one of the \emph{multidegrees} of the corresponding embedding $\Omega_n:\Mbar_{0,n+3}\to \PP^1\times \cdots \times \PP^n$. In the case of the product $\omega_1\cdots \omega_n$, these points exhibit a connection to permutation pattern avoidance.  Finally, we show that in certain cases, a prior interpretation of the multidegrees via \textit{tournaments} can also be obtained by degenerations.
\end{abstract}

\section{Introduction}

Let $\Mbar_{0,n+3}$ be the Deligne--Mumford moduli space \cite{deligne-mumford1969} of complex genus $0$ stable curves $C$ with $n+3$ marked points labeled by the set $\{a, b, c, 1, \ldots, n\}$. Write $\psi_i$ for the $i$-th \emph{psi class}, the first Chern class of the line bundle $\mathbb{L}_i$ whose fiber over a marked curve $(C;p_a,p_b,p_c,p_1,\ldots,p_n)\in \Mbar_{0,n}$ is the cotangent space to $C$ at the $i$-th marked point $p_i$.

We also define $\omega_i$ to be the $i$-th \emph{omega class}, the pullback of $\psi_i$ under the forgetting map $\pi:\Mbar_{0,n+3}\to \Mbar_{0,i+3}$ obtained by forgetting the marked points $p_{i+1}, \ldots, p_n$.

In this paper, we consider products in the Chow ring $A^\bullet(\Mbar_{0,n+3})$ of the form
\begin{equation} \label{eq:intersection-products}
\psi^{\mathbf{k}} := \psi_1^{k_1} \cdots \psi_n^{k_n}, \qquad \omega^{\mathbf{k}} := \omega_1^{k_1} \cdots \omega_n^{k_n},
\end{equation}
where $\mathbf{k} = (k_1, \ldots, k_n)$ is a tuple of nonnegative integers and $\sum k_i \leq n$. We introduce a family of subschemes of $\Mbar_{0,n+3}$, whose general member is a complete intersection representing $\psi^\mathbf{k}$ or $\omega^\mathbf{k}$, and whose special fiber degenerates to a generically reduced union of boundary strata.  We furthermore give a combinatorial algorithm that produces the resulting strata, in terms of the dual trees corresponding to these strata.

Our construction is by giving explicit parametrized hyperplane sections coming from the associated line bundles. The $\psi$ and $\omega$ classes give rise to two natural projective maps from $\Mbar_{0,n+3}$:
\begin{align} \label{eq:psi-n}
    \Psi_n=|\psi_1|\times \cdots \times |\psi_n|: \Mbar_{0, n+3} &\rightarrow \mathbb{P}^n \times \mathbb{P}^n \times \cdots \times \mathbb{P}^n, \\
    \label{eq:omega-n}
    \emb_n=|\omega_1|\times \cdots \times |\omega_n|: \Mbar_{0, n+3} &\hookrightarrow \mathbb{P}^1 \times \mathbb{P}^2 \times \cdots \times \mathbb{P}^n.
\end{align}

The first map is the combined or \emph{total Kapranov map} given by the psi classes, while the second map, sometimes called the \emph{iterated} Kapranov map (see \cite{CGM, GGL-tournaments,KeT, MonRan}), is an embedding and is given by the omega classes. Hyperplane sections of these maps represent the intersection products \eqref{eq:intersection-products} in $A^\bullet(\Mbar_{0,n+3})$ above.

When $\sum k_i = n$, it is well-known that the product of psi classes $\psi^\mathbf{k}$ is the multinomial coefficient $\binom{n}{k_1, \ldots, k_n}$ times the class of a point. The product of omega classes $\omega^{\mathbf{k}}$ is the so-called \emph{asymmetric multinomial coefficient} $\multichoose{n}{k_1, \ldots, k_n}$ times the class of a point \cite{CGM, GGL-tournaments}.

When $\sum k_i < n$, the products $\psi^{\mathbf{k}}$ and $\omega^{\mathbf{k}}$ represent positive-dimensional cycle classes, and by standard formulas they can be expressed as products of sums of boundary strata of $\Mbar_{0,n+3}$. In particular, using the notation $D(A|A^c)$ for the boundary divisor in which marked points $A$ are separated by $A^c$ by a node, two standard formulas for psi classes and boundary strata are
\begin{align}
    \psi_i &= \sum_\star D(i,\star \mid j,k,\star^c), \label{eq:psi-boundary} \\
    D(A \mid A^c)^2 &= -D(A \mid A^c) \bigg(\sum_\star D(a_1,a_2,\star^c \mid \star, A^c) + \sum_\star D(A,\star \mid \star^c, b_1,b_2) \bigg), \label{eq:boundary-self-intersection}
\end{align}
where in each summation, the two specified marked points ($j, k$ in the first sum, $a_1, a_2 \in A$ in the second, $b_1, b_2 \in A^c$ in the last) are arbitrary and fixed, and $\star$ ranges over all nonempty subsets of the unspecified marked points.  One can repeatedly use these formulas to expand any product of $\psi$ classes in terms of boundary divisors, but the resulting possible expressions are not unique, and it is unclear if any such expressions are actually achievable as fundamental classes of complete intersections of $\Mbar_{0,n+3}$ by hyperplanes.  Moreover, many such expansions result in alternating sums or terms with multiplicity (see Example \ref{ex:psi1psi2}), despite the fact that these products are necessarily effective and, as we will show, can be represented by generically reduced unions of boundary strata. Related work on products of psi classes includes \cite{hahn2021intersecting, kerber2009psi,smyth2018intersections}.

Our approach is as follows. For each $\mathbf{k}$, we introduce a parametrized hyperplane intersection $V^{\psi}(\mathbf{k};\vec{t})$ for $\psi^{\mathbf{k}}$ (respectively, $V^{\omega}(\mathbf{k};\vec{t})$ for $\omega^{\mathbf{k}}$) on $\Mbar_{0,n+3}$ in a tuple of parameters $\vec{t}$. We show that under a specific limit $\vec{t} \to \vec{0}$, the resulting vanishing locus on $\Mbar_{0,n+3}$ degenerates into a generically reduced union of boundary strata (Theorem \ref{thm:main}). In fact, these strata may be obtained by two closely-related combinatorial rules we call ($\psi$- and $\omega$-) \emph{slide labelings} of trees (Theorem \ref{thm:labelings}).  As a corollary, we obtain combinatorial formulas in $A^\bullet(\Mbar_{0,n+3})$ for the products $\psi^{\mathbf{k}}$ and $\omega^{\mathbf{k}}$ as positive, muliplicity-free sums of boundary strata, which moreover arise as limits of complete intersections.  A complete example of our construction, for the product $\psi_1 \psi_2$, is given in Example \ref{ex:degeneration}.

\subsection{Degenerations and slide rules}

For each $i=1, \ldots, n$, let $|\kap_i| : \Mbar_{0,n+3} \to \bP^n$ be the $i$-th Kapranov map. Let $|\omega_i| : \Mbar_{0,n+3} \to \bP^i$ be the $i$-th reduced Kapranov map, that is,
\[|\omega_i| : \Mbar_{0,n+3} \xrightarrow{\ \pi \ } \Mbar_{0,i+3} \xrightarrow{\ |\psi_i|\ } \mathbb{P}^i.\]
We give $\bP^n$ projective coordinates $[z_b:z_c:z_1:\cdots:\widehat{z_i}:\cdots:z_n]$ (where $\widehat{z_i}$ indicates that $z_i$ is omitted) and $\PP^i$ the coordinates $[w_b : w_c : w_1 : \cdots : w_{i-1}]$. Here, the hyperplane $z_j = 0$ pulls back to the union of divisors $\bigcup D(i\star|aj\star^c)$, and $w_j = 0$ is the pullback of such a hyperplane under the forgetting map $\pi$. (See Section \ref{sec:background} for background on the Kapranov map and these conventions.)

Let $t$ be a parameter. We consider the following moving hyperplane equations for $\psi_i$ and $\omega_i$.

\begin{definition}[Moving hyperplanes for $\psi_i$ and $\omega_i$] \label{def:hyperplanes}
We define the hyperplane loci
\begin{align}
\hyp_{i}^\psi(t) &= \mathbb{V}(z_b + t z_c + t^2 z_1 + \cdots + t^{i}z_{i-1} + t^{i+1}z_{i+1} + \cdots + t^n z_n) \subseteq \mathbb{P}^n, \\
\hyp_i^\omega(t) &= \mathbb{V}(w_b + t w_c + t^2 w_1 + \cdots + t^{i}w_{i-1}) \subseteq \mathbb{P}^i.
\end{align}
\end{definition}
Our construction relies on the key fact that, for $t\neq 0$, the hyperplane $\hyp_i^\psi(t)$ in $\mathbb{P}^n$ is transverse to \emph{every} boundary stratum of $\Mbar_{0,n+3}$ of every dimension. Moreover, the limiting intersection as $t \to 0$ is always a reduced union of boundary strata, which we describe by a uniform combinatorial rule. Below, we write $X_T$ for the stratum indexed by the stable tree $T$ and $\slide_i(T)$ for a set of trees defined combinatorially in Definition \ref{def:slide-rule} via \textit{slide rules}.

\begin{lemma}\label{thm:strata-psi}
  Let $T$ be a stable tree.
  Let $V_i(t)=|\psi_i|^{-1}(\hyp_i^\psi(t))$ in $\Mbar_{0,n+3}$. Then the limiting fiber is given by
  $$\lim_{t\to 0}V_i(t)\cap X_T=\bigcup_{T'\in\slide_i(T)} X_{T'}$$
  and is reduced.
\end{lemma}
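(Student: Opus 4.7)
The plan is to compute the flat limit of the family $V_i(t) \cap X_T$ stratum by stratum, working explicitly in projective coordinates and exploiting the known description of the Kapranov map $|\psi_i|$ restricted to each boundary stratum.

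For $t \neq 0$, the transversality of $\hyp_i^\psi(t)$ with every boundary stratum (noted just before the lemma) implies that $V_i(t) \cap X_T$ is a reduced Cartier divisor in $X_T$, cut out by the pullback of a linear form with polynomial-in-$t$ coefficients. Thus the family extends to a flat family over $\mathbb{A}^1$, and we only need to identify its central fiber. The key tool is the explicit description of $|\psi_i|(X_T) \subseteq \PP^n$: letting $v_i$ be the vertex of $T$ containing the marked point $i$, the edges and non-$i$ marked points incident to $v_i$ determine a partition of $\{b, c, 1, \ldots, n\}\setminus\{i\}$ into \emph{branches at $v_i$} (each non-$i$, non-$a$ marked point at $v_i$ is a singleton branch, and each edge $e$ at $v_i$ contributes the marked-point set of the subtree across $e$). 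On $|\psi_i|(X_T)$, all coordinates $z_j$ with $j$ in the same branch $B$ are proportional because they record the same nodal position on the component $C_{v_i}$; we write $z_j = Z_B$ for $j \in B$, with the convention $Z_B \equiv 0$ when $B$ is the branch in the direction of $a$.

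Substituting into the defining equation of $\hyp_i^\psi(t)$ and grouping by branch gives
\[
\sum_B \Bigl(\sum_{j \in B} t^{\epsilon(j)}\Bigr) Z_B = 0,
\]
where $\epsilon(j)$ is the exponent of $t$ multiplying $z_j$ in Definition~\ref{def:hyperplanes}. Setting $\epsilon_B = \min_{j \in B}\epsilon(j)$ and $\epsilon_{\min}$ equal to the minimum of $\epsilon_B$ over branches with $Z_B \not\equiv 0$, dividing by $t^{\epsilon_{\min}}$ and letting $t \to 0$, the central fiber on $X_T$ is cut out by the pullback of $\sum_{B:\,\epsilon_B = \epsilon_{\min}} Z_B = 0$. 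Geometrically this locus is a union of boundary sub-strata $X_{T'}$ of $X_T$, each obtained by further degenerating the component at $v_i$ so that one minimum-$\epsilon$ branch is separated from the rest along a new edge; matching this combinatorics with Definition~\ref{def:slide-rule} yields exactly the set $\slide_i(T)$.

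Reducedness follows because the leading-order equation $\sum Z_B = 0$ is a linear form with distinct unit coefficients on independent local coordinates of $|\psi_i|(X_T)$, so each limiting component appears with multiplicity one. The main obstacle will be verifying this combinatorial matching, especially in the case of ties where several branches simultaneously achieve $\epsilon_{\min}$: one must show that each tree in $\slide_i(T)$ corresponds to a unique limiting component and that no extra embedded components appear. A secondary subtlety is the role of the base marked point $a$: the branch containing $a$'s subtree contributes identically zero to the hyperplane equation and must be excluded from the leading-order minimum, and one must verify that this exclusion is compatible with the combinatorial rule.
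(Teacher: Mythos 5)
Your computational core coincides with the paper's own argument: restrict the defining equation of $\hyp_i^\psi(t)$ to the linear space $|\psi_i|(X_T)=P_\sigma$ described by Proposition~\ref{lem:kap-arbitrary-tree}, group coordinates according to the branches at $i$, discard the $a$-branch (whose coordinates vanish identically), factor out the lowest power of $t$, and set $t=0$. One remark on your set-up: the tie case you single out as a potential difficulty cannot occur. The exponents $\epsilon(j)$ in Definition~\ref{def:hyperplanes} are pairwise distinct and the branches are disjoint sets of labels, so exactly one branch attains $\epsilon_{\min}$, namely the branch containing the $i$-minimal marked point $m$; the limiting equation is therefore the single coordinate $z_m=0$ (equivalently $y_m=0$ in the coordinates of the factor $\Mbar_{0,\deg(v_i)}$ of \eqref{eq:stratum-factors}), exactly as in the paper.

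The step you defer as ``the main obstacle'' is, however, the remaining content of the lemma, and your provisional description of the components is not correct as stated: the locus $\{z_m=0\}\cap X_T$ is not obtained by splitting off the minimal branch ``from the rest.'' By Proposition~\ref{lem:kap-arbitrary-tree}, a curve in $X_T$ satisfies $z_m=0$ precisely when $m$ and $a$ lie on the same branch at $i$, so the components are the strata $X_{T'}$ where a new edge separates $i$ from \emph{both} $a$ and $m$ (i.e.\ $\mathrm{Br}_m$ joins the $a$-side) and contracting that edge recovers $T$, with the remaining branches at $v_i$ distributed between the two vertices in every stable way; this is where the $2^{\deg(v_i)-3}-1$ trees of $\slide_i(T)$ come from, whereas splitting off $\mathrm{Br}_m$ alone would give a single tree that does not even lie in $\{z_m=0\}$. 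The paper closes this gap by the standard fact that the coordinate hyperplane $z_m=0$ pulls back, through \eqref{eq:stratum-factors}, to the \emph{reduced} union of divisors $\bigcup_\star D(i\star \mid a m \star^c)$ on the factor $\Mbar_{0,\deg(v_i)}$, combined with Remark~\ref{rmk:slide-contraction}, which characterizes $\slide_i(T)$ as exactly these one-edge degenerations. Note also that this pullback fact is what gives reducedness on $X_T$; your appeal to ``distinct unit coefficients on independent local coordinates'' only addresses the divisor inside $P_\sigma$, not its preimage in the stratum.
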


For any fixed tree $T$, the right hand side above can instead be obtained by intersecting $X_T$ with a hyperplane of the form $z_j = 0$, though the particular $z_j$ depends on $T$. Intersections of the form $X_T\cap \{z_j=0\}$ are well-known and may be derived from \eqref{eq:psi-boundary}. The novelty here is the use of a single moving hyperplane for all strata $X_T$, which moreover has the following useful property.

\begin{lemma}[Injectivity]
If $T \ne T'$, the sets of trees $\slide_i(T)$ and $\slide_i(T')$ are disjoint.
\end{lemma}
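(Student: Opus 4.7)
The plan is to prove injectivity by exhibiting a combinatorial inverse to the slide rule: namely, a canonical map
\[
\mathrm{unslide}_i : \bigsqcup_T \slide_i(T) \longrightarrow \{\text{stable trees}\}, \qquad T' \longmapsto T,
\]
depending only on the datum $(T', i)$ and not on $T$. The existence of such a map immediately gives that $\slide_i(T) \cap \slide_i(T'')=\emptyset$ whenever $T\ne T''$.

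The first step is to unpack Definition \ref{def:slide-rule}. I expect that each operation producing $T'$ from $T$ is a local move that detaches the leaf $p_i$ from its vertex in $T$ and reattaches it along an edge on a prescribed side of the tree (the side determined by the reference markings $a$, and the pair of markings playing the role of $j,k$ in \eqref{eq:psi-boundary}). Under such a move, $T'$ acquires a distinguished edge $e(T',i)$ adjacent to (or separating) the vertex carrying $p_i$, and contracting $e(T',i)$ recovers $T$. So the job is to characterize $e(T',i)$ intrinsically from $T'$ and $i$ alone.

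The second step is the intrinsic characterization of $e(T',i)$. I would argue that among the edges of $T'$ incident to the vertex of $p_i$, exactly one is the ``slid'' edge: the slide rule prescribes a specific orientation (toward the component containing $a$ and the fixed reference markings) and a specific combinatorial position (the one obtained by the last slide step in the inductive application of the rule). The key claim is that this position is detectable in $T'$ by a local criterion that does not require knowing $T$—for example, by being the unique edge adjacent to $p_i$ whose contraction yields a stable tree compatible with the slide rule running ``in reverse.'' Once this is established, $\mathrm{unslide}_i(T') := T'/e(T',i)$ is well-defined, and the lemma follows.

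The main obstacle is precisely this intrinsic characterization: one must rule out the scenario where two distinct trees $T \ne T''$ both slide, via different local moves, to the same tree $T'$. In that hypothetical situation there would be two different edges of $T'$ both playing the role of $e(T', i)$, and we need to show the slide rule of Definition \ref{def:slide-rule} is rigid enough to distinguish them. I expect this to reduce to a finite case analysis on trees differing by a single edge-contraction and re-expansion near $p_i$; alternatively, it can be read off geometrically from Lemma \ref{thm:strata-psi}, since reducedness of $\lim_{t\to 0} V_i(t)\cap X_T$ combined with the transversality of $H^\psi_i(t)$ to every boundary stratum forces the indexing of components to be injective on $T$.
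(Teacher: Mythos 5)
Your overall strategy---define an ``unslide'' map by contracting a canonically determined edge of $T'$, depending only on $(T',i)$---is exactly the route the paper takes, but you stop short at the decisive point, and the step you defer as the ``main obstacle'' is in fact immediate from Definition \ref{def:slide-rule}. An $i$-slide keeps the leaf $i$ attached to $v_i$ (your description of the move as detaching and reattaching $p_i$ is not what the slide does, although Remark \ref{rmk:slide-contraction} shows the net effect is indeed a single edge expansion); it inserts the new vertex $\overline{v}$ in the middle of the edge $e_a$ joining $v_i$ to the branch $\mathrm{Br}_a$ containing $a$, and moves $\mathrm{Br}_m$ and possibly other branches to $\overline{v}$. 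Consequently, in any $T'\in\slide_i(T)$ the first edge on the path from $v_i$ toward $a$ is precisely the newly created edge $\{v_i,\overline{v}\}$, and contracting it merges $v_i$ and $\overline{v}$, reassembling all branches at a single vertex and recovering $T$. Since any tree has exactly one edge at $v_i$ in the direction of $a$, the assignment $T'\mapsto T'/e_A$ is well defined from $(T',i)$ alone; there is never a second candidate edge to rule out, so the finite case analysis you anticipate is unnecessary and the disjointness follows in one line.

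Your proposed geometric fallback, by contrast, does not work. Lemma \ref{thm:strata-psi} gives reducedness of $\lim_{t\to 0}V_i(t)\cap X_T$ for each fixed $T$ separately; it says nothing about whether the same stratum $X_{T'}$ can occur in the limits attached to two different strata $X_T$ and $X_{T''}$ (which a priori it could, since boundary strata are far from disjoint), and that cross-$T$ comparison is exactly what injectivity asserts. Moreover, the logical order in the paper is the reverse of what you suggest: the injectivity lemma is an input to the multiplicity-one/generic-reducedness statement of Theorem \ref{thm:main}, so extracting injectivity from reducedness of the limiting intersections would be circular. The purely combinatorial contraction argument above is the proof; you had identified the right distinguished edge (the one toward $a$) but needed to commit to it and observe that it is canonical.
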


This lemma leads directly to the generic reducedness statement in Theorem \ref{thm:main} below.

We now define vanishing loci $V^{\psi}(\mathbf{k};\vec{t})$ and $V^{\omega}(\mathbf{k};\vec{t})$ as intersections with, for each $i$, $k_i$ hyperplanes $\hyp_i^\psi(t)$ or $\hyp_i^{\omega}(t)$ (Definition \ref{def:hyperplanes}), with independent parameters.

\begin{definition}\label{def:all-hyperplanes}
Let $\mathbf{k}=(k_1,\ldots,k_n)$ be a weak composition.  Let $\vec{t}=(t_{i,j})$ for $1\le i\le n$ and $1\le j \le k_i$ be a tuple of complex parameters. We denote the subschemes cut out in $\Mbar_{0,n+3}$ by the hyperplanes $\hyp_i^\psi(t_{i,j})$ and $\hyp_i^\omega(t_{i,j})$ as
\begin{align}
V^\psi(\mathbf{k};\vec{t})
= \bigcap_{i=1}^n \bigcap_{j=1}^{k_i} \Psi_n^{-1}(\hyp_i^\psi(t_{i,j})),\\
V^\omega(\mathbf{k};\vec{t}) 
= \bigcap_{i=1}^n \bigcap_{j=1}^{k_i} \Omega_n^{-1}(\hyp_i^\omega(t_{i,j})),
\end{align}
where $\Psi_n$ is the total Kapranov map and $\emb_n$ is the iterated Kapranov embedding.
\end{definition}

Our main result is as follows.  There are combinatorially-defined sets of boundary strata, denoted by $\Slide^\psi(\mathbf{k})$ and $\Slide^{\omega}(\mathbf{k})$ (see Definitions \ref{def:slide-psi}--\ref{def:slide-omega}) that give a rule for the limiting intersections of hyperplanes in Definition \ref{def:all-hyperplanes}, with respect to a specific limit.

\begin{thm}\label{thm:main}
Let $\mathbf{k}$ be a weak composition and let $\vec{t} = (t_{i, j})$ for $1 \leq i \leq n$ and $1 \leq j \leq k_i$ be complex parameters. Let $\displaystyle{\lim_{\vec{t} \to \vec{0}}}$ denote the iterated limit
\[\lim_{\vec{t} \to \vec{0}} \big( {-} \big) :=
\lim_{t_{n, k_n} \to 0} \cdots \lim_{t_{n, 1} \to 0} \cdots \cdots
\lim_{t_{2, k_2} \to 0} \cdots \lim_{t_{2, 1} \to 0}\ 
\lim_{t_{1, k_1} \to 0} \cdots \lim_{t_{1, 1} \to 0} \big( {-} \big).\]
(The $i$-th block is empty if $k_i=0$, and $\lim$ denotes the flat limit.) Then we have set theoretically
\begin{equation}
    \lim_{\vec{t} \to \vec{0}} V^\psi(\mathbf{k};\vec{t}) = \bigcup_{T \in \Slide^\psi(\mathbf{k})} X_T \quad \text{ and } \quad \lim_{\vec{t} \to \vec{0}} V^\omega(\mathbf{k};\vec{t}) = \bigcup_{T \in \Slide^\omega(\mathbf{k})} X_T.
    \end{equation}
Moreover, each boundary stratum $X_T$ appearing in the union is an irreducible component and is generically reduced in the limit.
\end{thm}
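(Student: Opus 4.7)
The plan is to prove both statements simultaneously by induction on $N = \sum_i k_i$, peeling off the outermost flat limit at each stage and invoking Lemma \ref{thm:strata-psi} together with the injectivity lemma. The base case $N=0$ gives $V^\psi(\vec 0) = V^\omega(\vec 0) = \Mbar_{0,n+3}$, which is the closure of the single open stratum indexed by the one-vertex stable tree; by convention this tree is the unique element of $\Slide^\psi(\vec 0) = \Slide^\omega(\vec 0)$.

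For the inductive step in the $\psi$ case, let $(i^*, k_{i^*})$ index the outermost limit parameter (so $i^*$ is the largest index with $k_{i^*} > 0$) and write $t = t_{i^*, k_{i^*}}$. Setting $\mathbf{k}' = \mathbf{k} - e_{i^*}$ and letting $\vec{t}'$ denote the remaining parameters, we factor $V^\psi(\mathbf{k}; \vec{t}) = V^\psi(\mathbf{k}'; \vec{t}') \cap \Psi_n^{-1}(\hyp_{i^*}^\psi(t))$. Taking first the inner flat limits $\vec{t}' \to \vec 0$ with $t$ held fixed (valid by the commutation step discussed below), the inductive hypothesis produces
\begin{equation*}
Y(t) := \biggl(\bigcup_{T \in \Slide^\psi(\mathbf{k}')} X_T\biggr) \cap \Psi_n^{-1}(\hyp_{i^*}^\psi(t)).
\end{equation*}
For $t \neq 0$, the hyperplane $\hyp_{i^*}^\psi(t)$ is transverse to every boundary stratum (the key property recorded just before Lemma \ref{thm:strata-psi}), so $Y(t)$ is generically reduced and meets each $X_T$ in pure codimension one. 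Applying $\lim_{t \to 0}$ stratum by stratum via Lemma \ref{thm:strata-psi} gives
\begin{equation*}
\lim_{t\to 0} Y(t) = \bigcup_{T \in \Slide^\psi(\mathbf{k}')} \ \bigcup_{T' \in \slide_{i^*}(T)} X_{T'} = \bigcup_{T'' \in \Slide^\psi(\mathbf{k})} X_{T''},
\end{equation*}
where the last equality uses the inductive characterization $\Slide^\psi(\mathbf{k}) = \bigcup_{T \in \Slide^\psi(\mathbf{k}')} \slide_{i^*}(T)$. The injectivity lemma makes this union disjoint in $T$, so each $X_{T''}$ appears exactly once; combined with the reducedness in Lemma \ref{thm:strata-psi}, this yields generic reducedness of each component. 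Since every $X_{T''}$ arising has codimension exactly $N$ in $\Mbar_{0,n+3}$, no containment among distinct strata is possible, so each is an irreducible component. The $V^\omega$ case is structurally identical, using the $\omega$-analogue of Lemma \ref{thm:strata-psi} together with the $\omega$-slide rules.

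The principal obstacle is justifying the commutation of the inner flat limit with intersection by the fixed divisor $\Psi_n^{-1}(\hyp_{i^*}^\psi(t))$. The cleanest approach is to view the construction as a single family $V^\psi(\mathbf{k}; \vec{t}', t) \to \mathbb{A}^{N-1} \times \mathbb{A}^1$, so that joint flatness over an analytic neighborhood of $(\vec 0, t)$ for fixed generic $t \neq 0$ permits the exchange of the iterated limit with the intersection. Joint flatness follows from transversality of the hyperplane arrangement to every boundary stratum for generic parameters, together with Cohen--Macaulayness of the relevant boundary strata of $\Mbar_{0,n+3}$, so that the defining equations form a regular sequence on each stratum. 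Once this commutation is established, the remainder is the combinatorial bookkeeping carried out in the inductive step above.
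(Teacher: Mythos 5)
Your skeleton for the $\psi$ case matches the paper's (peel off the outermost parameter, apply Lemma \ref{thm:strata-psi} stratum by stratum, use injectivity), and you correctly identify the crux: commuting the inner flat limits with the intersection by the fixed divisor $\Psi_n^{-1}(\hyp_{i^*}^\psi(t))$. But your resolution of that crux does not hold up. The ``joint flatness over a neighborhood of $(\vec 0, t)$'' that you invoke is false for the naive family: when some $k_i\ge 2$, two of the inner hyperplanes become literally equal at $\vec t'=\vec 0$ (both degenerate to $z_b=0$ in the same factor), so the naive fiber jumps in dimension, the defining equations are not a regular sequence there, and no appeal to generic transversality or Cohen--Macaulayness can produce flatness at that special point --- flatness at a special parameter never follows from behavior at generic parameters. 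If instead you mean the saturated family (closure over the generic locus), flatness over the higher-dimensional base is not automatic and is not established. The paper does not attempt any such scheme-theoretic exchange: it only proves a one-sided containment $\lim_{\vec t}\,V \subseteq \lim_{t}\big(\hyp\cap \lim_{\vec t'}V\big)$ and then upgrades it to a set-theoretic equality with generic reducedness by a conservation-of-number argument (Proposition \ref{prop:gen-reduced-intersection}, Lemma \ref{lem:gen-reduced-fiber}, Lemmas \ref{lem:generically-equal-limits}--\ref{lem:generically-equal-limits-iterated}, using \cite[Cor.\ 11.1]{fulton:it}), whose hypotheses are exactly the dimension count and generic reducedness you already have on the candidate limit; the paper even gives an example showing the scheme-theoretic equality you are aiming for can fail. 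Your proof needs this (or an equivalent) mechanism; as written the exchange step is a genuine gap.

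The second gap is the claim that ``the $V^\omega$ case is structurally identical.'' There is no $\omega$-analogue of Lemma \ref{thm:strata-psi} for $i^*<n$: the hyperplane $\hyp_{i^*}^\omega(t)$ is pulled back through the forgetting map, so its interaction with a stratum $X_T\subset\Mbar_{0,n+3}$ is governed by $\pi_{i^*+1,\ldots,n}(T)$, not by $T$, and the plain slide $\slide_{i^*}$ applied after the leaves $i^*+1,\ldots,n$ are present computes the wrong minimal element. Relatedly, $\Slide^\omega(\mathbf{k})$ is defined by interleaving insertions $\pi_j^{-1}$ with slides, so the recursion $\Slide^\omega(\mathbf{k})=\bigcup_{T\in\Slide^\omega(\mathbf{k}-e_{i^*})}\slide_{i^*}(T)$ that your induction needs is simply not the definition when $k_n=\cdots=k_{i^*+1}=0$. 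The paper avoids both problems by a double induction (on $n$, then on $\sum k_i$): the $k_n=0$ case is handled by flat pullback along $\pi_n$ (Lemma \ref{lem:flat-pullbacks}), so the hyperplane that gets peeled off is always the top-index one, where $\Omega_n^{-1}(\hyp_n^\omega)=|\psi_n|^{-1}(\hyp_n^\omega)$ and Lemma \ref{thm:strata-psi} applies verbatim. Your argument either needs that reduction or a proved $\omega$-analogue of the one-hyperplane limit lemma together with the corresponding combinatorial commutation of insertions and slides.
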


As a consequence, we obtain:

\begin{corollary}
\label{cor:main}
Let $\mathbf{k}$ be a weak composition. Then in $A^\bullet(\Mbar_{0,n+3})$ we have
\begin{equation}
\psi^\mathbf{k} = \sum_{T \in \Slide^\psi(\mathbf{k})} [X_T], \qquad 
\omega^\mathbf{k} = \sum_{T \in \Slide^\omega(\mathbf{k})} [X_T].
\end{equation}
\end{corollary}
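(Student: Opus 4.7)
The plan is to deduce the corollary directly from Theorem \ref{thm:main} by invoking the conservation of fundamental classes under flat limits.

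First, I would verify that for generic $\vec{t}$, the scheme $V^\psi(\mathbf{k};\vec{t})$ represents the product $\psi^{\mathbf{k}}$ in $A^\bullet(\Mbar_{0,n+3})$. By construction, each hyperplane $\Psi_n^{-1}(\hyp_i^\psi(t_{i,j}))$ is the vanishing locus of a section of $\mathbb{L}_i$, so it is a divisor of class $\psi_i$. Using the transversality property stated just before Lemma \ref{thm:strata-psi}---that $\hyp_i^\psi(t)$, for $t \neq 0$, meets every boundary stratum transversely---together with an inductive Bertini-style argument, one sees that for $\vec{t}$ in an open dense locus the intersection $V^\psi(\mathbf{k};\vec{t})$ is a proper complete intersection of pure codimension $|\mathbf{k}|$. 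Therefore $[V^\psi(\mathbf{k};\vec{t})] = \psi^{\mathbf{k}}$ in $A^\bullet(\Mbar_{0,n+3})$.

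Next, since the iterated limit in Theorem \ref{thm:main} is a composition of one-parameter flat limits, the fundamental class is preserved at each stage. By Theorem \ref{thm:main}, the limit has support $\bigcup_{T \in \Slide^\psi(\mathbf{k})} X_T$ and is generically reduced along each irreducible component $X_T$; by flatness, it also has pure codimension $|\mathbf{k}|$ matching the general fiber. For a scheme with these properties, the fundamental class in $A^\bullet$ is the unweighted sum of the classes of its top-dimensional components, as any embedded or lower-dimensional components do not contribute. Hence
\[
\psi^{\mathbf{k}} = \Bigl[\lim_{\vec{t} \to \vec{0}} V^\psi(\mathbf{k};\vec{t})\Bigr] = \sum_{T \in \Slide^\psi(\mathbf{k})} [X_T],
\]
which is the first identity. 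The proof of the $\omega$-statement proceeds identically, using the $\omega$-analogs of the transversality statement and of Theorem \ref{thm:main}.

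The main content of the corollary is Theorem \ref{thm:main}; what remains is essentially bookkeeping. The only delicate point I expect is checking that every $X_T$ appearing in the limit has codimension exactly $|\mathbf{k}|$, i.e., that the set $\Slide^\psi(\mathbf{k})$ indexes strata of the correct dimension. This should follow from the recursive construction via the slide rules of Lemma \ref{thm:strata-psi}, in which each successive hyperplane intersection drops the dimension of every stratum by exactly one; so the $|\mathbf{k}|$-fold iteration lands on strata of codimension $|\mathbf{k}|$, matching the expected codimension of $\psi^{\mathbf{k}}$ and $\omega^{\mathbf{k}}$.
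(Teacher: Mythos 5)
Your proposal is correct and follows essentially the same route the paper intends: Corollary \ref{cor:main} is stated as an immediate consequence of Theorem \ref{thm:main}, the implicit argument being exactly yours --- the general fiber is a complete intersection representing $\psi^{\mathbf{k}}$ (resp.\ $\omega^{\mathbf{k}}$), flat one-parameter limits preserve the cycle class, and the limit is a pure-dimensional, generically reduced scheme whose irreducible components are precisely the $X_T$, so its cycle is the multiplicity-free sum $\sum_T [X_T]$. The only loose point is your appeal to transversality plus a Bertini-style argument for generic $\vec{t}$ (the single-hyperplane transversality assertion does not immediately give properness for several hyperplanes drawn from the same pencil), but this is not needed: purity and generic reducedness of the nearby fibers already follow from the flatness and semicontinuity arguments in Lemmas \ref{lem:gen-reduced-fiber} and \ref{lem:generically-equal-limits} used in the proof of Theorem \ref{thm:main}.
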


\begin{example}[A degeneration for $\psi_1\psi_2$] \label{ex:degeneration} Consider the product $\psi_1\psi_2$ on $\Mbar_{0,\{a,b,c,1,2\}}$.  Recall that $\Mbar_{0,\{a,b,c,1,2\}}$ embeds into $\PP^2\times \PP^2$ via $|\psi_1|$ and $|\psi_2|$; we coordinatize $\PP^2\times \PP^2$ as \[[x_b:x_c:x_2]\times [y_b:y_c:y_1].\]
The two hyperplane families in $\PP^2\times \PP^2$ that we will introduce, corresponding to $\psi_1$ and $\psi_2$ in the product, are \[x_b+tx_c+t^2x_2=0 \hspace{1cm}\text{and} \hspace{1cm} y_b+sy_c+s^2y_1=0 \] for parameters $t,s\in \mathbb{C}$.

We first take $t\to 0$, which gives the equation $x_b=0$.  Geometrically, the set of curves in $\Mbar_{0,\{a,b,c,1,2\}}$ that have coordinate $x_b=0$ are precisely those for which the marked point $1$ is separated from $a$ and $b$ by a node, which is the union of the three boundary strata $D(ab|c12)$, $D(abc|12)$, and $D(ab2|1c)$.  (This is a special case of the formula for $\psi_1$ given by Equation \eqref{eq:psi-boundary}.)  

In the second copy of $\PP^2$ in $\PP^2\times \PP^2$, these three boundary strata are precisely the set of curves whose coordinates satisfy either $y_b=0$ or $y_c=y_1$, which we may visualize via Figure \ref{fig:degeneration} as the two boldface blue lines in $\PP^2$.  Then the equation $y_b+sy_c+s^2y_1=0$, drawn as a dashed line in Figure \ref{fig:degeneration}, intersects these strata at two points and approaches the horizontal blue line $y_b=0$ as $s\to 0$. Note that, on the stratum where $y_b = 0$, the equation $y_b + s y_c + s^2 y_1 = 0$ yields the condition $y_c = 0$ as $s \to 0$, since $y_c$ is effectively the leading term.

The two intersection points approach the two boundary points with coordinates $[0:1:0]\times [0:1:1]$ and $[0:0:1]\times [0:0:1]$, shown at right in Figure \ref{fig:degeneration}. These boundary points may also be represented by their \defn{dual trees}:
\begin{center}
\includegraphics{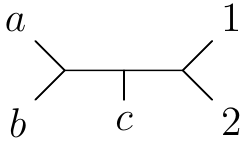}
\hspace{3cm}\includegraphics{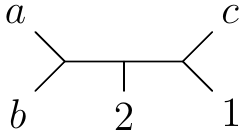}
\end{center}

\begin{figure}
    \centering
    \includegraphics[width=8cm]{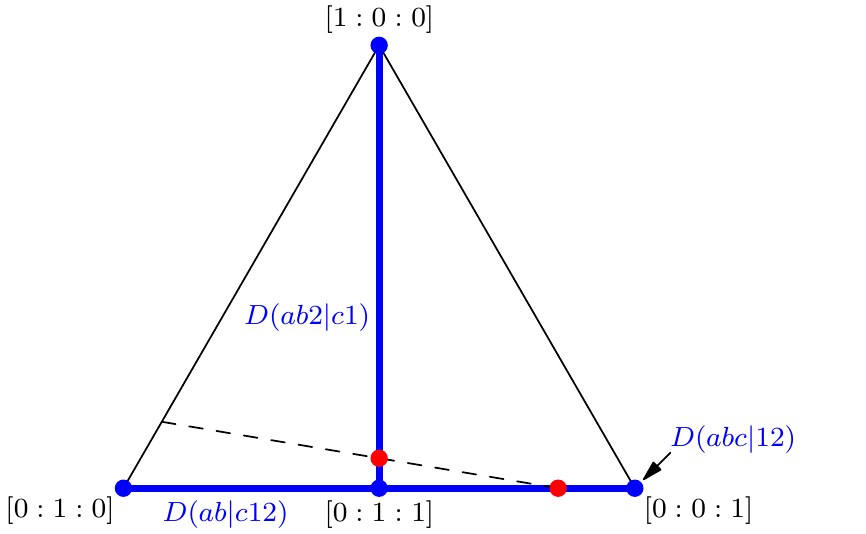}\hspace{1cm}\includegraphics{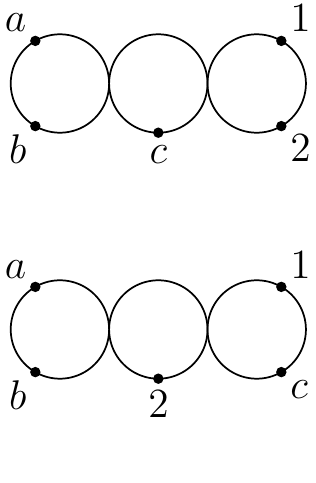}
    \caption{At left, the equation $y_b+sy_c+s^2y_1=0$ shown as a dashed line in $\PP^2$ for a small parameter $s\approx 0$.  It intersects the $\psi_1$ boundary strata, shown in boldface blue, at two points. As $s\to 0$ the two red points of intersection approach the boundary points with $y$-coordinates $[0:1:1]$ and $[0:0:1]$, drawn at right.}
    \label{fig:degeneration}
\end{figure}
\end{example}

Our choice of hyperplanes and the associated combinatorial algorithm always lead to a set of distinct trees for any product of $\psi$ or $\omega$ classes, which is not readily achieved by other known methods for calculating such products, as illustrated by the following example.
\begin{example}\label{ex:psi1psi2}
  We may calculate $\psi_1\psi_2$ directly (but without an explicit realization via hyperplanes) as follows.  By Equation \eqref{eq:psi-boundary}, we have \[
    \psi_1\psi_2=\left(D(ab|c12)+D(abc|12)+D(ab2|c1)\right)\cdot \psi_2.
\]
Expanding out the product on the right hand side, we may think of the first term as intersecting the stratum $D(ab|c12)$ with the $\psi_2$ class restricted to the component containing the marked point $2$.  Choosing $j=1$ and $k=c$ in Equation \eqref{eq:psi-boundary}, we see that this intersection gives the boundary point corresponding to the second tree in Example \ref{ex:degeneration} above.  The middle term vanishes, and for the third term, if we separate $2$ from $j=a$ and $k=b$, we again obtain the same tree as before.  Thus we find again that $\psi_1\psi_2$ is twice the class of a point, but the same tree occurs with multiplicity two in this calculation. 

Of course, all points on $\Mbar_{0,n+3}$ are rationally equivalent. However, the same issue arises for calculating products in positive dimension (even $\psi_1 \psi_2$ on $\Mbar_{0,6}$), where boundary strata are not all equivalent.
\end{example}

For further examples, see Examples \ref{ex:slide-psi} and \ref{ex:slide-omega} for the products $\psi_1\psi_3^2$ and $\omega_1\omega_3^2$, respectively.

\subsection{Application to kappa classes}

Our results and approach also yield positive boundary class formulas for the kappa classes $\kappa_i$ and generalized kappa classes, answering a question of Cavalieri \cite[p.\ 38]{cavalieri2016}. We recall that $\kappa_i$ is defined by pushforward:
\begin{equation} \label{eqn:kappa-def}
\kappa_i := (\pi_{n+1})_*(\psi_{n+1}^{i+1}) \text{ for } i \geq 0,
\end{equation}
where $\pi_{n+1}$ is the forgetting map that forgets the marked point $n+1$. The kappa classes are of particular interest in higher genus, where they are used in defining the tautological ring of $\Mbar_{g,n}$ \cite{VakilNotices}.

Below, we write $v_i$ to denote the internal vertex of a tree to which leaf edge $i$ is attached.  We write $\deg(v_i)$ for the degree of the vertex $v_i$.

\begin{definition}
Let $K(n;i) \subseteq \Slide^{\psi}(0^n,i+1)$ be the subset of trees $T$ in which $\deg(v_{n+1}) = 3$.
\end{definition}

\begin{thm}\label{thm:kappa}
On $\Mbar_{0,\{a,b,c,1,\ldots,n\}}$, we have 
\[\kappa_i=\sum_{T\in K(n;i)}[X_{\pi_{n+1}(T)}].\]
\end{thm}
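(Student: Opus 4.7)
The plan is to apply Corollary~\ref{cor:main} on the larger space $\Mbar_{0,\{a,b,c,1,\ldots,n+1\}}$ and then push forward along the forgetful map $\pi_{n+1}$. Specifically, taking the weak composition $\mathbf{k} = (0,\ldots,0,i+1)$ of length $n+1$, the corollary yields
\[
\psi_{n+1}^{i+1} \;=\; \sum_{T \in \Slide^\psi(0^n, i+1)} [X_T]
\]
in $A^\bullet(\Mbar_{0,n+4})$. Applying $(\pi_{n+1})_*$ to both sides, the left-hand side becomes $\kappa_i$ by definition~\eqref{eqn:kappa-def}, so the task reduces to computing $(\pi_{n+1})_* [X_T]$ termwise for $T \in \Slide^\psi(0^n, i+1)$.

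The relevant pushforward calculation is a standard one for boundary strata and splits into two cases according to the degree of the internal vertex $v_{n+1}$ of $T$ to which the leaf $n+1$ is attached. If $\deg(v_{n+1}) \geq 4$, then $v_{n+1}$ remains stable after forgetting $n+1$, so $\pi_{n+1}(T)$ has the same internal-vertex set as $T$. In this case the restricted map $X_T \to X_{\pi_{n+1}(T)}$ is surjective with generic one-dimensional fibers (the forgotten marked point may move along the component at $v_{n+1}$), so $(\pi_{n+1})_* [X_T] = 0$. If instead $\deg(v_{n+1}) = 3$, then $v_{n+1}$ becomes unstable upon forgetting and is contracted in $\pi_{n+1}(T)$. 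Since the corresponding factor $\Mbar_{0,3}$ in the product decomposition $X_T = \prod_v \Mbar_{0,\deg(v)}$ is a single reduced point, the induced map $X_T \to X_{\pi_{n+1}(T)}$ is an isomorphism, and hence $(\pi_{n+1})_* [X_T] = [X_{\pi_{n+1}(T)}]$.

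By definition $K(n;i)$ is precisely the subset of $\Slide^\psi(0^n, i+1)$ picked out by the condition $\deg(v_{n+1}) = 3$, so combining the two cases yields
\[
\kappa_i \;=\; (\pi_{n+1})_*\!\Bigl(\sum_{T \in \Slide^\psi(0^n, i+1)} [X_T]\Bigr) \;=\; \sum_{T \in K(n;i)} [X_{\pi_{n+1}(T)}],
\]
as claimed. The main technical point is verifying that $X_T \to X_{\pi_{n+1}(T)}$ really is an isomorphism (rather than generically finite of higher degree) in the second case; this is where one uses the product structure of boundary strata and the triviality of $\Mbar_{0,3}$, so it poses no serious obstacle. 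It is worth noting that the map $T \mapsto \pi_{n+1}(T)$ need not be injective on $K(n;i)$, so the displayed sum is understood with multiplicities as written, consistent with the known expansions of $\kappa_i$.
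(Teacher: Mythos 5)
Your proposal is correct and follows essentially the same route as the paper: apply Corollary~\ref{cor:main} with $\mathbf{k}=(0^n,i+1)$ on $\Mbar_{0,\{a,b,c,1,\ldots,n+1\}}$, push forward along $\pi_{n+1}$, and split into the cases $\deg(v_{n+1})>3$ (pushforward vanishes by dimension) and $\deg(v_{n+1})=3$ (isomorphism onto the image stratum). Your closing remark about non-injectivity of $T\mapsto\pi_{n+1}(T)$ and the resulting multiplicities also matches the paper's discussion in Corollary~\ref{cor:multiplicity}.
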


The generalized kappa classes are defined similarly as iterated pushforwards: for $n \geq 3$ and a weak composition $\mathbf{r} = (r_1, \ldots, r_m)$, we define
\begin{equation}\label{eq:generalized-kappa-def}
    R_{n;\mathbf{r}} := (\pi_{n+1, \ldots, n+m})_*(\psi_{n+1}^{r_1} \cdots \psi_{n+m}^{r_m}),
\end{equation}
where $\pi_{n+1, \ldots, n+m}$ is the iterated forgetting map.

\begin{definition}
Let $R(n;\mathbf{r}) \subseteq \Slide^\psi(0^n, r_1, \ldots, r_m)$ be the subset of trees $T$ such that, for each $j = n+1, \ldots, n+m$, the tree $\pi_{j+1, \ldots, n+m}(T)$ has $\deg(v_j) = 3$.
\end{definition}

\begin{thm}\label{thm:generalized-kappa}
On $\Mbar_{0,\{a,b,c,1,\ldots,n\}}$, we have 
\[R_{n;\mathbf{r}} =\sum_{T\in R(n;\mathbf{r})}[X_{\pi_{n+1, \ldots, n+m}(T)}].\]
\end{thm}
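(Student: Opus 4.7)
The plan is to deduce Theorem \ref{thm:generalized-kappa} from Corollary \ref{cor:main} by proper pushforward, generalizing the proof of Theorem \ref{thm:kappa}. By Corollary \ref{cor:main} applied on $\Mbar_{0,n+m+3}$ to the weak composition $(0^n, r_1, \ldots, r_m)$, we have
\[\psi_{n+1}^{r_1}\cdots \psi_{n+m}^{r_m} = \sum_{T \in \Slide^\psi(0^n, r_1, \ldots, r_m)} [X_T],\]
a positive multiplicity-free sum of boundary classes. Applying $(\pi_{n+1, \ldots, n+m})_*$ to both sides yields $R_{n;\mathbf{r}}$ on the left by definition \eqref{eq:generalized-kappa-def}, so the task reduces to evaluating this iterated pushforward on each stratum class $[X_T]$ and identifying which $T$ survive.

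I would factor the iterated forgetting map as $\pi_{n+1} \circ \pi_{n+2} \circ \cdots \circ \pi_{n+m}$, so that we forget the largest-indexed leaf first, and analyze one step at a time. Restricted to a boundary stratum $X_T$, a single forgetting map $\pi_j$ behaves in one of two ways: if $\deg(v_j) \geq 4$ in the current tree, then $X_T \to X_{\pi_j(T)}$ is generically a $\mathbb{P}^1$-fibration, so proper pushforward in the Chow ring gives $(\pi_j)_*[X_T] = 0$; if $\deg(v_j) = 3$, then forgetting the leaf $j$ leaves $v_j$ two-valent, stabilization contracts the leaf edge together with an adjacent edge, and the induced map $X_T \to X_{\pi_j(T)}$ is birational, so $(\pi_j)_*[X_T] = [X_{\pi_j(T)}]$. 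Iterating, $(\pi_{n+1,\ldots,n+m})_*[X_T] = [X_{\pi_{n+1,\ldots,n+m}(T)}]$ exactly when $\deg(v_j) = 3$ in the intermediate tree $\pi_{j+1,\ldots,n+m}(T)$ for every $j \in \{n+1, \ldots, n+m\}$, and vanishes otherwise. This is precisely the condition defining $R(n;\mathbf{r}) \subseteq \Slide^\psi(0^n, r_1, \ldots, r_m)$.

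The step I expect to require the most care is checking that the assignment $T \mapsto \pi_{n+1, \ldots, n+m}(T)$ is injective on $R(n;\mathbf{r})$, so that no multiplicities arise on the right. This reduces to showing that the attachment data of each forgotten trivalent leaf can be reconstructed combinatorially from the slide labeling, which I would verify by induction on $m$: the base case $m = 1$ is Theorem \ref{thm:kappa}, and at each inductive step one invokes the injectivity property of the slide rule (cf.\ the injectivity lemma following Lemma \ref{thm:strata-psi}) to recover the leaf most recently forgotten. Granted this injectivity, linearity of the pushforward together with the case analysis above yields the stated formula.
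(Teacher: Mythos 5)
Your core computation is exactly the paper's proof: apply Corollary \ref{cor:main} to the composition $(0^n,r_1,\ldots,r_m)$, push forward one forgetting map at a time, and note that each step either kills a stratum class (positive-dimensional fibers when $\deg(v_j)\ge 4$ in the intermediate tree, so the image drops dimension) or maps it birationally onto $X_{\pi_j(T)}$ (when $\deg(v_j)=3$), so the surviving terms are indexed precisely by the trees in $R(n;\mathbf{r})$. Up to that point the argument is correct and identical in substance to the paper's.

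The final paragraph, however, is a genuine misstep: you assert that $T\mapsto \pi_{n+1,\ldots,n+m}(T)$ is injective on $R(n;\mathbf{r})$ ``so that no multiplicities arise,'' and you make the conclusion conditional on this. No injectivity is needed --- the right-hand side of the theorem is a sum indexed by $T\in R(n;\mathbf{r})$, so if several $T$ share the same image stratum, that class simply occurs with the corresponding multiplicity, and linearity of $(\pi_{n+1,\ldots,n+m})_*$ already yields the stated identity. Worse, the claimed injectivity is false, even in your base case $m=1$: the paper explicitly remarks that these formulas are not multiplicity-free, and the computation of $\kappa_1$ on $\Mbar_{0,\{a,b,c,1,2\}}$ gives $2\cdot D(ab|c12)+D(abc|12)+D(ab1|c2)+D(ab2|c1)$, because the two trees $(ab){-}(c1){-}(23)$ and $(ab){-}(c2){-}(13)$ in $K(2;1)\subseteq\Slide^\psi(0,0,2)$ both push forward to $D(ab|c12)$ (this is exactly the source of the factor $\deg(v_{n+1})-3$ in Corollary \ref{cor:multiplicity}). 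Theorem \ref{thm:kappa} does not assert injectivity either, so the proposed induction cannot get off the ground. The repair is simply to delete that paragraph: the step-by-step case analysis together with linearity of proper pushforward completes the proof, with repeated image strata allowed.
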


Note that this sum is not, and likely cannot be, multiplicity-free (see Corollary \ref{cor:multiplicity} and Problem \ref{prob:kappa}).

\subsection{Multidegrees and application to tournaments} \label{sec:multidegrees}

When $\sum k_i = n$, the integers $\deg(\psi^\mathbf{k})$ and $\deg(\omega^\mathbf{k})$ are also called the \emph{multidegrees} of the maps $\Psi_n$ and $\Omega_n$, written $\deg_{\mathbf{k}}(\Psi_n)$ and $\deg_{\mathbf{k}}(\Omega_n)$.
They are the numbers of intersection points of the image of $\Mbar_{0,n+3}$ with $n$ general hyperplanes from the products of projective spaces \eqref{eq:psi-n} and \eqref{eq:omega-n}, taking $k_i$ hyperplanes from the $i$-th factor, for each $i$. Thus, a key special case of Corollary \ref{cor:main} is the following enumerative statement.

\begin{corollary}\label{cor:multidegrees}
If $k_1+\cdots+k_n=n$, we have 
\begin{align}
    \deg_{\mathbf{k}}(\Psi_n)&=\int_{\Mbar_{0,n+3}} \psi^{\mathbf{k}}=|\Slide^{\psi}(\mathbf{k})| \label{eq:psi-multidegree} \\
    \deg_{\mathbf{k}}(\Omega_n)&=\int_{\Mbar_{0,n+3}} \omega^{\mathbf{k}}=|\Slide^{\omega}(\mathbf{k})|. \label{eq:omega-multidegree}
\end{align}
\end{corollary}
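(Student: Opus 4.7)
The plan is to derive the corollary directly from Corollary \ref{cor:main} together with a dimension count. By hypothesis $\sum k_i = n = \dim \Mbar_{0,n+3}$, so both $\psi^{\mathbf{k}}$ and $\omega^{\mathbf{k}}$ are zero-dimensional cycle classes. First I would observe that by Corollary \ref{cor:main}, each such product is expressed as a sum $\sum_T [X_T]$ of classes of boundary strata. Since the sum is a zero-cycle and each $[X_T]$ is an effective class supported on an irreducible stratum, each $X_T$ appearing in the sum must itself be zero-dimensional, i.e.\ a single boundary point (the dual tree is necessarily trivalent). For any such point $X_T$, the integral $\int_{\Mbar_{0,n+3}} [X_T]$ equals $1$; summing over $T$ yields the second equality in each line:
\[
\int_{\Mbar_{0,n+3}} \psi^{\mathbf{k}} = |\Slide^\psi(\mathbf{k})|, \qquad
\int_{\Mbar_{0,n+3}} \omega^{\mathbf{k}} = |\Slide^\omega(\mathbf{k})|.
\]

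For the first equalities, I would invoke the definitions of the multidegrees recalled just above the corollary. The hyperplane class on the $i$-th factor of $\mathbb{P}^n \times \cdots \times \mathbb{P}^n$ pulls back along $|\psi_i|$ to $\psi_i$, and analogously the hyperplane class on $\mathbb{P}^i$ pulls back along $|\omega_i|$ to $\omega_i$. Hence picking $k_i$ general hyperplanes from the $i$-th factor and intersecting with the image yields, via the projection formula, the integral $\int_{\Mbar_{0,n+3}} \psi^{\mathbf{k}}$ (respectively $\int \omega^{\mathbf{k}}$). By the definition of multidegree, this integral equals $\deg_\mathbf{k}(\Psi_n)$ (respectively $\deg_\mathbf{k}(\Omega_n)$), giving the first equality in each line.

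There is no substantive obstacle: the corollary is essentially a packaging of Corollary \ref{cor:main} in the top-degree case. The only minor point to verify is that each $X_T$ in $\Slide^\psi(\mathbf{k})$ or $\Slide^\omega(\mathbf{k})$ is indeed a point when $\sum k_i = n$ and that each contributes with multiplicity one, both of which follow immediately from the generic reducedness and irreducibility statements already contained in Theorem \ref{thm:main}.
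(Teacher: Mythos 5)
Your proposal is correct and matches the paper's intent exactly: the paper presents Corollary \ref{cor:multidegrees} as an immediate special case of Corollary \ref{cor:main} (with the first equalities holding essentially by the definition of multidegree recalled just before the statement), which is precisely how you argue. The only remark is that the zero-dimensionality of each $X_T$ can be seen even more directly from the slide rule itself, since each slide drops the extra valency by one, so when $\sum k_i = n$ all trees in the slide sets are trivalent and hence index boundary points.
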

It is well known that $\deg_{\mathbf{k}}(\Psi_n)$ is given by the multinomial coefficient $\binom{n}{k_1,\ldots,k_n}$ (see e.g. \cite{cavalieri2016}), so \eqref{eq:psi-multidegree} shows that this is the number of trivalent trees in $\Slide^{\psi}(\mathbf{k})$. The integers 
$$\multichoose{n}{k_1,\ldots,k_n}:=\deg_{\mathbf{k}}(\Omega_n)$$
are called the \textit{asymmetric multinomial coefficients}. A recursive formula for them was previously given in \cite{CGM}, as well as a combinatorial interpretation via parking functions.  In \cite{GGL-tournaments}, it was also shown that a different set of boundary points called $\Tour(\mathbf{k})$ also enumerates the multidegrees $\deg_{\mathbf{k}}(\Omega_n)$.  These points are defined combinatorially via an algorithm called a \textit{lazy tournament}, and we will recall the definition in Section \ref{sec:tournament-hyperplanes} below.  

The recursions underlying these prior enumerative results --- the \emph{string equation} for $\psi^\mathbf{k}$ and the \emph{asymmetric string equation} for $\omega^\mathbf{k}$ --- relate them via forgetting maps to multidegrees with one fewer marked point. The slide rule introduced in this paper, by contrast, builds up $\psi^\mathbf{k}$ and $\omega^\mathbf{k}$ from products with one fewer factor (i.e. positive-dimensional cycle classes), but the same number of marked points. These recursions seem to be entirely different, and we do not know a combinatorial analog of the (ordinary or asymmetric) string equation for the sets $\Slide^\psi(\mathbf{k})$ or $\Slide^\omega(\mathbf{k})$; it would be interesting to find one.

Along these lines, we ask whether the tournament points $\Tour(\bf k)$ may similarly be realized as limiting intersections with hyperplanes.  Our main result in this direction is that it is possible for the following families of tuples $\mathbf{k}$.

\begin{thm}\label{thm:families}
 
 Suppose the tuple $\mathbf{k}=(k_1,\ldots,k_n)$ is of one of the following forms:
 \begin{itemize}
     \item $(0,0,\ldots,0,0,n)$,
     \item $(0,0,\ldots,0,1,n-1)$,
     \item $(0,0,\ldots,0,n-1,1)$, or
     \item $(0,0,2,2)$.
 \end{itemize}
Then there exists an explicitly constructed set of hyperplanes in $\PP^1\times \cdots \times \PP^n$, with $k_i$ of them from $\PP^i$ for each $i$, such that their intersection locus $V^{\mathrm{tour}}(\mathbf{k},\vec{t})$ in $\Mbar_{0,n+3}$, pulled back under $\Omega_n$, satisfies 
\begin{equation} \label{eq:limit-tour}
\lim_{\vec{t}\to \vec{0}} V^{\mathrm{tour}}(\mathbf{k};\vec{t})=\Tour(\mathbf{k}).
\end{equation}
Moreover, given any set of hyperplanes satisfying \eqref{eq:limit-tour} for $\mathbf{k} = (k_1,\ldots,k_n)$, there exists such a set for $(k_1,\ldots,k_{n-1},0,k_{n}+1)$.
\end{thm}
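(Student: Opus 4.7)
The plan is to handle the four base cases by direct construction, then prove the inductive propagation step; together these cover all tuples in the stated families. The two parts are largely independent.

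For the base cases, I would first compute $\Tour(\mathbf{k})$ explicitly for each family using the lazy tournament algorithm recalled in Section \ref{sec:tournament-hyperplanes}. For $\mathbf{k}=(0,\ldots,0,n)$, we have $\omega_n = \psi_n$ and $\omega_n^n$ is a single point, so $\Tour(\mathbf{k})$ consists of one point; the $n$ required hyperplanes in $\PP^n$ can be constructed as perturbations of $n$ coordinate hyperplanes, chosen in the order dictated by the lazy tournament. The cases $(0,\ldots,0,1,n-1)$ and $(0,\ldots,0,n-1,1)$ each introduce one extra hyperplane either in $\PP^{n-1}$ or in $\PP^n$, which creates a combinatorial branching in the degeneration corresponding to the first binary choice of the lazy tournament; I would construct the hyperplanes so that this branching matches, verifying the correspondence stratum by stratum. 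The sporadic case $(0,0,2,2)$ with $n=4$ I would handle by direct computation on the embedding $\Mbar_{0,7} \hookrightarrow \PP^1 \times \PP^2 \times \PP^3 \times \PP^4$, using two hyperplanes from $\PP^3$ and two from $\PP^4$, and checking the limit against the explicit enumeration of $\Tour((0,0,2,2))$.

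For the inductive step, suppose hyperplanes $H_{i,j}$ on $\Mbar_{0,n+3}$ have iterated flat limit equal to $\Tour(\mathbf{k})$. The new tuple $(k_1,\ldots,k_{n-1},0,k_n+1)$ retains $k_i$ hyperplanes from $\PP^i$ for $i<n$, has zero from $\PP^n$, and has $k_n+1$ hyperplanes from $\PP^{n+1}$. I would construct the new hyperplanes as follows: for $i<n$, keep each $H_{i,j}$, which naturally pulls back via $|\omega_i|: \Mbar_{0,n+4} \to \PP^i$; for each of the $k_n$ old hyperplanes $h(w_b,w_c,w_1,\ldots,w_{n-1}) = 0$ in $\PP^n$, lift it to the hyperplane $h(w_b,w_c,w_1,\ldots,w_{n-1}) + t^N w_n = 0$ in $\PP^{n+1}$, with $N$ chosen large enough that the $w_n$ term vanishes first in the iterated limit; and finally, append one new moving hyperplane in $\PP^{n+1}$ of the form $\hyp_{n+1}^{\omega}(t)$ (or a closely related equation) chosen to place the marked point $n+1$ at the location predicted by the lazy tournament on the extended tuple.

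The main obstacle will be verifying that the iterated limit of the modified intersection equals $\Tour(k_1,\ldots,k_{n-1},0,k_n+1)$ set-theoretically. I would analyze the limit stratum by stratum inside $\pi^{-1}(\Tour(\mathbf{k})) \subset \Mbar_{0,n+4}$, using local calculations in the spirit of Lemma \ref{thm:strata-psi} to determine which points survive the final hyperplane section. The precise form of the appended moving hyperplane must be reverse-engineered from the attachment rule of the lazy tournament on the extended tuple: matching the attachment of point $n+1$ in each tournament tree to the leading-order term of the equation. A secondary technical concern is the ordering of the iterated limit and transversality: the new parameters must be sent to zero after the $t^N$ perturbations so that $w_n$ has indeed vanished first, and one-parameter local calculations as in the proof of Theorem \ref{thm:main} then ensure that no spurious components or multiplicities appear.
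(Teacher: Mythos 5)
Your overall architecture matches the paper's (four base-case constructions plus an inductive lemma for appending $(\ldots,0,k_n+1)$), but as written there is a genuine gap: for the families $(0,\ldots,0,1,n-1)$ and $(0,\ldots,0,n-1,1)$ you never actually produce the hyperplanes, and the theorem's content is precisely an \emph{explicit} construction. Saying you would ``construct the hyperplanes so that this branching matches, verifying stratum by stratum'' defers the only hard part. In the paper these cases require non-standard, reordered slide equations (in the sense of Remark \ref{rmk:general-hyperplanes}), e.g.\ for $(0,\ldots,0,1,n-1)$ the equations $y_b=0$, $z_b=t_2z_{n-1}$, $z_c=t_3z_1$, $z_1=t_4z_2,\ldots,z_{n-3}=t_nz_{n-2}$, where the binomial equation $z_b=t_2z_{n-1}$ deliberately treats $n-1$ as the ``minimal'' label so that the branch containing $n-1$ slides toward $a$ first; the choice of which variables appear and in what order is exactly what makes the limit land on $\Tour(\mathbf{k})$ rather than on a slide set, and no generic perturbation of coordinate hyperplanes will do this automatically.

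The same issue affects your inductive step, where the key design choice is left to be ``reverse-engineered.'' The paper's construction is specific: the last $k_n$ equations are transplanted from the $z$-coordinates of $\PP^n$ to the $w$-coordinates of $\PP^{n+1}$ \emph{without any $w_n$ term}, and the single added equation is the truncated moving hyperplane $w_b+t_{n+1}w_c+t_{n+1}^2w_1+\cdots+t_{n+1}^nw_{n-1}=0$ (again omitting $w_n$). Omitting $w_n$ is what makes all slides at $n+1$ ignore the position of $n$, so that $n$ must remain attached to $v_{n+1}$ throughout and the limit points are exactly the trees obtained from $\Tour(\mathbf{k})$ by inserting $n+1$ paired with $n$ --- matching the bijection $\Tour(\mathbf{k})\leftrightarrow\Tour(k_1,\ldots,k_{n-1},0,k_n+1)$. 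Your variant, lifting each old equation to $h+t^Nw_n=0$ and appending $\hyp_{n+1}^\omega(t)$ itself (which contains $t^{n+1}w_n$), introduces exactly the terms whose absence drives the paper's argument; it may still work if one shows that on every relevant stratum the $w_n$ term is never the leading term (equivalently, that $n$ is never the minimal label off the $a$-branch), but that verification is absent, and in configurations where all smaller labels lie on the $a$-branch from $n+1$'s perspective the $w_n$ term would become leading and detach $n$ from $n+1$, producing points outside $\Tour$. So the proposal is a plausible plan with the right skeleton, but the explicit equations and the limit analyses that constitute the proof are missing.
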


\subsection{Outline of paper}

The paper is organized as follows.  We provide necessary background and notation in Section \ref{sec:background}.  In Section \ref{sec:slide-rules} we define the slide rules and give some combinatorial properties of the resulting trees.  In Section \ref{sec:limiting-hyperplanes} we prove the main theorems on degenerations, namely Theorems \ref{thm:strata-psi} and \ref{thm:main} and Corollary \ref{cor:main}, and we also prove Theorem \ref{thm:kappa}.  In Section \ref{sec:tournament-hyperplanes} we prove Theorem \ref{thm:families}, and we conclude with some further combinatorial and geometric observations in Section \ref{sec:conclusion}, including an interesting pattern avoidance condition that arises in the trees $\Slide^{\omega}(1,1,1,\ldots,1)$.

\subsection{Acknowledgments}

We thank Vance Blankers, Renzo Cavalieri, and Mark Shoemaker for several helpful discussions pertaining to this work. 

\section{Background}\label{sec:background}

We now provide some geometric and combinatorial background needed to state and prove our results.

\subsection{Structure of \texorpdfstring{$\Mbar_{0,S}$}{} and trivalent trees}\label{sec:background-trees}

Throughout, we let $S = \{a,b,c,1,2,\ldots, n\}$. A point of $\Mbar_{0,S}$ consists of an (isomorphism class of a) genus $0$ curve $C$ with at most nodal singularities and marked points labeled by the elements of $S$, such that each irreducible component has at least three \textbf{special points}, defined as marked points or nodes.  In this paper, we draw the irreducible $\PP^1$ components as circles, as in Figure \ref{fig:strata}.  The \textbf{dual tree} of a point in $\Mbar_{0,S}$ is the leaf-labeled tree formed by drawing a vertex in the center of each $\PP^1$ circle and then connecting this vertex to each marked point on its circle and each vertex on an adjacent circle connected by a node.  The dual tree is guaranteed to be a tree since the curve has genus $0$.

\begin{figure}
    \centering
    \includegraphics{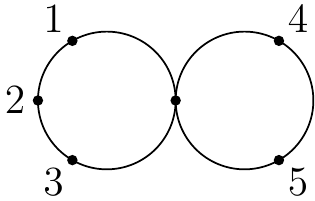}\hspace{1cm} \includegraphics{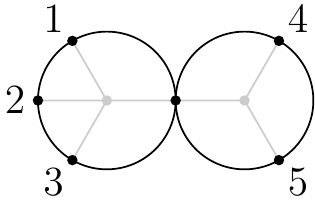}\hspace{1cm} \includegraphics{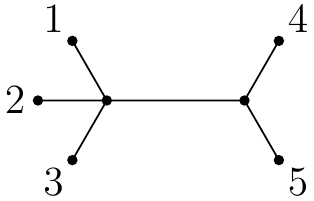}
    \caption{At left, a stable curve in $\Mbar_{0,5}$, in which each circle represents a copy of $\PP^1$.  At center, we form the dual tree of the curve shown at right.  The tree also represents the dimension-$1$ boundary stratum consisting of the closure of the set of all stable curves in which $1,2,3$ are on one component and $4,5$ are on another.}
    \label{fig:strata}
\end{figure}

A tree is \textbf{trivalent} if every vertex has degree $1$ or $3$ and at least one vertex of degree $3$, and it is \textbf{at least trivalent} or \textbf{stable} if it has no vertices of degree $2$ and at least one vertex of degree $\ge 3$.  The dual tree of any stable genus $0$ curve is a stable tree.   We define the \defn{extra valency} of a stable tree $T$ with set of internal vertices $V$ to be $\sum_{v\in V} (\deg(v)-3)$.

The \textbf{interior} of $\Mbar_{0,S}$ is the open set $M_{0,S}\subset \Mbar_{0,S}$ consisting of all the curves that have a single $\PP^1$ with all distinct marked points. The points of the interior correspond to those whose dual tree consists of a central node with $|S|$ leaves attached. 

The \textbf{boundary} of $\Mbar_{0,S}$ is the complement of the interior, consisting of the points corresponding to stable curves with more than one irreducible component. Given a set partition $S = A \sqcup B$ with $|A|, |B| \geq 2$, the \textbf{boundary divisor} $D(A|B)$ is the closure of the set of stable curves $C$ with two components, such that the marked points in $A\subset S$ are on one component and the marked points in $B\subset S$ are on another. The boundary of $\Mbar_{0,S}$ is the union of the divisors $D(A|B)$ for all choices of $A$ and $B$. Sometimes we abuse notation and write $D(A|B)$ for the associated class in the Chow ring.

Let $T$ be an at-least-trivalent tree whose leaves are labeled by $S$.  Then the \textbf{boundary stratum} $X_T$ corresponding to $T$ is the closure of the set of all stable curves whose dual tree is $T$. Let $V$ be the set of non-leaf vertices of $T$, and for each $v \in V$, let $N(v)$ be the set of vertices adjacent to $v$. The dimension of $X_T$ is the extra valency of $T$. More specifically, there is a canonical isomorphism
\begin{equation} \label{eq:stratum-factors}
X_T \cong \prod_{v \in V} \Mbar_{0,N(v)} = \prod_{v \in V} \Mbar_{0, \deg(v)},
\end{equation}
called the {\bf clutching} or {\bf gluing map}. The boundary strata $X_T$ form a quasi-affine stratification (as defined in \cite{3264}) of $\Mbar_{0,n}$, and the zero-dimensional boundary strata, or \textbf{boundary points}, correspond bijectively to the trivalent trees on leaf set $S$.  Indeed, since the points are isomorphism classes of stable curves and an automorphism of $\PP^1$ is determined by where it sends three points, a stable curve whose dual tree is trivalent represents the only element of its isomorphism class.

Keel has given a presentation of the Chow ring $A^\bullet(\Mbar_{0,n+3})$ that shows that the classes $[X_T]$ generate it as a $\mathbb{Z}$-algebra \cite{keel1992}. The relations among the $[X_T]$'s are all obtained from the basic \emph{WDVV relations} by pullback and pushforward along forgetting maps and clutching maps.

\begin{remark} \label{rmk:multiplicity}
If two sums of boundary classes $[X_T]$ are rationally equivalent, then both sums consist of the same total number of strata (counting multiplicities). This follows from Keel's presentation (and the easy fact that it holds for the WDVV relations).
\end{remark}

\subsection{Kapranov morphisms}\label{sec:Kapranov}

For all facts stated throughout the next two subsections (\ref{sec:Kapranov} and \ref{sec:background-embedding}), we refer the reader to Kapranov's paper \cite{Ka1}, in which the Kapranov morphism below was originally defined.

The \textbf{$i$th cotangent line bundle} $\mathbb{L}_i$ on $\Mbar_{0,S}$ is the line bundle whose fiber over a curve $C\in \Mbar_{0,S}$ is the cotangent space of $C$ at the marked point $i$. The $i$-th \textit{$\psi$ class} is the first Chern class of this line bundle, written $\psi_i=c_1(\mathbb{L}_i)$. The corresponding map to projective space \[|\psi_i| : \Mbar_{0,S} \to \mathbb{P}^n,\]
is called the Kapranov morphism.

We coordinatize this map as follows. It is known that $|\psi_i|$ contracts each of the $n+2$ divisors $D(\{i,j\}|\{i,j\}^c)$, for $j \ne i$, to a point $\bdpt_j \in \mathbb{P}^n$. These points are, moreover, in general linear position.  We choose coordinates so that $\bdpt_b, \bdpt_c, \bdpt_1, \ldots,\hat{\bdpt_i},\ldots, \bdpt_{n} \in \mathbb{P}^n$ are the standard coordinate points $[1:\cdots : 0], \ldots, [0:\cdots:1]$ and $\bdpt_a$ is the barycenter $[1 : 1 : \cdots : 1]$. We name the projective coordinates $[z_b : z_c : z_1 : \cdots:\hat{z_i}:\cdots : z_{n}]$.  (The notation $\hat{z_i}$ means we omit that term from the sequence.) The hyperplane $z_j = 0$ pulls back to the union of divisors $\bigcup D(i\star|aj\star^{c})$, where $\star$ ranges over the nonempty subsets of $S \setminus \{a,i,j\}$.
   
Given a curve $C$ in the interior $M_{0,S}$, by abuse of notation we also write $p_a,p_b,p_c,p_1,\dots,p_n$ for the coordinates of the $n+3$ marked points on the unique component of $C$, after choosing an isomorphism $C\cong \bP^1$. With these coordinates, the restriction of $|\psi_i|$ to the interior $M_{0,S}$ is given by
\begin{equation} \label{eq:kap-interior}
    |\psi_i|(C) = \bigg[
    \frac{p_a - p_b}{p_i - p_b} : 
    \frac{p_a - p_c}{p_i - p_c} : 
    \frac{p_a - p_1}{p_i - p_1} : 
    \cdots :
    \frac{p_a - p_{n}}{p_i - p_{n}}
    \bigg]
\end{equation}
where we omit the (undefined) term $\frac{p_a-p_i}{p_i-p_i}$.  It is convenient to choose coordinates on $C$ in which $p_a = 0$ and $p_i = \infty$, in which case the map simplifies to
\begin{equation} \label{eq:kap-interior-2}
    |\psi_i|(C) = [p_b : p_c : p_1 : \cdots :\hat{p_i}:\cdots : p_{n}].
\end{equation}

We now describe how to use the above formulas to compute $|\psi_i|$ on boundary strata, i.e. reducible stable curves $C$. Essentially, $|\psi_i|$ reduces to a smaller Kapranov morphism using the irreducible component of $C$ containing $p_i$ (followed by a linear map into $\mathbb{P}^n$).

\begin{definition}[Branches at $i$]\label{def:branches}
Let $C$ be a stable curve with dual tree $T$.  Let $v_i \in T$ be the internal vertex adjacent to leaf edge $i$.  We refer to the connected components of $T \setminus \{v_i\}$ (defined by vertex deletion) as the \defn{branches of $T$ at $i$}.  The \textbf{root} of a branch is the vertex attached to $v_i$ by an edge.  We write $\sigma(C)$ to denote the set partition of $S\setminus i$ given by the equivalence relation of being on the same branch.
\end{definition}

\begin{example}\label{ex:branches}
The stable curve $C$ below at left has the dual tree shown at center, with its disconnected branches at $i=4$ shown at right.  
\begin{center}
    \includegraphics{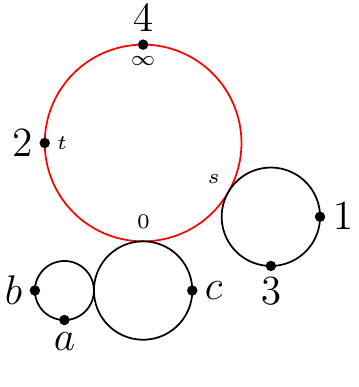}\hspace{2cm}\includegraphics{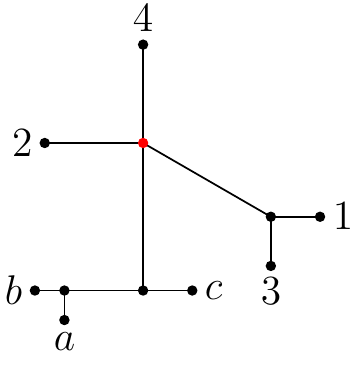}\hspace{2cm}\includegraphics{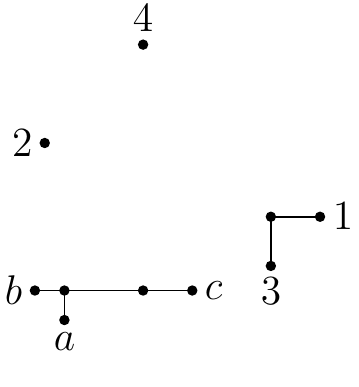}
\end{center}
By examining the branches, we find the set partition for $i=4$ is $\sigma=\{\{a,b,c\},\{2\},\{1,3\}\}$.
\end{example}

\begin{definition}\label{def:P-sigma}
Let $\partition$ be a partition of $S\setminus i$.  

Define $P^\circ_{\partition} \subset \mathbb{P}^n$ to be the set of points such that:
\begin{itemize}
    \item $z_r = z_s$ if and only if $r, s$ are in the same part of $\partition$, and 
    \item $z_r = 0$ if and only if $r,a$ are in the same part of $\sigma$.
\end{itemize} Let $P_\partition = \overline{P^\circ_\partition}$ be its closure.  It is convenient to parametrize $P_\partition$ as follows: we choose an ordering $\sigma_0,\ldots,\sigma_k$ of the parts of $\sigma$ with $a\in \sigma_0$, and for $r\in S\setminus i$ we define $\sigma(r)$ to be the index $j$ such that $r\in \sigma_{j}$. We then have the linear map
\begin{align*}
    \iota_\partition : \mathbb{P}^{k-2} &\to P_\partition \subset \mathbb{P}^n, \\
    [y_1 : \cdots : y_{k-1}] &\mapsto [y_{\sigma(b)} : y_{\sigma(c)} : y_{\sigma(1)} : \cdots:\hat{y}_{\sigma(i)}:\cdots : y_{\sigma(n)}],
\end{align*}
where $y_0$ is defined to be $0$ (that is, 
if $r\in \sigma_0$ then $z_r=0$).
\end{definition} 

\begin{example}\label{ex:minus-2}
  Let $\partition=\{\{a,b,c\},\{1,3\},\{4\}\}$, a set partition of $S\setminus 2$ for $n=4$.  Then a point of $P_\partition \subset \PP^4$ has the form $$[0:0:y_1:y_1:y_2]$$ for $y_1$ and $y_2$ not both zero.
\end{example}

\begin{prop} \label{lem:kap-arbitrary-tree}
Let $C \in \Mbar_{0,S}$ be a stable curve with dual tree $T$, and let $\sigma=\sigma(C)$ be the set partition given by the branches of $T$ at $i$. Let $C' \subseteq C$ be the irreducible component containing $p_i$, with special points $Y$. We may think of $C'$ as an interior point of the smaller moduli space $\Mbar_{0,Y}$, and compute $|\psi_i|(C')$ accordingly by \eqref{eq:kap-interior}. 
Then we have
\[|\psi_i|(C) = \iota_\partition \circ |\psi_i|(C').\]
In other words, the coordinates of \eqref{eq:kap-interior} are copied into the coordinates $\mathbb{P}^n$ according to the set partition $\partition$.
\end{prop}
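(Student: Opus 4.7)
My plan is to reduce the statement to the interior case, where formula \eqref{eq:kap-interior-2} applies directly, by smoothing $C$ to a one-parameter family of interior curves and using continuity of $|\psi_i|$. Since $\Mbar_{0,S}$ is smooth and $M_{0,S}$ is dense, I would choose a family $\{C_t\}_{t \in \Delta}$ over a disk with $C_0 = C$ and $C_t \in M_{0,S}$ for $t \ne 0$; such a family exists by standard local deformation theory for nodal curves. By continuity of the morphism $|\psi_i|$, we have $|\psi_i|(C) = \lim_{t \to 0} |\psi_i|(C_t)$.

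For $t \ne 0$, normalize the isomorphism $C_t \cong \mathbb{P}^1$ so that $p_a(t) = 0$ and $p_i(t) = \infty$, and write $p_r(t)$ for the coordinate of the $r$-th marked point. Formula \eqref{eq:kap-interior-2} gives
\[
|\psi_i|(C_t) = [p_b(t) : p_c(t) : p_1(t) : \cdots : \widehat{p_i(t)} : \cdots : p_n(t)].
\]
As $t \to 0$, the marked points $\{p_r(t) : r \in \sigma_j\}$ all converge to the node $q_j \in C'$ adjacent to branch $\sigma_j$, since the component on which they sit collapses to that node in $C$. For the branch $\sigma_0 \ni a$ we have $q_0 = p_a(0) = 0$, so the limit is
\[
|\psi_i|(C) = [q_{\sigma(b)} : q_{\sigma(c)} : q_{\sigma(1)} : \cdots : \widehat{q_{\sigma(i)}} : \cdots : q_{\sigma(n)}],
\]
with the convention $q_0 = 0$.

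Next, view $C'$ as an interior point of $\Mbar_{0,Y}$ with special points $Y = \{q_0, q_1, \ldots, q_{k-1}, p_i\}$. In the already chosen coordinate system, $q_0 = 0$ plays the role of the $a$-marked point and $p_i = \infty$, so formula \eqref{eq:kap-interior-2} applied to $C'$ yields $|\psi_i|(C') = [q_1 : \cdots : q_{k-1}]$ with coordinates indexed by the branches $\sigma_1, \ldots, \sigma_{k-1}$. Unwinding the definition of $\iota_\sigma$, this directly matches the formula above for $|\psi_i|(C)$, giving $|\psi_i|(C) = \iota_\sigma \circ |\psi_i|(C')$ as claimed.

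The main obstacle is setting up the smoothing family with a normalization of the isomorphisms $C_t \cong \mathbb{P}^1$ compatible with the limit $p_r(t) \to q_{\sigma(r)}$ for $r \notin \{i, a\}$. This is a standard fact from local deformation theory --- each node of $C$ may be independently smoothed by one parameter, and the marked points on a collapsing branch converge to the adjacent node --- but it is the only nontrivial geometric input; once granted, the matching with $\iota_\sigma \circ |\psi_i|(C')$ reduces to bookkeeping in coordinates.
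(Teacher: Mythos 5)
Your approach is sound, but note that the paper does not actually prove this proposition: it is stated as background in Section \ref{sec:Kapranov}, with all facts in that subsection referred to Kapranov's paper. So there is no internal proof to compare against; what you have written is a self-contained degeneration argument, and in outline it is correct: smooth $C$ inside $\Mbar_{0,S}$, use that $|\psi_i|$ is a morphism (hence the limit of $|\psi_i|(C_t)$ is $|\psi_i|(C)$), compute on the interior via \eqref{eq:kap-interior-2}, and identify the limit with $\iota_\partition\circ|\psi_i|(C')$, where the node $q_0$ adjacent to the $a$-branch plays the role of $a$ on $C'$ (this matches the convention $y_0=0$ in Definition \ref{def:P-sigma} and Example \ref{ex:branches}).

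The one step you should tighten is the normalization. Fixing only $p_a(t)=0$ and $p_i(t)=\infty$ leaves a $t$-dependent $\mathbb{C}^*$ scaling ambiguity, and the assertion ``marked points on a collapsing branch converge to the adjacent node of $C'$'' is \emph{not} true for an arbitrary choice of this scaling: for a chain of three components one can scale so that the parametrizations converge to a middle component instead, and then the coordinates of marked points on branches beyond it run off to $\infty$ (the projective point $[p_b(t):\cdots:p_n(t)]$ is of course unchanged, being invariant under a common scalar, but your coordinatewise limit computation breaks down). The fix is to pin the scale by a third special point of $C'$: choose $m\notin\sigma_0\cup\{i\}$ and normalize $p_m(t)=1$ as well. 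Then the convergence you invoke is the genuinely standard fact that the Möbius normalizations by three chosen points converge to the (unique) component of the stable limit separating those three points, which here is exactly $C'$; equivalently, each ratio $p_r(t)/p_s(t)$ is a cross-ratio of $p_a,p_i,p_r,p_s$, i.e.\ the value of a forgetful map to $\Mbar_{0,\{a,i,r,s\}}\cong\PP^1$, whose value at $C$ is read off from the stabilization and reproduces $q_{\sigma(r)}/q_{\sigma(s)}$. With that scaling, all coordinates have finite limits and not all are zero (since $\deg(v_i)\ge 3$ there is at least one branch besides $\mathrm{Br}_a$ and the leaf $i$, whose limit $q_j\ne 0,\infty$), so the limit point in $\PP^n$ is well defined and the bookkeeping against $\iota_\partition$ goes through as you describe.
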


\begin{example}
Let $C$ be the curve in Example \ref{ex:branches}, and let $C'$ be the component containing marked point $4$.  If we parameterize $C'\cong \PP^1$ such that branch $\{4\}$ is at $\infty$, branch $\{a,b,c\}$ is at $0$, and $\{2\}$ and $\{1,3\}$ are at $t$ and $s$ respectively, then
$$|\psi_4|(C)=[0:0:s:t:s].$$ 
\end{example}

\subsection{The total and iterated Kapranov maps}\label{sec:background-embedding}

We can now define the maps $\Psi_n$ and $\Omega_n$.

\begin{definition}
We define $\Psi_n:\Mbar_{0,S}\to \PP^n\times \PP^n\times \cdots \times \PP^n$ to be the product $|\psi_1|\times |\psi_2|\times \cdots \times |\psi_n|$.  That is, $$\Psi_n(C)=(|\psi_1|(C),|\psi_2|(C),\ldots,|\psi_n|(C)).$$
\end{definition}

The map $\Psi_n$ is not an embedding, since it only records the coordinates of special points on components $C' \subseteq C$ containing at least one marked point $i \geq 1$. However, $\Psi_n$ is birational onto its image (indeed even a single $|\kap_i|$ map is birational onto its image).

\begin{example}\label{ex:Psi}
 If $C$ is the curve in Example \ref{ex:branches}, we have $$\Psi_n(C)=([0:0:0:1:0],[0:0:s:s:s-t],[0:0:1:0:0],[0:0:s:t:s])$$ where the second coordinate $|\psi_2|(C)$ is obtained by combining Lemma \ref{lem:kap-arbitrary-tree} and Equation \eqref{eq:kap-interior}, using the same parameterization of the red component $C'$ for both $|\psi_2|$ and $|\psi_4|$.  Note that the coordinates in the second copy of $\PP^4$ match the format shown in Example \ref{ex:minus-2}.
\end{example}

To define $\Omega_n$, we can combine the $\psi$ and forgetting maps as follows.  The Kapranov morphism is a projective embedding of the universal curve over $\Mbar_{0,S\setminus n}$:
\[
\xymatrix{
\Mbar_{0,S}\ \ar@{^(->}[r]^-{\kapn} \ar[d]_{\pi_n} & 
\ \mathbb{P}^n \times \Mbar_{0, S\setminus n} \ar[dl] \\
\Mbar_{0,S\setminus n}.
}
\]
We may repeat this construction using the map $|\psi_{n-1}|$ on $\Mbar_{0, S\setminus n}$, and so on, obtaining a sequence of embeddings. This gives the {\bf iterated Kapranov morphism}
\[\emb_n: \Mbar_{0,S} \hookrightarrow \mathbb{P}^1 \times \mathbb{P}^2 \times \cdots \times \mathbb{P}^n.\]
Keel and Tevelev \cite{KeT} first observed that $\Omega_n$ is in fact a closed embedding.
The $i$-th factor of this embedding is given by forgetting the points $p_{i+1}, \ldots, p_n$, then applying the Kapranov morphism $|\psi_i|$ on the smaller moduli space.  Since the $\omega$ classes are defined as the pullbacks of $\psi$ classes under the forgetting maps, we may alternatively define $$\Omega_n=|\omega_1|\times \cdots \times |\omega_n|.$$

\begin{example}
  If $C$ is the curve in Example \ref{ex:branches}, we have $$\Omega_n(C)=([0:1],[0:0:1],[0:0:1:0],[0:0:s:t:s]).$$
\end{example}

\begin{remark}
Example \ref{ex:Psi} demonstrates that $\Psi_n$ is not an embedding.  Indeed, if we replace the $\{a,b,c\}$ branch of the curve with any other arrangement of $a,b,c$ with respect to each other, the resulting curve will have the same coordinates under $\Psi_n$.   On the other hand, since $\Omega_n$'s coordinates are computed after applying forgetting maps at each step, there will exist a step where a numbered marked point will ``see'' the structure of such an ambiguous branch.  Hence $\Omega_n$ is injective.
\end{remark}

\section{Slide rules}\label{sec:slide-rules}

In this section, we define the slide rules for $\psi$ and $\omega$. We first state each rule as a generative procedure for generating a list of trees. We also describe the resulting sets of trees directly in terms of edge labelings. We prove in Section~\ref{sec:limiting-hyperplanes} that the trees (strata) given by these rules compute the products $\omega^{\mathbf{k}}$ and $\psi^\mathbf{k}$.

Let $T$ be a stable (at-least trivalent) tree with leaves labeled $a < b < c < 1 < \cdots < n$.

\begin{definition}
Fix $1 \leq i \leq n$ and let $v_i \in T$ be the internal vertex adjacent to $i$. We write $\mathrm{Br}_a$ be the branch at $i$ containing $a$. We write $e_a$ for the edge connecting $\mathrm{Br}_a$ to $v_i$. 
\end{definition}

\begin{definition}
With $i$ as above, let $m$ be the minimal leaf label of $T\setminus (\mathrm{Br}_a\cup \{i\})$; we call $m$ the \defn{$i$-minimal marked point}.  We write $\mathrm{Br}_m$ to denote the branch at $i$ containing $m$.
\end{definition}

\begin{definition}[Slide at $i$]\label{def:slide-rule}
An $i$-{\bf slide} on $T$ is performed as follows: with the notation above, we add a vertex $\overline{v}$ in the middle of edge $e_a$,  move $\mathrm{Br}_m$ to attach its root to $\overline{v}$, and attach each remaining branch of $T$ at $i$ (other than $\mathrm{Br}_a$) to either $v_i$ or $\overline{v}$. 

We write $\slide_i(T)$ for the set of stable trees obtained this way. Note that stability requires at least one branch to remain at $v_i$. In particular, $\slide_i(T)$ is empty if $\deg(v_i) = 3$.
\end{definition}

\begin{remark}\label{rmk:slide-contraction}
It is straightforward to check that $\slide_i(T)$ can alternatively be defined as the set of all trees $T'$ for which:
\begin{itemize}
    \item Contracting a single edge $e$ in $T'$ results in $T$ (in the above notation, the edge connecting $\overline{v}$ and $v_i$), and
    \item The leaves $a$ and $m$ are on the same branch with respect to $i$ in $T'$.
\end{itemize}
\end{remark}

\begin{example}
 As an example of a $3$-slide, let $T$ be the following tree, along with the new vertex $v_m$ to be added to edge $e_a$ as shown below.  We also indicate the vertex $v_i=v_3$ with a dot.
 \[\includegraphics{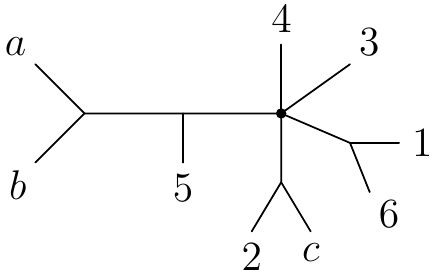}\]
 Then $\mathrm{Br}_a$ is the subtree having leaves $a,b,5$.  The other branches at $3$ have sets of leaves $\{4\}$, $\{1,6\}$, and $\{c,2\}$, and since the latter has the smallest minimal element ($m=c$) among these branches, $\mathrm{Br}_m$ is the branch containing $c$ and $2$. Performing the $3$-slide gives us the set of three trees:
\begin{center}
    \includegraphics{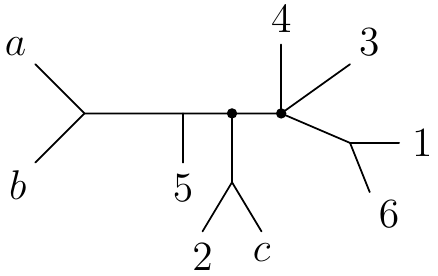}\hspace{0.5cm}\includegraphics{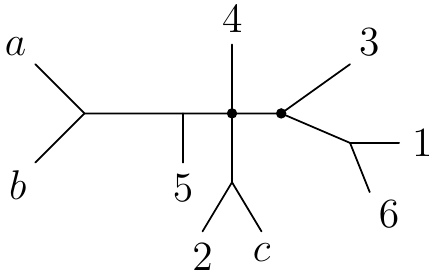}\hspace{0.5cm}\includegraphics{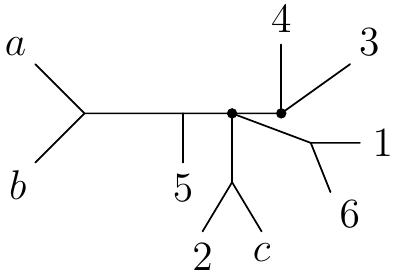}
\end{center}
\end{example}

\begin{remark}
In general, there are $2^{\deg(v_i)-3}-1$ elements in $\slide_i(T)$.  Indeed, each branch other than:
\begin{itemize}
    \item branch $\mathrm{Br}_a$,
    \item the leaf $i$, and
    \item branch $\mathrm{Br}_m$,
\end{itemize}  
has the choice of either being attached to $v_i$ or $\overline{v}$, with the exception that they cannot all be attached to $\overline{v}$.
\end{remark}

The following lemma about $i$-slides, while straightforward, is essential to the generic reducedness result.

\begin{lemma}[Injectivity] \label{lem:distinct-insertions}
Let $T, T'$ be distinct stable trees on leaf set $S$.  Then the sets $\slide_{i}(T)$ and $\slide_{i}(T')$ are disjoint.
\end{lemma}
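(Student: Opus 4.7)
The plan is to show that any $T'' \in \slide_i(T)$ uniquely determines $T$ by pointing to a canonical edge of $T''$ to contract. Combined with the characterization in Remark \ref{rmk:slide-contraction}, which already tells us that $T''$ is obtained from $T$ by inserting a single edge, this will immediately yield injectivity: if $T'' \in \slide_i(T) \cap \slide_i(T')$, both $T$ and $T'$ are the result of contracting the same canonically-chosen edge of $T''$, hence $T = T'$.

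Concretely, given $T''$ I would define two vertices intrinsic to $T''$: let $v_i$ be the unique internal vertex of $T''$ adjacent to the leaf $i$, and let $\overline{v}$ be the unique neighbor of $v_i$ lying on the path from $v_i$ to the leaf $a$. The claim is that contracting the edge $v_i\overline{v}$ of $T''$ recovers $T$. This follows by tracing through Definition \ref{def:slide-rule}: the inserted vertex is placed on the edge $e_a$ between $v_i$ and the root of $\mathrm{Br}_a$, so in $T''$ the path from $v_i$ to $a$ begins precisely with the edge $v_i\overline{v}$, and contracting this edge reattaches $\mathrm{Br}_a$, $\mathrm{Br}_m$, and any other branches that were moved to $\overline{v}$ back onto $v_i$, returning $T$.

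It remains only to verify that the edge $v_i\overline{v}$ is well-defined and genuinely contractible: that is, that $\overline{v}$ is an internal (not leaf) vertex and distinct from $v_i$. This follows from stability of the slide output, since $\overline{v}$ has degree at least three (it is adjacent to $v_i$, to the root of $\mathrm{Br}_a$, and to the root of $\mathrm{Br}_m$). The construction is cleanly reversible, so there is no substantive obstacle; the only point that warrants attention is confirming that the identification of $\overline{v}$ inside $T''$ uses nothing about $T$ beyond the fixed data $i$ and $a$, which it does not.
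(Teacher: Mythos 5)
Your argument is correct and is essentially the paper's proof: the paper likewise recovers $T$ from any $R\in\slide_i(T)$ by contracting the edge at $v_i$ leading toward the branch containing $a$, which depends only on $R$, $i$, and $a$. Your version just spells out the well-definedness of that edge a bit more explicitly.
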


\begin{proof}
Let $R \in \slide_i(T)$. Let $v_i \in R$ be the vertex where $i$ is attached. Let $e_A \in R$ be the edge adjacent to $v_i$ connecting to the branch from $v_i$ containing $a$. Contracting $e_A$ recovers $T$.
\end{proof}

We now define the general slide rules for intersections of $\psi$ and $\omega$ classes.  In both of the following we let $\mathbf{k} = (k_1, \ldots, k_n)$ be a weak composition.  We write $\abctree$ (resp. $\abcntree$) for the unique tree with a single internal vertex and leaves $a,b,c$ (resp. $a,b,c,1,\ldots,n$).

\begin{definition}[Slide rules for $\psi$]\label{def:slide-psi}
 We define $\Slide^\psi(\mathbf{k})$ as the set of all stable trees obtained as follows.
\begin{itemize}
    \item[1.] Start with $\abcntree$ as step $i=0$.
    \item[2.] For $i = 1, \ldots, n$, perform $k_i$ successive $i$-slides in all possible ways starting from the trees obtained in step $i-1$.
\end{itemize}
\end{definition}
\begin{definition}[Slide rules for $\omega$]\label{def:slide-omega}
Define $\Slide^\omega(\mathbf{k})$ as the set of all stable trees obtained as follows.
\begin{itemize}
    \item[1.] Start with $\abctree$ as step $i=0$.
    \item[2.] For $i=1, \ldots, n$:
    \begin{itemize}
        \item[a.] Consider all trees formed by inserting $i$ at any existing non-leaf vertex on a tree obtained in step $i-1$.
        \item[b.] Perform $k_i$ successive $i$-slides in all possible ways starting from the trees obtained in the previous step.
    \end{itemize}
\end{itemize}
\end{definition}

More formally, if $\mathbf{T}$ is a set of $S$-labeled stable trees, we write
\[\slide_{i}(\mathbf{T}) := \bigcup_{T \in \mathbf{T}} \slide_{i}(T).\]
By Lemma \ref{lem:distinct-insertions}, this is a disjoint union. For $k \geq 0$, we write $\slide_{i}^{(k)}(\mathbf{T}) := \slide_{i} \circ \cdots \circ \slide_{i}(\mathbf{T})$
for the result of applying $k$ successive slides to the elements of $\mathbf{T}$ (in all possible ways).  We also write $\pi_{n+1}^{-1}(T)$ for the set of all trees $T'$ obtained by inserting $n+1$ at an internal node of $T$. (This corresponds to the geometric computation of $\pi_{n+1}^{-1}(X_T)$.)  If $\mathbf{T}$ is a set of trees, we write $\pi^{-1}_{n+1}(\mathbf{T})$ for the corresponding (evidently disjoint) union.

With this notation, we may state Definitions \ref{def:slide-psi} and \ref{def:slide-omega} formally as:
\begin{align*}
\Slide^\omega(\mathbf{k}) &:= \slide_{n}^{(k_n)} \circ \pi_n^{-1} \circ \cdots \circ \slide_{i}^{(k_i)} \circ \pi_i^{-1} \circ \cdots \circ \slide_{1}^{(k_1)} \circ \pi_1^{-1}(\abctree), \\
\Slide^\psi(\mathbf{k}) &:= \slide_{n}^{(k_n)} \circ \cdots \circ \slide_{i}^{(k_i)} \circ \cdots \circ \slide_{1}^{(k_1)} (\abcntree).
\end{align*}

We illustrate the slide rule for $\mathbf{k}=(1,0,2)$ for both $\psi$ and $\omega$ in the next two examples.

\begin{example}\label{ex:slide-psi}
As an example, we compute $\Slide^\psi(1,0,2)$. We first start with $\sixtree$, the unique tree with a single internal vertex and six leaves labeled $a,b,c,1,2,3$. We then perform one $1$-slide to obtain the trees:
\begin{center}
    \includegraphics{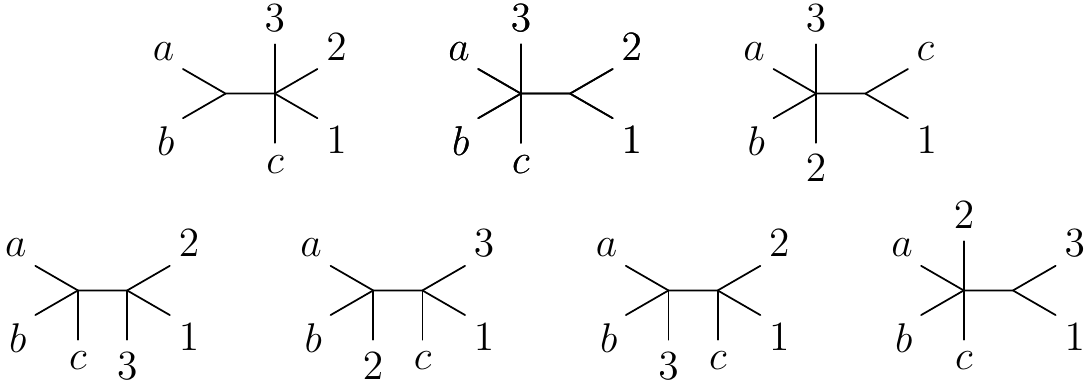}
\end{center}
and then apply two $3$-slides to each of these.  Notice that we can only perform a $3$-slide when the vertex that leaf $3$ is attached to has degree greater than three.  In particular, only the trees in the top row shown above will generate nonempty sets after two $3$-slides.  Performing two $3$-slides on these trees yields the three trivalent trees:
\begin{center}
    \includegraphics{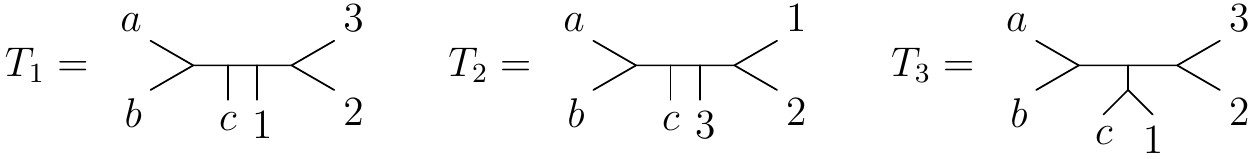}
\end{center}
Thus $\Slide^{\psi}(1,0,2)=\{T_1,T_2,T_3\}$.
\end{example}

\begin{remark} \label{rmk:slides-vertex-degree-bound}
Notice that, at any given step in the slide algorithm, a tree $T$ can be ignored if, for any vertex $v \in T$, the total number of remaining slides for all leaves $i$ adjacent to $v$ is greater than $\deg(v) - 3$. The slides starting from such a tree will eventually result in the empty set. This can also be seen geometrically for dimension reasons, using the factorization in Equation \eqref{eq:stratum-factors}.
\end{remark}

\begin{example}\label{ex:slide-omega}
For comparison, we now compute $\Slide^{\omega}(1,0,2)$.  We start with $\abctree$ and at step $1$ insert the $1$ at an internal vertex in all possible ways (which is only one possible way in this case).  We then perform a $1$-slide:
\begin{center}
    \includegraphics[scale=0.8]{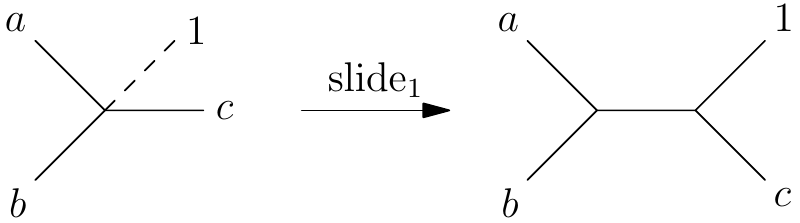}
\end{center}
We then insert $2$ in all possible ways (and do not performing any $2$-slides), then insert $3$ in all possible ways afterwards. We reach the four trees below: 
\begin{center}
    \includegraphics[scale=0.8]{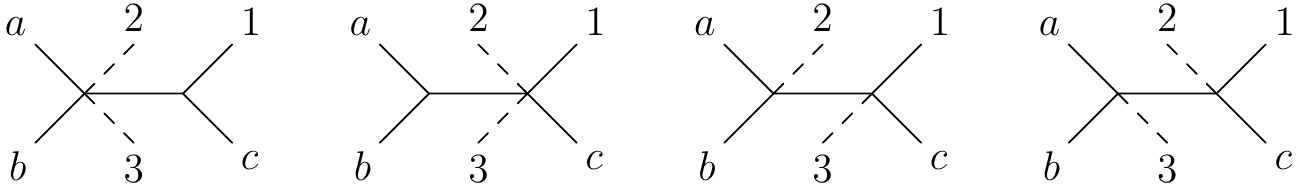}
\end{center}
We finally perform two $3$-slides starting from each of these trees; the two on the right produce the empty set, and the two on the left map to trees $T_3$ and $T_1$ from Example \ref{ex:slide-psi}.  Thus $\Slide^{\omega}(1,0,2)=\{T_1,T_3\}$.
\end{example}

In addition to the generative procedure above, it is also convenient to have a criterion to say directly when a given stable tree $T$ is in $\Slide^\psi(\mathbf{k})$ or $\Slide^\omega(\mathbf{k})$.

\begin{definition}\label{def:slide-labeling}
  The ($\omega$ or $\psi$) \textbf{$\mathbf{k}$-slide labeling} of $T$, if it exists, is formed by the following process (and if the process terminates before completion, it does not exist).  Set $\ell=n$.
  \begin{enumerate}
  \item \textbf{Contract labeled edges.} Let $T'$ be the tree formed by contracting all internal edges of $T$ that are already labeled.
  \item \textbf{Identify the next edge to label.} In $T'$, let $v_\ell$ be the internal vertex adjacent to leaf edge $\ell$. Let $e$ be the first edge on the path from $v_\ell$ to $a$, and let $\overline{v}$ be the other vertex of $e$.  If $\overline{v}=a$, the process terminates; otherwise go to the next step.
  \item \textbf{If minimal values decrease, label the edge.} Define $m_{v_\ell}$ (resp.\ $m_{\overline{v}}$) to be the smallest label on any branch from $v_\ell$ (resp.\ $\overline{v}$) not containing $a$ or $\ell$.  If $\ell>m_{v_\ell}>m_{\overline{v}}$ in the $\omega$ case, or if simply $m_{v_\ell}>m_{\overline{v}}$ in the $\psi$ case, then label edge $e$ by $\ell$ (in both $T'$ and $T$).  Otherwise, the process terminates.
  \item \textbf{Iterate.} If there are less than $k_\ell$ internal edges of $T$ labeled by $\ell$, repeat steps 1--4.  Otherwise, decrement $\ell$ by $1$. If $\ell=0$ the labeling is complete, and if $\ell>0$ repeat steps 1--4.
  \end{enumerate}
\end{definition}

\begin{thm}\label{thm:labelings}
  The sets $\Slide^\omega(\mathbf{k})$ and $\Slide^{\psi}(\mathbf{k})$ are, respectively, the sets of all trivalent trees that admit an $\omega$ or $\psi$ type $\mathbf{k}$-slide labeling.
\end{thm}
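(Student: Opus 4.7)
The plan is to prove both characterizations by induction on $|\mathbf{k}|$ (for $\psi$) or on $n + |\mathbf{k}|$ (for $\omega$), showing that the labeling procedure of Definition \ref{def:slide-labeling} precisely inverts the generative procedures of Definitions \ref{def:slide-psi} and \ref{def:slide-omega}. For the $\psi$ case, let $\ell$ be the largest index with $k_\ell > 0$. By Definition \ref{def:slide-psi}, any $T \in \Slide^\psi(\mathbf{k})$ arises from some $T^- \in \Slide^\psi(\mathbf{k} - \mathbf{e}_\ell)$ by a final $\ell$-slide. By Remark \ref{rmk:slide-contraction}, the slide creates a unique edge $e$ adjacent to $v_\ell$ on the path toward $a$ whose contraction returns $T^-$; moreover, the $\ell$-minimal marked point $m$ of $T^-$ is sent, together with $\mathrm{Br}_m$, to the new vertex $\overline{v}$. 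In $T$, therefore, $m_{\overline{v}} = m$ while $m_{v_\ell}$ is the minimum over the branches that stayed at $v_\ell$, which strictly exceeds $m$ by the minimality of $m$. This is exactly the $\psi$-labeling condition, so the procedure labels $e$ and then recursively processes the contracted tree. The reverse direction uses the same analysis: given a successful labeling, the first edge $e$ labeled $\ell$ satisfies the slide-creation conditions, and contracting $e$ produces a tree $T^-$ that by induction lies in $\Slide^\psi(\mathbf{k} - \mathbf{e}_\ell)$, so $T \in \Slide^\psi(\mathbf{k})$.

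The $\omega$ case involves the extra condition $\ell > m_{v_\ell}$, which encodes the insertion step. At step $i = \ell$ of the $\omega$ generative procedure, $\ell$ is first inserted at an internal vertex of a tree whose leaves lie in $\{a, b, c, 1, \ldots, \ell - 1\}$; the branches at $v_\ell$ not containing $a$ or $\ell$ therefore contain only labels $< \ell$, yielding $m_{v_\ell} < \ell$ at that moment. This inequality is preserved throughout the rest of the generative procedure, since subsequent $i$-slides for $i < \ell$ never introduce leaves greater than $\ell$ (they only rearrange existing branches), so it holds in the final tree $T$. Conversely, given an $\omega$-labeling, the condition $\ell > m_{v_\ell}$ checked in each contracted subtree exactly certifies that $\ell$ could have been inserted after all of its non-$a$ neighbors were already present. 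The forward and reverse directions then proceed exactly as in the $\psi$ case, with the additional inductive step at each $\ell$ corresponding to the insertion of $\ell$ (which contributes no labeled edge but reduces $n$ by one in the induction).

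The main obstacle is verifying these preservation claims rigorously, particularly in the $\omega$ case where insertions and slides interleave. The subtlety is that an $i$-slide with $v_i = v_\ell$ (for $i < \ell$) does modify the branch structure at $v_\ell$, so one must check that even though branches move from $v_\ell$ to a newly created neighbor, the minimum $m_{v_\ell}$ over non-$a$, non-$\ell$ branches stays below $\ell$; this ultimately reduces to the fact that leaves with label $> \ell$ are only inserted (and hence only appear) strictly after step $\ell$, at vertices created during later-stage slides. Once this bookkeeping is established, the fact that the labeling procedure identifies a uniquely determined edge at each iteration (namely, the edge adjacent to $v_\ell$ on the path toward $a$ in the currently contracted tree) makes both directions of the equivalence routine.
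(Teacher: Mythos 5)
Your overall strategy -- showing that the labeling procedure of Definition \ref{def:slide-labeling} inverts the generative procedures, peeling off the most recent slide via Remark \ref{rmk:slide-contraction} -- is exactly the route the paper takes (the paper in fact omits the argument, asserting it is clear from that remark), and your treatment of the $\psi$ case is essentially correct. However, your $\omega$ argument has a genuine gap, precisely at the point you yourself flag as the main obstacle. First, the preservation claim is justified by statements that are false: the steps of the generative procedure occurring after stage $\ell$ are insertions of the leaves $\ell+1,\ldots,n$ and $j$-slides for $j>\ell$ (not $i<\ell$), and those insertions \emph{do} introduce leaves greater than $\ell$, at \emph{any} existing internal vertex -- including $v_\ell$ itself or vertices on the path to $a$ -- not only ``at vertices created during later-stage slides.'' Moreover the inequality $\ell>m_{v_\ell}$ need not hold in the final tree $T$ (in a trivalent $T$ the unique non-$a$ branch at $v_\ell$ may consist entirely of larger leaves); what must be checked is the inequality in the \emph{contracted} tree at stage $\ell$ of the labeling. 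The correct reasons it holds there are different from the ones you give: leaves $j>\ell$ hanging at $v_\ell$ only contribute branches whose minimum exceeds $\ell$, so they cannot raise $m_{v_\ell}$ above $\ell$, and branches moved off $v_\ell$ by later $j$-slides are returned to $v_\ell$ when the $j$-labeled edges are contracted before stage $\ell$ is processed.

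Second, and more seriously, your converse does not certify the insertion steps. You assert that ``the condition $\ell>m_{v_\ell}$ checked in each contracted subtree exactly certifies that $\ell$ could have been inserted,'' but for every index $\ell$ with $k_\ell=0$ the labeling procedure checks \emph{nothing} involving $\ell$, while membership in $\Slide^\omega(\mathbf{k})$ still requires that, in $\pi_{\ell+1,\ldots,n}$ of $T$ with all edges labeled $>\ell$ contracted, the leaf $\ell$ sit at a vertex of degree at least $4$ (i.e.\ that it arise by inserting $\ell$ at an internal vertex of the previous tree, not by subdividing an edge). This validity has to be extracted indirectly from the conditions imposed at the other stages -- for instance via an extra-valency count comparing $T$ with the star on $a,b,c$, combined with an argument that the labeled edges are not collapsed when the larger leaves are forgotten (which itself uses the conditions $\ell>m_{v_\ell}>m_{\overline{v}}$ at stages with $k_\ell>0$). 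Even at stages with $k_\ell>0$ one needs the observation that, after contracting the last $\ell$-labeled edge, the two inequalities supply two distinct small branches at the merged vertex, so the degree condition for the insertion of $\ell$ holds. None of this is routine bookkeeping of the kind your sketch reduces it to, and the specific fact you reduce it to is false, so the $\omega$ direction of your argument does not yet close.
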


By Remark \ref{rmk:slide-contraction}, it is clear that the contraction and labeling steps simply reverse the slides in each case, and we omit the proof. 

The slide labeling interpretation allows us to easily show the following.

\begin{prop} \label{prop:omega-subset-psi}
For all compositions $\mathbf{k}$, $\Slide^\omega(\mathbf{k}) \subseteq \Slide^\psi(\mathbf{k})$.
\end{prop}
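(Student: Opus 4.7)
The plan is to invoke Theorem~\ref{thm:labelings} to reduce the containment of sets to an implication between edge-labeling conditions, and then observe that the $\omega$ condition is literally stronger than the $\psi$ condition at every step of the labeling process. In particular, if $T$ admits an $\omega$-type $\mathbf{k}$-slide labeling, I will show that the \emph{same} sequence of edge labels also constitutes a valid $\psi$-type $\mathbf{k}$-slide labeling of $T$.

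First I would unpack Definition~\ref{def:slide-labeling}: the procedure is deterministic once $T$ and $\mathbf{k}$ are fixed, because at each iteration the label $\ell$, the contracted tree $T'$, the vertex $v_\ell$, the edge $e$, and the other endpoint $\overline v$ are all uniquely specified; the only place the $\omega$ and $\psi$ versions differ is the acceptance test in Step~3. The $\omega$ test demands $\ell > m_{v_\ell} > m_{\overline v}$, while the $\psi$ test only demands $m_{v_\ell} > m_{\overline v}$. Hence whenever the $\omega$ test succeeds, the $\psi$ test succeeds on the same data.

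The key step is then an induction on the steps of the procedure. Suppose $T$ admits an $\omega$-type $\mathbf{k}$-slide labeling $L$. I run the $\psi$-type process on $T$ in parallel with the $\omega$-type process. Inductively, after each step both processes have produced the same set of labeled edges, so the contracted tree $T'$ constructed in Step~1 is identical, and hence $v_\ell$, $e$, $\overline v$, $m_{v_\ell}$, and $m_{\overline v}$ computed in Step~2 agree. The $\omega$ process accepts at this step by hypothesis, so $\ell > m_{v_\ell} > m_{\overline v}$; in particular $m_{v_\ell} > m_{\overline v}$, so the $\psi$ process accepts and attaches the same label $\ell$ to the same edge $e$. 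Iterating through all $n$ blocks of Step~4 shows the $\psi$-process terminates successfully with the labeling $L$, so $T \in \Slide^\psi(\mathbf{k})$ by Theorem~\ref{thm:labelings}.

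I do not foresee any serious obstacle here: the only thing to verify with care is that the deterministic choices in Steps~1--2 of Definition~\ref{def:slide-labeling} depend only on $T$ and on the set of already-labeled edges (not on whether we are in the $\omega$ or $\psi$ variant), which is immediate from reading the definition. The one small point worth remarking on in the write-up is that the containment is generally strict, as already shown by Examples~\ref{ex:slide-psi} and~\ref{ex:slide-omega} (where $\Slide^\omega(1,0,2)=\{T_1,T_3\}\subsetneq\{T_1,T_2,T_3\}=\Slide^\psi(1,0,2)$), confirming that the extra condition $\ell > m_{v_\ell}$ can genuinely rule out trees.
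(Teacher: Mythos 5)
Your proposal is correct and takes essentially the same route as the paper: the paper's proof is exactly the observation that the $\omega$-condition $\ell > m_{v_\ell} > m_{\overline{v}}$ in Step 3 of Definition~\ref{def:slide-labeling} is stricter than the $\psi$-condition $m_{v_\ell} > m_{\overline{v}}$, so any $\omega$-type slide labeling is automatically a $\psi$-type one. Your additional care about the determinism of Steps 1--2 is a fine (if brief-to-the-point-of-implicit in the paper) elaboration, not a different argument.
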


\begin{proof}
Any $\omega$-type slide labeling is also a $\psi$-type slide labeling since the inequality $l>m_{v_\ell}>m_{\overline{v}}$ is a stricter condition than simply $m_{v_\ell}>m_{\overline{v}}$ in step 3 of the slide labeling process.
\end{proof}
This containment can also be seen by `simulating' the generative procedure for $\Slide^\omega$ starting from $\abcntree$ rather than $\abctree$, excluding the leaves $j > i$ when determining the $i$-minimal marked point $m$, and requiring at least one branch containing a leaf $j' < i$ (rather than an arbitrary branch) to remain attached to $v_i$. This expresses $\Slide^\omega$ as a subset of the choices for $\Slide^\psi$.

\begin{example}
The points of $\Slide^\psi(1,0,2)$ are shown in Figure \ref{fig:slide-labeling}, along with their slide labelings.  Note that the middle tree does not admit an $\omega$-type slide labeling, because after contracting the edges labeled $3$, the $1$ compares minima $2$ vs $c$, and while $2>c$, it is not the case that $1>2>c$.  Therefore it only admits a $\psi$-type labeling for $(1,0,2)$ and not an $\omega$-type labeling.
\end{example}

\begin{figure}
    \centering
    \includegraphics{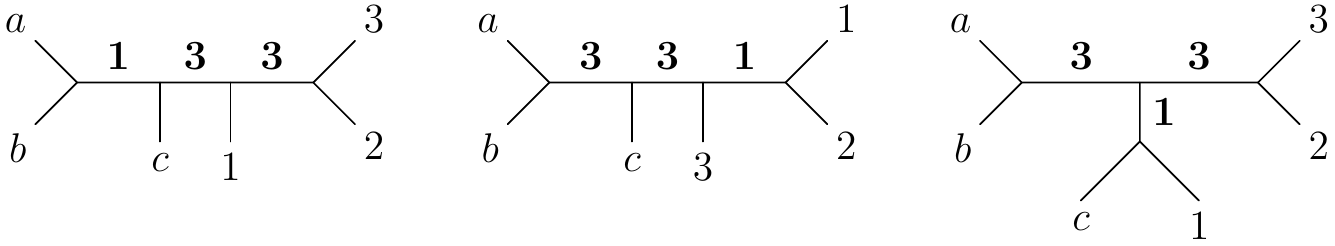}
    \caption{The three points of $\Slide^\psi(1,0,2)$, along with their $\psi$-type slide labelings.  In the third labeling above, we think of the edges labeled $3$ as contracted before trying to label the third edge by $1$.  The $1$ then compares the minima of $c$ vs $b$ in the contracted tree, and hence can ``slide'' along its path towards $a$.}
    \label{fig:slide-labeling}
\end{figure}

\subsection{Nonempty slide sets}

Using the slide labeling rule, we can identify a particular tree that is in all of the (nonempty) sets $\Slide^{\omega}(k_1,\ldots,k_n)$ for $k_1+\cdots+k_n=n$, and in many of the sets $\Slide^{\psi}(k_1,\ldots,k_n)$. We require the following conditions to state these results.

\begin{definition}
Let $\mathbf{k}$ be a composition of $n$. We say $\mathbf{k}$ is \defn{Catalan} if, for all $i$, 
\[k_n+k_{n-1}+\cdots + k_{n-i+1}\ge i.\]
We say $\mathbf{k}$ is \defn{almost-Catalan} for all $i$, 
\[k_n+k_{n-1}+\cdots + k_{n-i+1}\ge i-1.\]
\end{definition}

\begin{prop}\label{prop:common-tree}
Let $T_0$ be the tree
\begin{center}
    $T_0 =$ \raisebox{-.5\height}{\includegraphics{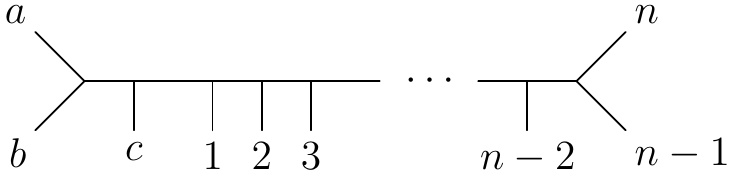}}.
\end{center}
Then $T_0 \in \Slide^\omega(\mathbf{k})$ if and only if $\mathbf{k}$ is Catalan, and $T_0 \in \Slide^\psi(\mathbf{k})$ if and only if $\mathbf{k}$ is almost-Catalan.
\end{prop}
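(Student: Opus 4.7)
The plan is to apply Theorem \ref{thm:labelings}, which reduces the claim to showing that $T_0$ admits a $\mathbf{k}$-slide labeling of the appropriate type ($\omega$ or $\psi$) if and only if $\mathbf{k}$ satisfies the corresponding Catalan-type condition. The proof then becomes a direct analysis of the slide-labeling algorithm applied to $T_0$.

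First, I would trace how the algorithm acts on $T_0$. The tree is a caterpillar whose spine runs from the vertex carrying $a$ and $b$ to the vertex carrying $n-1$ and $n$, with the intermediate vertices carrying the leaves $c, 1, 2, \ldots, n-2$ in increasing order from the $a$-end toward the $n$-end. Since labels are processed in decreasing order $\ell = n, n-1, \ldots, 1$ and each label is placed on the first unlabeled edge from the current $v_\ell$ toward $a$, the labeled edges at every stage form a contiguous initial segment of the spine anchored at the $n$-end. Setting $K_\ell := \sum_{j > \ell} k_j$, a direct calculation shows that at the start of step $\ell$ the merged vertex carries the $K_\ell + 2$ leaves closest to the $n$-end along the spine, with the leaves $c$ and $\{a,b\}$ entering only in the boundary cases $K_\ell = n-1$ and $K_\ell = n$ respectively.

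Next, I would isolate the key structural constraint. For any label of $\ell$ to be placed at step $\ell$, the leaf $\ell$ itself must lie in the merged vertex, which in $T_0$ amounts to $K_\ell \ge n-1-\ell$. Applying this at each $\ell$ with $k_\ell \ge 1$ and propagating via the identity $K_{\ell-1} = K_\ell + k_\ell$ to the remaining values of $\ell$, one obtains precisely the almost-Catalan condition. Under this hypothesis, the $\psi$-type inequality $m_{v_\ell} > m_{\overline v}$ is automatic in $T_0$, since at each edge being labeled $m_{\overline v}$ comes from a leaf attached to the next intermediate vertex toward $a$, which is strictly smaller than every leaf currently in the merged vertex. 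This yields the $\psi$ direction.

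For the $\omega$ direction, the extra strict requirement $\ell > m_{v_\ell}$ sharpens the analysis. The crucial observation is that at the moment of placing the first label of $\ell$, the smallest leaf of the merged vertex other than $\ell$ itself equals $\ell+1$ precisely when $K_\ell = n-1-\ell$ (the equality case of almost-Catalan), and drops strictly below $\ell$ as soon as $K_\ell \ge n-\ell$. So the $\omega$ rule fails in the equality case and succeeds exactly under $K_\ell \ge n-\ell$ for every $\ell$, which is the Catalan condition; subsequent labels of $\ell$ during the same step only help, since the merged vertex can only grow. The main obstacle I expect to navigate is the bookkeeping in the boundary steps where $c$ and $\{a,b\}$ are absorbed into the merged vertex: one must verify there that the decreasing-minima inequalities still hold (they do, because $c$ and $b$ are smaller than every numbered leaf) and that the termination condition $\overline v = a$ triggers only after all the required labels of each value have been placed.
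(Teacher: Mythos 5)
Your overall approach is the same as the paper's (reduce via Theorem \ref{thm:labelings} to analyzing the slide-labeling algorithm of Definition \ref{def:slide-labeling} on the caterpillar $T_0$), and your $\omega$ analysis and both sufficiency directions are essentially correct. But there is a genuine gap in the necessity direction for $\psi$. Your ``key structural constraint'' --- that for any label of $\ell$ to be placed at step $\ell$, the leaf $\ell$ must already lie in the merged vertex, i.e.\ $K_\ell \ge n-1-\ell$ --- is simply false for the $\psi$-rule. The $\psi$-test only asks $m_{v_\ell} > m_{\overline v}$, and on $T_0$ this inequality holds \emph{regardless} of what has been labeled, because the leaf values increase along the spine away from $a$: even if nothing to the right of $\ell$ has been labeled, one compares $m_{v_\ell} = \ell+1$ against $m_{\overline v} = \ell-1$ (or $c$), which passes, and the label is placed on the edge just left of leaf $\ell$. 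Concretely, for $n=3$ and $\mathbf{k}=(3,0,0)$ the algorithm legally places labels of $1$ on the two leftmost internal edges before terminating at $\overline v = a$; no placement is ever blocked by the minima test. For the same reason, your opening claim that the labeled edges form a contiguous right-anchored segment ``at every stage'' does not follow from the placement rule alone in the $\psi$ case (it is a consequence of the labeling being \emph{completable}, not of the rule), so the bookkeeping with $K_\ell+2$ leaves is not yet justified there.

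The repair is the counting argument the paper uses: every label is placed on the first unlabeled edge on the path from its own leaf toward $a$, so an internal edge strictly to the right of leaf $\ell$ can only ever receive a label greater than $\ell$. Since a complete labeling (with $\sum k_i = n$) must label all $n$ internal edges, the $n-1-\ell$ edges to the right of leaf $\ell$ must be covered by the supply $K_\ell = k_n+\cdots+k_{\ell+1}$ of larger labels, forcing $K_\ell \ge n-1-\ell$, which is the almost-Catalan condition; equivalently, one can argue that a ``jump-ahead'' placement creates a gap of right-hand edges that can never be labeled afterwards, so the labeling cannot complete. For $\omega$, your local argument does work, because the extra condition $\ell > m_{v_\ell}$ genuinely forces leaf $\ell$ into the right-anchored merged segment with a smaller leaf present (giving $K_\ell \ge n-\ell$), and this is also what legitimizes the contiguity bookkeeping in that case; this is exactly the paper's argument.
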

\begin{proof}
Let $e_c, e_1, e_2,\ldots,e_{n-1}$ be the internal edges in $T_0$ above from left to right.

For $\omega$, the slide labeling is valid if and only if, just before an edge is labeled by $i$, the $i$-minimal element (after contracting previously labeled edges) is less than $i$.  This occurs if and only if some larger label $j>i$ labels the edge $e_i$ before we begin labeling edges by $i$.  In addition, all edges to the right of $e_i$ must have labels larger than $i$ as well, since the edge labelings occur along the paths towards $a$.  Thus the total number of edges labeled before step $i$, which is given by $k_n+k_{n-1}+\cdots+k_{i+1}$, is at least as large as the number of internal edges to the right of vertex $v_{i-1}$, namely, $n-i$.  Thus we have $$k_{n}+k_{n-1}+\cdots+k_{i+1}\ge n-i$$ for all $i$.  Since $k_1+\cdots+k_n=n$, this is equivalent to the Catalan condition.

For $\psi$, the same argument as above holds except that $e_i$ does not have to be labeled by something larger than $i$, and so we only need $k_{n}+k_{n-1}+\cdots+k_{i+1}\ge n-i-1$, which is equivalent to the almost-Catalan condition.
\end{proof}

\begin{prop}\label{prop:Catalan}
For a composition $\mathbf{k}$ with $k_1+k_2+\cdots+k_n=n$, the set $\Slide^{\omega}(\mathbf{k})$ is nonempty if and only if $\mathbf{k}$ is Catalan.
\end{prop}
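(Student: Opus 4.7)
The backward direction is immediate from Proposition \ref{prop:common-tree}: if $\mathbf{k}$ is Catalan, then $T_0 \in \Slide^\omega(\mathbf{k})$, so the set is nonempty. My plan for the forward direction is to exploit the generative description of $\Slide^\omega(\mathbf{k})$ from Definition \ref{def:slide-omega}. Suppose $T \in \Slide^\omega(\mathbf{k})$, and fix a sequence of insertions and slides that produces $T$. Let $T_i$ denote the intermediate tree obtained after inserting leaf $i$ and performing the $k_i$ subsequent $i$-slides, so that $T_0 = \abctree$ and $T_n = T$.

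First I would verify by induction on $i$ that each $T_i$ is stable and has $i+3$ leaves, $k_1+\cdots+k_i$ internal edges, and $1+k_1+\cdots+k_i$ internal vertices. Insertion of a leaf at an existing internal vertex does not change the internal vertex or edge count (it only increases one vertex's degree by one, so stability is preserved). Each $i$-slide, by Definition \ref{def:slide-rule}, adds exactly one new internal vertex $\bar{v}$ and one new internal edge $(v_i,\bar{v})$, and by construction $\slide_i$ only produces stable trees.

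The core step is the handshake count applied to $T_i$:
\begin{equation*}
\sum_{v \text{ internal in } T_i} \deg(v) = 2(k_1+\cdots+k_i) + (i+3),
\end{equation*}
since each internal edge contributes $2$ to the sum and each leaf edge contributes $1$. Stability forces $\deg(v) \geq 3$ for every internal vertex, so the sum is at least $3(1 + k_1 + \cdots + k_i)$. Rearranging gives $k_1+\cdots+k_i \leq i$ for every $i = 1, \ldots, n$. Using the hypothesis $\sum_j k_j = n$, this is equivalent to $k_n + k_{n-1} + \cdots + k_{i+1} \geq n-i$ for all $i$, which is precisely the Catalan condition. I do not anticipate a serious obstacle: the argument reduces to a transparent handshake count, and the intermediate stability of the $T_i$ is guaranteed by the definitions of insertion and $\slide_i$.
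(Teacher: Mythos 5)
Your proof is correct and takes essentially the same route as the paper: the backward direction cites Proposition \ref{prop:common-tree}, and your handshake count through the generative procedure computes exactly the paper's invariant, namely that the extra valency after step $i$ equals $i-(k_1+\cdots+k_i)$ and must be nonnegative by stability, which combined with $\sum_j k_j = n$ gives the Catalan condition. The only difference is cosmetic (the paper tracks extra valency as a step-by-step invariant, while you recover it from internal edge/vertex counts), plus a harmless notational clash in reusing $T_0$ for both $\abctree$ and the caterpillar tree of Proposition \ref{prop:common-tree}.
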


While this follows from Corollary \ref{cor:multidegrees} combined with the combinatorial results on multidegrees in \cite{CGM}, we give a direct combinatorial proof here.

\begin{proof}
Note that the extra valency (see Section \ref{sec:background-trees}) of all trees at a given step of the slide rule algorithm is a fixed constant; indeed, inserting a new leaf increases the extra valency by $1$, and applying $\slide_i$ decreases it by $1$.  In particular, after step $i$ we have a set of trees having extra valency $i-(k_1+k_2+\cdots+k_i)$.

Now, suppose $\Slide^{\omega}(\mathbf{k})$ is nonempty. Then since the extra valency at step $i$ is $i-(k_1+k_2+\cdots+k_i)$, we have $i\ge k_1+k_2+\cdots+k_i$ for all $i$, and a simple algebraic manipulation (along with the fact that $k_1+k_2+\cdots+k_n = n$) shows that this is equivalent to the Catalan condition.

The converse follows from Proposition \ref{prop:common-tree}.
\end{proof}

\begin{remark}
The sets $\Slide^{\psi}(\mathbf{k})$ are nonempty for all $\mathbf{k}$ with $\sum k_i\le n$, since the extra valency at each step is $n-(k_1+\cdots+k_i)$, and the valency can always be distributed in each slide to guarantee that before the $i$th slide the vertex attached to $i$ has degree at least $k_{i}+3$.
\end{remark}

\section{Limiting hyperplanes on \texorpdfstring{$\Mbar_{0,n}$}{} and \texorpdfstring{$\psi$}{} and \texorpdfstring{$\omega$}{} product formulas}\label{sec:limiting-hyperplanes}

We now show that the trees in $\Slide^{\omega}(\mathbf{k})$ and $\Slide^{\psi}(\mathbf{k})$ describe boundary strata representing, respectively, the cycle classes $\omega^\mathbf{k} := \omega_1^{k_1} \cdots \omega_n^{k_n}$ and $\psi^\mathbf{k} := \psi_1^{k_1} \cdots \psi_n^{k_n}$.

We will do this by constructing an explicit flat limit of hyperplanes. We start with necessary general preliminaries on flat limits.

\subsection{Flat limits}
\newcommand{\specDVR}{\mathrm{Spec}(R)}

Let $M$ be a smooth projective variety, $T$ a smooth curve (we will always use $\mathbb{A}^1$ or an open subset thereof), $0 \in T$ a closed point, and $t\in T$ the generic point. Let $V \subseteq M \times T$ be a closed subscheme. We write $V_0$ for the fiber over $0$ and $V_t$ for the generic fiber.

The \emph{flat limit of $V_t$ as $t \to 0$} is by definition the fiber of the scheme-theoretic closure,
\[ \lim_{t \to 0} V_t := (\overline{V|_{T - \{0\} }}) |_0.\]
Algebraically, the limit is given by saturating the ideal of $V$ with respect to $t$, then setting $t=0$. In general we have \[\lim_{t \to 0} V_t \subseteq V_0,\]
but equality need not hold; in fact it holds (scheme-theoretically) if and only if $V$ is flat over a neighborhood of $0 \in T$. See \cite[Proposition III.9.8]{Hartshorne}.

Below, our approach will involve calculating the cycle class of a flat limit by finding an ``almost-transverse'' $V_0$ that equals it generically.  A scheme $X$ is \emph{generically reduced} if it is reduced on some dense open subscheme; in this case, all the irreducible components of $X$ have multiplicity 1.  We also say $X$ has \emph{pure dimension $d$} if all of its irreducible components have the same dimension $d$.  

We recall the following fact about transversality and intersection products:

\begin{prop} \label{prop:gen-reduced-intersection}
Let $M$ be a smooth variety (not necessarily proper) and $X, X' \subseteq M$ subschemes of pure codimensions $c, c'$. Suppose $X \cap X'$ is of pure codimension $c+c'$ and is generically reduced. Then $[X \cap X'] = [X] \cdot [X']$.
\end{prop}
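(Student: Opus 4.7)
The plan is to apply the standard intersection-product formula on a smooth ambient variety. Since $M$ is smooth and $X \cap X'$ has pure codimension equal to the expected sum $c + c'$, the two subschemes intersect \emph{properly}, and one has the decomposition
\begin{equation*}
[X] \cdot [X'] = \sum_Z i(Z; X \cdot X'; M) \, [Z],
\end{equation*}
where $Z$ ranges over the irreducible components of $X \cap X'$ and $i$ denotes the intersection multiplicity defined via Serre's Tor formula at the generic point $\eta_Z$ of $Z$. (Smoothness, rather than properness of $M$, is what is needed to make sense of $[X]\cdot[X']$ here, via Fulton's refined intersection product using the diagonal $\Delta : M \hookrightarrow M \times M$.)

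The first step is to reduce each multiplicity to a length. By Serre's vanishing theorem for proper intersections on a regular local ring---here $\mathcal{O}_{M,\eta_Z}$, which is regular because $M$ is smooth---the higher $\operatorname{Tor}$ terms in the definition of $i$ vanish, leaving
\begin{equation*}
i(Z; X \cdot X'; M) = \operatorname{length}_{\mathcal{O}_{M,\eta_Z}} \mathcal{O}_{X \cap X', \eta_Z}.
\end{equation*}

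The second step uses the generic reducedness hypothesis. At the generic point $\eta_Z$ of an irreducible component $Z$, the local ring $\mathcal{O}_{X \cap X', \eta_Z}$ is zero-dimensional (because $Z$ is a full component of $X \cap X'$) and reduced (by assumption), hence is a field. Therefore each intersection multiplicity equals $1$, and the formula collapses to $[X] \cdot [X'] = \sum_Z [Z] = [X \cap X']$. The only substantive ingredient is Serre's vanishing theorem (equivalently, the Fulton--MacPherson refined intersection theory in the proper-codimension case on a smooth variety); given that, the remainder of the argument is a direct unwinding of definitions, so I do not anticipate any real obstacle in writing out the details.
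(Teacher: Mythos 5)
Your conclusion is correct, but the first step of your argument is wrong as stated, and it is the substantive step. There is no ``Serre vanishing theorem'' asserting that the higher $\operatorname{Tor}$'s vanish for an arbitrary proper intersection over a regular local ring; Serre's vanishing theorem says the Euler characteristic $\sum_i(-1)^i\operatorname{length}\operatorname{Tor}_i$ is zero when $\dim A/I+\dim A/J<\dim A$, i.e.\ in the \emph{improper} case. For a proper intersection the higher $\operatorname{Tor}$'s need not vanish and the multiplicity is in general strictly smaller than the length: e.g.\ in $M=\mathbb{A}^4$ take $X=V(x,y)\cup V(z,w)$ and $X'=V(x-z,y-w)$; they meet properly at the origin, $\operatorname{length}\mathcal{O}_{X\cap X',0}=3$, but the intersection multiplicity is $2$ (here $\operatorname{Tor}_1$ has length $1$). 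The identity ``multiplicity $=$ length'' requires an extra hypothesis such as Cohen--Macaulayness of $X$ and $X'$ at $\eta_Z$, which you neither assume nor establish, so as written the argument has a genuine gap exactly where the work is.

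The statement itself survives, and there are two ways to close the gap. The paper's proof bypasses $\operatorname{Tor}$ entirely: by Fulton, Prop.\ 8.2(a), for a proper intersection on a smooth variety the coefficient of each component $Z$ of $X\cap X'$ in $[X]\cdot[X']$ satisfies $1\le m(Z)\le \operatorname{length}_{\mathcal{O}_{M,\eta_Z}}\mathcal{O}_{X\cap X',\eta_Z}$, and generic reducedness makes the right-hand side $1$, forcing $m(Z)=1$. Alternatively, your route can be repaired by first proving that generic reducedness plus proper codimension forces both $\mathcal{O}_{X,\eta_Z}$ and $\mathcal{O}_{X',\eta_Z}$ to be regular with transverse tangent spaces: in $A=\mathcal{O}_{M,\eta_Z}$ (dimension $c+c'$) the condition $I+J=\mathfrak{m}$ means the images of $I$ and $J$ span $\mathfrak{m}/\mathfrak{m}^2$, while their spans have dimensions at most $c$ and $c'$ respectively, so both bounds are attained; then $I$ and $J$ are generated by complementary parts of a regular system of parameters, the Koszul resolution shows $\operatorname{Tor}_i=0$ for $i>0$, and the multiplicity is indeed $1$. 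But that argument is precisely the content your write-up asserts by appeal to a nonexistent general theorem.
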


\begin{proof}
By \cite[Prop 8.2(a)]{fulton:it}, each irreducible component $Z \subseteq X \cap X'$ occurs in $[X] \cdot [X']$ with coefficient between $1$ and the scheme-theoretic multiplicity of $Z$ in $X \cap X'$. Generic reducedness says that this multiplicity is also $1$.
\end{proof}

The next lemma is a ``generically reduced'' version of Lemma 37.24.6 in the Stacks project \cite[\href{https://stacks.math.columbia.edu/tag/0574}{Tag 0574}]{stacks-project}, which is the analogous result for \textit{reduced} fibers.

\begin{lemma}\label{lem:gen-reduced-fiber}
Let $V \to T$ be flat and proper over a neighborhood of $0 \in T$. Assume $V$ is pure of dimension $d$. If $V_0$ is generically reduced, so is $V_t$.
\end{lemma}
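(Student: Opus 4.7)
The plan is to reduce the generically reduced case to the reduced-fiber case cited just above (\cite[Lemma 37.24.6]{stacks-project}) by removing from $V$ the non-reduced locus of $V_0$ and then controlling the effect of this removal on nearby fibers via a dimension argument. First I would let $Z_0 \subsetneq V_0$ denote the closed non-reduced locus of $V_0$ and $U_0 = V_0 \setminus Z_0$ its reduced, dense open complement. Since $V \to T$ is flat of relative dimension $d-1$ and $V_0$ is pure of that dimension, we have $\dim Z_0 \leq d-2$. Because $V_0$ is closed in $V$, the subscheme $Z_0$ is closed in $V$ as well, so $V^\circ := V \setminus Z_0$ is an open subscheme of $V$, still flat over $T$, with reduced special fiber $V^\circ_0 = U_0$.

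Next I would apply the pointwise openness of the geometrically-reduced-fiber locus for flat, locally finitely presented morphisms (a standard variant of the argument underlying \cite[Lemma 37.24.6]{stacks-project}): the set
\[ W := \{ v \in V^\circ : V^\circ_{t(v)} \text{ is geometrically reduced at } v \} \]
is open in $V^\circ$ and contains all of $V^\circ_0$. Setting $C := V \setminus W$, a closed subscheme of $V$, we have $C \cap V_0 \subseteq Z_0$ of dimension $\leq d-2$. The key technical step is then a dimension argument on the irreducible components of $C$: each component $C_i$ is either contained in a single fiber $V_{p_i}$ (we shrink $T$ to exclude any $p_i \neq 0$, and if $p_i = 0$ then $C_i \subseteq Z_0$ has $\dim C_i \leq d-2$), or dominates $T$, in which case upper semicontinuity of fiber dimension combined with the bound on $C \cap V_0$ forces the generic fiber of $C_i \to T$ to have dimension $\leq d-2$.

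Combining these cases and shrinking $T$ to a suitable neighborhood of $0$, we obtain $\dim(V_t \cap C) \leq d-2$ for all $t \neq 0$ in this neighborhood. Since $V_t$ is pure of dimension $d-1$, the intersection $V_t \cap W$ is a dense open subscheme of $V_t$; it is reduced because the fibers of $W \to T$ are geometrically reduced by construction. Hence $V_t$ is generically reduced. The main obstacle I anticipate is executing the dimension argument for the dominant components of $C$ cleanly: one must carefully apply upper semicontinuity to rule out an upward jump in fiber dimension at $t = 0$ large enough to violate the bound $\dim(V_t \cap C) \leq d-2$ for $t$ near $0$, and this is what justifies the final shrinking of $T$.
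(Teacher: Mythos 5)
Your proposal is correct, but it proves the lemma by a genuinely different route than the paper. The paper argues directly with tangent spaces: for each irreducible component $Z$ of $V_t$, properness and flatness force $\overline{Z} \cap V_0$ to be nonempty of pure dimension $d-1$; choosing a closed point $x$ of $\overline{Z}\cap V_0$ inside the dense open set where $V_0$ is reduced (hence, over $\mathbb{C}$, smooth), the fact that $V_0$ is cut out of $V$ by the single equation $t=0$ bounds the Zariski tangent space of $V$ at $x$ by $d$, so $V$ is regular, in particular reduced, at $x$ and therefore at the generic point of $Z$. You instead invoke the pointwise openness of the locus of points at which the fiber is geometrically reduced, for flat, locally finitely presented morphisms (EGA IV 12.2.4; this is precisely the source-level companion of the Stacks Project lemma the paper cites, not merely a ``variant of its argument''), and then run a dimension count on the bad closed set $C = V \setminus W$: its trace on $V_0$ lies in the nonreduced locus $Z_0$ of dimension $\le d-2$, and properness together with upper semicontinuity of fiber dimension (the locus in $T$ where $\dim C_t \ge d-1$ is closed and misses $0$, hence misses the generic point) gives $\dim(C \cap V_t) \le d-2$, so $V_t \cap W$ is a dense open reduced subscheme of the pure $(d-1)$-dimensional $V_t$. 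Both proofs use properness in the same essential way, namely to force the relevant locus to meet $V_0$; yours relies on heavier standard machinery but yields a bit more (generic reducedness of all fibers near $0$, with no appeal to generic smoothness and hence no characteristic-zero input), while the paper's is shorter and self-contained. One small correction of emphasis: your stated worry about ``an upward jump in fiber dimension at $t=0$'' is backwards --- semicontinuity permits the fiber of $C$ over $0$ to be larger than nearby ones, and that is harmless; what your argument actually uses is that the bound at $0$ controls the generic fiber, and for the conclusion about $V_t$ (the generic fiber, which is all the lemma asserts) no shrinking of $T$ is even needed, since components of $C$ lying over closed points do not meet $V_t$ at all.
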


\begin{proof}
Let $Z \subseteq V_t$ be an irreducible component and let $\overline{Z}$ be its closure in $V$.  Since $t$ is the generic point of $T$, $\overline{Z} \to T$ is dominant and flat; by properness the image contains $0 \in T$, so $\overline{Z} \cap V_0$ is nonempty. Hence by flatness $\overline{Z} \cap V_0$ is of pure dimension $d-1$.

Let $Z' \subseteq \overline{Z} \cap V_0$ be an irreducible component. By assumption, $V_0$ is reduced and \emph{smooth} along some dense open subset $U\subseteq V_0$.  Let $x \in U\cap Z'$ be a closed point (which must exist since $U$ is dense and $Z'$ is an irreducible component). Then the Zariski tangent space to $V_0$ at $x$ has dimension exactly $d-1$. Since $V_0$ is locally cut out in $V$ by the single equation $t=0$, the Zariski tangent space to $V$ at $x$ has dimension $\leq (d-1) + 1 = d$. Since this matches the Krull dimension of $V$, it follows that $x$ is a smooth, in particular reduced, point of $V$. Therefore $\overline{Z}$ is actually smooth and reduced at $x$, hence is generically (smooth and) reduced. Since $Z$ was arbitrary, it follows that $V_t$ is generically reduced.
\end{proof}

We will need the following statement about ``almost-transversality'' for dynamic intersections, a criterion for the flat limit to be generically reduced.

\begin{lemma} \label{lem:generically-equal-limits}
Let $M$ be a smooth projective variety, $T$ a smooth curve and $0 \in T$. Let $V \subseteq M \times T$ be a subscheme, flat over $T$ and pure of relative dimension $d$. Let $\psi : M \to \PP^n$ be a map and $H \subseteq \mathbb{P}^n$ a hypersurface.

Suppose $\psi^{-1}(H) \cap V_0$ is generically reduced and of pure dimension $d-1$. Then $\displaystyle{\lim_{t \to 0}(\psi^{-1}(H) \cap V_t)}$ is generically reduced and has the same underlying set as $\psi^{-1}(H) \cap V_0$.
\end{lemma}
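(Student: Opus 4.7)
The idea is to work inside the ambient flat family. Set $W := V \cap \psi^{-1}(H)$, regarded as a subscheme of $V \subseteq M \times T$ (where we abuse notation and also write $\psi$ for the pullback to $M \times T$). Then $W_t = \psi^{-1}(H) \cap V_t$ for all $t \in T$, and the flat limit in the statement is $\widetilde W|_0$, where $\widetilde W := \overline{W|_{T \setminus \{0\}}}$ is the scheme-theoretic closure of $W$ away from $0$. I will show (i) $\widetilde W$ and $W$ have the same underlying topological space, yielding the set-theoretic equality of $\widetilde W|_0$ and $W_0$; and (ii) the generic reducedness of $W_0$ transfers to $\widetilde W|_0$ via the scheme-theoretic inclusion $\widetilde W \subseteq W$.

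The main geometric input is a dimension count. Since $V$ is flat over the smooth curve $T$ with fibers of pure dimension $d$, $V$ is equidimensional of dimension $d+1$ and every irreducible component dominates $T$. The subscheme $W$ is cut from $V$ by a single equation (the pullback of the defining equation of $H$), so by Krull's Hauptidealsatz every component of $W$ has codimension at most $1$ in $V$. If some component $V^{(i)} \subseteq V$ were entirely contained in $\psi^{-1}(H)$, then the fiber $V^{(i)} \cap V_0$ would have dimension $d$ (by dominance) and would lie inside $W_0$, contradicting $\dim W_0 = d-1$. Hence $W$ has pure dimension $d$; and by the same count no irreducible component of $W$ can be contained in $V_0$, for such a component would be a full $d$-dimensional component of $V_0$ lying in $W_0$.

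Consequently $W \setminus V_0$ is topologically dense in $W$, so $\widetilde W$ and $W$ have the same underlying space, and taking fibers over $0$ yields the desired set-theoretic equality with $W_0$. For generic reducedness, $\widetilde W \subseteq W$ as closed subschemes of $V$, whence $\widetilde W|_0 \subseteq W_0$ scheme-theoretically. Let $U \subseteq W_0$ be a dense open on which $W_0$ is reduced, provided by the hypothesis. Then $\widetilde W|_0 \cap U$ is a closed subscheme of $U$ with the same underlying set as $U$; but on a reduced affine scheme $\mathrm{Spec}\,R$, any ideal $J$ with $V(J) = \mathrm{Spec}\,R$ satisfies $J \subseteq \sqrt{(0)} = 0$. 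Hence $\widetilde W|_0 \cap U = U$ as schemes, so $\widetilde W|_0$ is reduced on the dense open $U$, i.e., generically reduced.

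\textbf{Main obstacle.} No single step is hard; the delicacy lies in keeping scheme-theoretic and set-theoretic containments straight (in particular, I deliberately avoid trying to prove the stronger scheme-theoretic equality $\widetilde W = W$, which would require a Cohen--Macaulay hypothesis on $V$). The two hypotheses on $W_0$ carry all the weight: purity of dimension $d-1$ excludes any ``vertical'' components of $W$, ensuring the flat limit captures $W_0$ as a set, while generic reducedness upgrades the resulting set-theoretic equality on $U$ into the scheme-theoretic equality $\widetilde W|_0 \cap U = U$ needed to conclude.
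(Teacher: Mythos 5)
Your proof is correct, but it takes a genuinely different and more elementary route than the paper's. The paper argues at the level of cycle classes: it first shows the flat limit $F_0$ is pure of dimension $d-1$ (upper semicontinuity of fiber dimension plus Krull on the generic fiber, then flatness of the closure), deduces generic reducedness from the scheme-theoretic containment $F_0 \subseteq \psi^{-1}(H)\cap V_0$ in equal pure dimension, and only then obtains the set-theoretic equality by proving the identity of fundamental cycles $[F_0]=[\psi^{-1}(H)\cap V_0]$, which requires Proposition \ref{prop:gen-reduced-intersection}, Lemma \ref{lem:gen-reduced-fiber} (to transfer generic reducedness to the generic fiber), and Fulton's conservation of number. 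You instead work on the total space $W=V\cap\psi^{-1}(H)$: the hypothesis that $W_0$ has pure dimension $d-1$, combined with Krull's Hauptidealsatz and the standard facts that flatness over the smooth curve $T$ plus pure relative dimension $d$ make $V$ equidimensional of dimension $d+1$ with every component dominating $T$, rules out both components of $V$ contained in $\psi^{-1}(H)$ and vertical components of $W$; hence the scheme-theoretic closure $\widetilde{W}$ has the same support as $W$, which immediately gives the set-theoretic statement, and generic reducedness follows from the closed immersion $\widetilde{W}|_0\subseteq W_0$ together with that support equality. This avoids the intersection-theoretic input entirely and is self-contained; conversely, the paper's route produces along the way the purity of the limit and the cycle identity $[F_0]=[\psi^{-1}(H)]\cdot[V_0]$, which your argument only recovers a posteriori (equal supports, equal pure dimension, both generically reduced). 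Two small points worth tightening, neither a gap: where you say $V^{(i)}\cap V_0$ ``has dimension $d$ (by dominance),'' the precise justification is that the fiber is nonempty by properness of $V^{(i)}\to T$ (here projectivity of $M$ is used) and dominance, and has dimension at least $d$ by Krull applied to the single equation cutting out the fiber over $0$; and the equidimensionality of $V$ with all components dominant deserves a sentence (dominance of components from flatness over a Dedekind base, and the dimension count from purity of the generic fiber), though both facts are standard.
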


\begin{proof}
Write $\displaystyle{F_0 = \lim_{t \to 0} (\psi^{-1}(H) \cap V_t)}$ for the flat limit. We first check that $F_0$ is pure of dimension $d-1$. By flatness, it is enough to show that $\psi^{-1}(H) \cap V_t$ is pure of dimension $d-1$. Fiber dimension is upper semi-continuous for proper maps (\cite[Theorem 11.4.2]{Vakil}), so 
\[\dim (\psi^{-1}(H) \cap V_t) \leq \dim(\psi^{-1}(H) \cap V_0) = d-1.\]

Conversely, since $\psi^{-1}(H)$ is a Cartier divisor, $\psi^{-1}(H) \cap V_t$ is given by a principal ideal on $V_t$, so by Krull's principal ideal theorem and the purity of $V_t$, every component of $\psi^{-1}(H) \cap V_t$ has dimension $\geq \dim(V_t) - 1 = d-1$. Thus, $\psi^{-1}(H) \cap V_t$ is pure of dimension $d-1$ as required.

Next, since $F_0 \subseteq \psi^{-1}(H) \cap V_0$ and $\psi^{-1}(H) \cap V_0$ is generically reduced and both are of the same (pure) dimension, $F_0$ is also generically reduced.

Finally, we show that $F_0$ agrees set-theoretically with $\psi^{-1}(H) \cap V_0$, i.e. $\psi^{-1}(H) \cap V_0$ does not have extra components compared to $F_0$. It suffices to show that the fundamental cycles $[F_0]$ and $[\psi^{-1}(H) \cap V_0]$ are the same. We have
\begin{equation} \label{eq:zero-eq-prod}
[\psi^{-1}(H) \cap V_0] = [\psi^{-1}(H)] \cdot [V_0] \end{equation}
by Proposition \ref{prop:gen-reduced-intersection} and our assumption on $\psi^{-1}(H) \cap V_0$. Also, by Lemma \ref{lem:gen-reduced-fiber}, since $F_0$ is generically reduced, so is $\psi^{-1}(H) \cap V_t$, so by Proposition \ref{prop:gen-reduced-intersection} a second time, 
\begin{equation} \label{eq:t-eq-prod}
[\psi^{-1}(H) \cap V_t] = [\psi^{-1}(H)] \cdot [V_t].
\end{equation}
Lastly, by \cite[Corollary 11.1]{fulton:it}, the limit intersection class satisfies
\begin{equation} \label{eq:lim-eq-prod}
\lim_{t\to 0} \Big( [\psi^{-1}(H)] \cdot [V_t] \Big) =
[\psi^{-1}(H)] \cdot [V_0].
\end{equation}
Combining, we have
\begin{align}
[F_0] := \lim_{t \to 0}\ [\psi^{-1}(H) \cap V_t] 
&= \lim_{t\to 0} \Big( [\psi^{-1}(H)] \cdot  [V_t] \Big) \
\text{by } \eqref{eq:t-eq-prod},\\
&= [\psi^{-1}(H)] \cdot [V_0] \ 
\text{by } \eqref{eq:lim-eq-prod}, \\
&= [\psi^{-1}(H) \cap V_0] \
\text{by } \eqref{eq:zero-eq-prod}.
\end{align}
This completes the proof.
\end{proof}

We note that these hypotheses do \emph{not} imply $\psi^{-1}(H) \cap V_0 = F_0$ scheme-theoretically, as the following example illustrates.

\begin{example}
Let $\mb{P}^3$ have coordinates $[x:y:z:w]$, and let $V \subset \mb{P}^3 \times \mr{Spec} k[t]$ be defined by the ideal \[(x) \cap (x, y-tw, z-tw)^2,\] that is, $V_t$ is the plane $x=0$ with an embedded nonreduced point located at $p = [0:t:t:1]$. Let $H$ be the hyperplane $y=0$. Then $H \cap V_0$ is the line $x=y=0$ with an embedded point at $[0:0:0:1]$, whereas the flat limit $F_0 = \lim_{t \to 0}(H \cap V_t)$ is the reduced line $x=y=0$. However, $F_0$ and $H \cap V_0$ are generically equal.
\end{example}

We will apply Lemma \ref{lem:generically-equal-limits} repeatedly to analyze iterated limits, in the following form.

\begin{lemma} \label{lem:generically-equal-limits-iterated}
Let $V \subseteq M \times T$ be a closed subscheme, flat over $T$ and pure of relative dimension $d$. Let $\psi : M \to \PP^n$ be a map and let $H \subset \mb{P}^n \times T$ be a flat family of hypersurfaces. Suppose $\psi^{-1}(H_0) \cap V_0$ is generically reduced and of pure dimension $d-1$.  

Then $\displaystyle{\lim_{s \to 0} \lim_{t \to 0} \big( \psi^{-1}(H_s) \cap V_t}\big)$ is generically reduced and, \emph{set-theoretically}, we have the equality
\[
\lim_{s \to 0} \lim_{t \to 0} \big( \psi^{-1}(H_s) \cap V_t \big) 
= \lim_{s \to 0} \big(\psi^{-1}(H_s) \cap \lim_{t \to 0} V_t\big) 
\ \ \bigg(= \lim_{s \to 0} (\psi^{-1}(H_s) \cap V_0) \bigg).\]
That is, we may ``pull the $H_s$ past the $\displaystyle{\lim_{t \to 0}}$'' without changing the generic scheme structure.
\end{lemma}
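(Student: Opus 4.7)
The plan is to view both sides of the equation as flat limits arising from a 2-parameter family, exploit a natural containment of the iterated limit into the right-hand side, and use equality of cycle classes (combined with generic reducedness of the right-hand side) to force set-theoretic equality. Concretely, I consolidate the two parameters by setting $Z = \psi^{-1}(H) \cap V$ inside $M \times T \times T$, where I pull back $V$ (originally over the $t$-factor) and $\psi^{-1}(H)$ (originally over the $s$-factor) along the two projections. The inner flat limit $\lim_{t \to 0}(\psi^{-1}(H_s) \cap V_t)$ is then the restriction to $t=0$ of the saturation of $Z$ with respect to $t$; call the resulting family over the $s$-factor $W$. Since saturation only shrinks schemes, $W \subseteq Z|_{t=0} = \psi^{-1}(H) \cap V_0 =: Y$, and taking flat limits as $s \to 0$ preserves this containment, so the iterated limit $\tilde{W}|_{s=0}$ is a closed subscheme of $\tilde{Y}|_{s=0} = \lim_{s \to 0}(\psi^{-1}(H_s) \cap V_0)$. (Here $\tilde{W}$ and $\tilde{Y}$ denote the saturations with respect to $s$.)

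Next I will apply Lemma \ref{lem:generically-equal-limits} to the right-hand side $\tilde{Y}|_{s=0}$. The lemma's proof generalizes verbatim to the case where the hypersurface varies in a flat family rather than being fixed, since it uses only the flat-invariance of the class $[\psi^{-1}(H_s)] = [\psi^{-1}(H_0)]$. Applied to the trivially flat family $V_0 \times T \to T$ with varying hypersurface $H_s$, this shows $\tilde{Y}|_{s=0}$ is generically reduced of pure dimension $d-1$, set-theoretically equals $\psi^{-1}(H_0) \cap V_0$, and has cycle class $[\psi^{-1}(H_0)] \cdot [V_0]$.

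Then I will match cycle classes on the left-hand side. By flatness of $\tilde{W}$ over the $s$-base near $s=0$, $[\tilde{W}|_{s=0}] = [\tilde{W}|_s]$ for generic $s$, where $\tilde{W}|_s = \lim_{t \to 0}(\psi^{-1}(H_s) \cap V_t)$. Applying Lemma \ref{lem:generically-equal-limits} a second time---to the family $V \to T$ with fixed hypersurface $\psi^{-1}(H_s)$---yields $[\tilde{W}|_s] = [\psi^{-1}(H_s)] \cdot [V_0] = [\psi^{-1}(H_0)] \cdot [V_0]$, using flatness of $H$. Thus $\tilde{W}|_{s=0}$ and $\tilde{Y}|_{s=0}$ are both pure of dimension $d-1$ with equal cycle classes, and $\tilde{W}|_{s=0}$ is contained in $\tilde{Y}|_{s=0}$. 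Since $\tilde{Y}|_{s=0}$ is generically reduced, its cycle class is $\sum [Y_i]$, a multiplicity-free sum over its irreducible components, so the containment together with matching cycle classes forces $\tilde{W}|_{s=0}$ to contain each $Y_i$ with multiplicity $1$. This proves set-theoretic equality and generic reducedness of the iterated limit.

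The main obstacle in this plan is the second application of Lemma \ref{lem:generically-equal-limits}, used to compute $[\tilde{W}|_s]$ for generic $s$: it requires $\psi^{-1}(H_s) \cap V_0$ to be generically reduced of pure dimension $d-1$ for generic $s$, whereas the hypothesis of the current lemma only supplies this at $s=0$. To propagate this condition, I will apply Lemma \ref{lem:gen-reduced-fiber} to the family $Y \to T$; the ingredient I need is that $Y$ is flat over a neighborhood of $s=0$, which follows because the hypothesis $\dim(\psi^{-1}(H_0) \cap V_0) = d-1 < \dim V_0$ forces $\psi(V_0) \not\subseteq H_0$, so no irreducible component of $Y$ can be supported entirely over $s=0$.
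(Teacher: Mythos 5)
Your overall architecture is sound, and most of it is a (more elaborate) reorganization of the paper's argument: the paper instead propagates the hypothesis ``generically reduced of pure dimension $d-1$'' from $\psi^{-1}(H_0)\cap V_0$ to $\psi^{-1}(H_s)\cap V_0$ for $s$ near $0$, applies Lemma \ref{lem:generically-equal-limits} with $H_s$ fixed and $V_t$ varying for each such $s$, and then observes that since the inner limits and the schemes $\psi^{-1}(H_s)\cap V_0$ agree generically fiberwise, so do their limits as $s\to 0$; no cycle-class bookkeeping over the two-parameter base is needed. Your containment $\tilde W|_{s=0}\subseteq\tilde Y|_{s=0}$ via saturations, your remark that Lemma \ref{lem:generically-equal-limits} generalizes to a flat family of hypersurfaces, and your final step (which, like the paper's own proof of Lemma \ref{lem:generically-equal-limits}, implicitly uses that a nonzero effective cycle on a projective variety is not rationally equivalent to zero) are all fine.

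The one genuine flaw is the flatness claim you use to resolve what you call the main obstacle: ruling out \emph{irreducible components} of $Y=\psi^{-1}(H)\cap(V_0\times T)$ lying over $s=0$ does not give flatness of $Y$ near $s=0$. Flatness over a smooth curve is equivalent to the absence of \emph{associated} points (including embedded ones) in the fiber $s=0$, and $V_0$, being a flat limit, may be nonreduced with embedded components; the moving hypersurface can then create vertical embedded primes in $Y$ even though $\dim(\psi^{-1}(H_0)\cap V_0)=d-1$ and $Y_0$ is generically reduced. For a local model, take $V_0\times T=\operatorname{Spec} k[x,y,z,s]/(x^2,xy,xz)$ (a plane with an embedded point, so $d=2$) and the equation $y-s$: then $Y\cong\operatorname{Spec} k[x,z,s]/(x^2,xs,xz)$ has associated primes $(x)$ and $(x,z,s)$, so $x$ is $s$-torsion and $Y$ is not flat at $s=0$, while $Y_0\cong\operatorname{Spec} k[x,z]/(x^2,xz)$ is generically reduced of pure dimension $1=d-1$ and the only component of $Y$ dominates the $s$-line. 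The repair is the saturation device you already use elsewhere: apply Lemma \ref{lem:gen-reduced-fiber} not to $Y$ but to $\overline{Y|_{s\neq 0}}$, which is flat, proper, and pure of dimension $d$ (its fibers at $s\neq 0$ are pure of dimension $d-1$ by the semicontinuity-plus-Krull argument), and whose special fiber, being contained in $Y_0$ and pure of dimension $d-1$, is generically reduced; its generic fiber is $\psi^{-1}(H_s)\cap V_0$, which gives exactly the propagation you need. (Alternatively, note that the varying-hypersurface version of Lemma \ref{lem:generically-equal-limits} you invoke for the right-hand side already produces, inside its proof, generic reducedness of $\psi^{-1}(H_s)\cap V_0$ for generic $s$.) With that substitution your proof goes through.
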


\begin{proof}
Since $\psi^{-1}(H_0) \cap V_0$ is generically reduced and of the correct dimension, the same is true for $\psi^{-1}(H_s) \cap V_0$ by semicontinuity (as in the proof of Lemma \ref{lem:generically-equal-limits}). Applying Lemma \ref{lem:generically-equal-limits}, we see $\displaystyle{\lim_{t \to 0} \psi^{-1}(H_s) \cap V_t}$ is generically reduced and has the same underlying set as $\psi^{-1}(H_s) \cap V_0$. Therefore the limits of each as $s \to 0$ are again generically equal.
\end{proof}

Finally, flat limits are preserved by flat pullbacks:

\begin{lemma}\label{lem:flat-pullbacks}
Let $f : V \to W$ be a flat morphism of projective varieties. Let $X \subset W \times \specDVR$ be a subscheme. Then
\[f^{-1}\big(\lim_{t \to 0} X_t \big) = \lim_{t \to 0} \big( f^{-1}(X_t) \big).\]
\end{lemma}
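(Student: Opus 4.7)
The plan is to work affine-locally and translate the statement into an ideal-theoretic identity. Let $W = \mr{Spec}(A)$, $V = \mr{Spec}(B)$ be affine opens with $f$ corresponding to a flat ring map $A \to B$, and let $t$ be a uniformizer of $R$. Then $X \subset W \times \mr{Spec}(R)$ corresponds to an ideal $I \subseteq A \otimes R$, and as recalled just after the definition of flat limit in the paper, $\lim_{t\to 0} X_t$ is cut out by the saturation $(I : t^\infty) := \bigcup_n (I : t^n)$ reduced modulo $t$. The pullback $(f \times \mr{id})^{-1}(X)$ corresponds to the extended ideal $J := I \cdot (B \otimes R)$, so $\lim_{t \to 0} f^{-1}(X_t)$ is cut out by $(J : t^\infty)$ modulo $t$. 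The lemma therefore reduces to the ideal equality
\[\bigl(I \cdot (B \otimes R)\bigr) : t^\infty \ = \ (I : t^\infty) \cdot (B \otimes R).\]

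To establish this, I would use the short exact sequence
\[0 \to (I : t^n) / I \to (A \otimes R) / I \xrightarrow{\ \cdot t^n \ } (A \otimes R)/I,\]
which identifies $(I : t^n)/I$ with the $t^n$-torsion of $(A \otimes R)/I$. Since $A \to B$ is flat, flat base change gives that $A \otimes R \to B \otimes R$ is flat, so tensoring the sequence with $B \otimes R$ over $A \otimes R$ preserves exactness. The middle and right terms become $(B \otimes R)/J$, and flatness identifies the leftmost term with $(I : t^n) \cdot (B \otimes R) / J$. This yields $(J : t^n)/J = (I : t^n) \cdot (B \otimes R) / J$; taking unions over $n \ge 0$ and reducing modulo $t$ produces the desired equality.

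The only real point to watch is that flatness of $f$ is what guarantees the $t^n$-torsion submodule is preserved under extension of scalars; without it, $J$ could acquire extra embedded components along the special fiber, and the identity would fail. The globalization step is routine: both sides of $f^{-1}(\lim_{t \to 0} X_t) = \lim_{t \to 0} f^{-1}(X_t)$ are closed subschemes of $V$, so it suffices to verify the equality on an affine cover of $V$ (and a corresponding cover of $W$), where it reduces to the ideal computation above. I do not anticipate a serious obstacle beyond unpacking the flat-base-change step carefully.
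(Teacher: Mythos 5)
Your proof is correct, and at bottom it rests on the same fact as the paper's: that scheme-theoretic closure over the punctured base (equivalently, saturation of the ideal with respect to $t$) commutes with flat pullback. The difference is one of packaging: the paper simply notes that $f^{-1}(X|_{t\neq 0}) = f^{-1}(X)|_{t\neq 0}$ and then invokes ``flat pullback preserves closures'' as a known fact, whereas you verify that fact affine-locally, identifying $(I:t^n)/I$ with the $t^n$-torsion of $(A\otimes R)/I$ and using flat base change of $A \to B$ to conclude $(I\cdot(B\otimes R)):t^\infty = (I:t^\infty)\cdot(B\otimes R)$ before reducing mod $t$. Your route is longer but self-contained, and it makes explicit exactly where flatness enters (preservation of the $t$-power torsion, i.e.\ no new components supported on the special fiber appear after pullback); the paper's route is shorter at the cost of quoting the general preservation-of-closure statement. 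Both arguments globalize in the routine way you indicate, so there is no gap.
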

\begin{proof}
We have $f^{-1}(X|_{\specDVR \setminus 0}) = f^{-1}(X)|_{\specDVR \setminus 0}$. Flat pullback preserves closures, so
\[f^{-1}(\overline{X|_{\specDVR \setminus 0}}) = \overline{f^{-1}(X)|_{\specDVR \setminus 0}}.
\]
Setting $t=0$ gives
\[f^{-1}\big(\lim_{t \to 0} X_t \big) = \lim_{t \to 0} \big( f^{-1}(X)_t \big) = \lim_{t \to 0} \big( f^{-1}(X_t) \big). \qedhere\]
\end{proof}

\subsection{Limits of intersections}\label{sec:hyperplanes}

Let the $i$th factor of $\mathbb{P}^n$ in the product $\mathbb{P}^n\times \cdots \times \mathbb{P}^n$ have coordinates $[z_b :z_c:z_1: \cdots : \widehat{z_i} : \cdots : z_n]$, and let $\PP^i$ have coordinates $[w_b:w_c:w_1:\cdots:w_{i-1}]$ as in Section \ref{sec:background}. Recall from the introduction that we define
\begin{align}
\hyp_{i}^\psi(t) &= \mathbb{V}(z_b + t z_c + t^2 z_1 + \cdots + t^{i}z_{i-1} + t^{i+1}z_{i+1} + \cdots + t^n z_n), \label{eq:H-psi}\\
\hyp_i^\omega(t) &= \mathbb{V}(w_b + t w_c + t^2 w_1 + \cdots + t^{i}w_{i-1}),
\end{align}

We first examine the limit of a single hyperplane section of a stratum.  Let $\psi_i$ be the $i$-th Kapranov map $\Mbar_{0,S}\to \PP^n$.

\begin{StrataPsi}
  Let $T$ be a stable tree.
  Let $V_i(t)=|\psi_i|^{-1}(\hyp_i^\psi(t))$ in $\Mbar_{0,n+3}$. Then the limiting fiber is given by
  $$\lim_{t\to 0}(V_i(t)\cap X_T)=\bigcup_{T'\in\slide_i(T)} X_{T'},$$
  and it is reduced.
\end{StrataPsi}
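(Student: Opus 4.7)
The plan is to compute the flat limit on each stratum $X_T$ by a direct leading-order analysis of the hyperplane equation, then identify the resulting zero scheme with the reduced union $\bigcup_{T'\in\slide_i(T)} X_{T'}$ via the standard $\psi_i$ boundary formula \eqref{eq:psi-boundary}. Let $\sigma$ be the branch partition of $S\setminus\{i\}$ induced by $T$ at $i$, with $a\in\sigma_0$. By Proposition \ref{lem:kap-arbitrary-tree}, the map $|\psi_i|$ restricted to $X_T$ factors through the projection $X_T\to \Mbar_{0,N(v_i)}$ and then through $P_\sigma\subseteq\PP^n$ via $\iota_\sigma$. Substituting $z_r = y_{\sigma(r)}$ (with the convention $y_0 := 0$) into the equation of $\hyp_i^\psi(t)$, the family $V_i(t)\cap X_T$ is cut out by the section
\[
s_t \,=\, \sum_{\ell\ge 1} y_\ell\, f_\ell(t), \qquad f_\ell(t) := \sum_{r\in\sigma_\ell} t^{e(r)},
\]
of the line bundle $\mathbb{L}_i|_{X_T}$, where $e(r)\in\{0,\ldots,n\}$ is the exponent of $t$ appearing in front of $z_r$ in Definition \ref{def:hyperplanes}. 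Since $r\mapsto e(r)$ is order-preserving on $(S\setminus\{i,a\},\,b<c<1<\cdots<n)$, the leading term of $f_\ell(t)$ is $t^{e(m_\ell)}$ with $m_\ell := \min\sigma_\ell$, and the smallest such exponent is attained uniquely at $\ell^* := \sigma(m)$, where $m$ is the $i$-minimal marked point of Definition \ref{def:slide-rule}.

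To extract the flat limit, I rescale: the section $\tilde s_t := t^{-e(m)}\,s_t$ is polynomial in $t$, defines the same divisor as $s_t$ for $t\ne 0$, and has nonzero limit $y_{\ell^*}$ at $t=0$. Because $X_T\cong \prod_v \Mbar_{0, N(v)}$ is irreducible and $y_{\ell^*}$ does not vanish identically on it, $\tilde s_t$ is a non-zero-divisor on $X_T\times\bA^1$; hence $\{\tilde s_t=0\}$ is flat over $\bA^1$, and its fiber at $t=0$ is the flat limit of $V_i(t)\cap X_T$, equal to the zero scheme of $y_{\ell^*}$ on $X_T$, i.e.\ to $|\psi_i|^{-1}(\{z_m=0\})\cap X_T$.

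Next I identify this with the $\slide_i$ strata. By the formula \eqref{eq:psi-boundary} applied on the smaller moduli space, the section $y_{\ell^*}$ on $\Mbar_{0,N(v_i)}$ has zero divisor equal to the reduced sum $\sum_\star D(i\star\mid am\star^c)$ over nonempty $\star\subseteq N(v_i)\setminus\{i,a,m\}$ (reducedness follows from Kapranov's description of $\Mbar_{0,N(v_i)}$ as an iterated blow-up of $\PP^{\deg(v_i)-2}$, under which the coordinate hyperplane $\{y_{\ell^*}=0\}$ pulls back to this specific reduced divisor). Reducedness is then preserved by the smooth projection $X_T\to\Mbar_{0,N(v_i)}$. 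Restricting to $X_T$, the components meeting $X_T$ in codimension one correspond to $\star$'s that are unions of branches of $T$ at $v_i$ other than $\mathrm{Br}_a$ and $\mathrm{Br}_m$; by Remark \ref{rmk:slide-contraction}, this is precisely the data of an $i$-slide, giving a bijection with the trees $T'\in\slide_i(T)$.

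The main obstacle is justifying the scheme-theoretic reducedness of $\{y_{\ell^*}=0\}$ on $\Mbar_{0,N(v_i)}$, which is slightly stronger than the cycle-class statement \eqref{eq:psi-boundary} and relies on Kapranov's explicit blow-up construction. As a cross-check, the cycle class $[V_i(t)\cap X_T]=\psi_i\cdot[X_T]$ is constant in $t$ by flatness and equals $\sum_{T'\in\slide_i(T)}[X_{T'}]$ by \eqref{eq:psi-boundary}; combined with the set-theoretic identification from the preceding paragraph, this forces each component of the limit to appear with multiplicity one, confirming the asserted reducedness.
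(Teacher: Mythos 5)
Your proposal is correct and follows essentially the same route as the paper: restrict $|\psi_i|$ to $X_T$ via $\iota_\sigma$, do a leading-order analysis in $t$ to see that saturation yields the limiting equation $z_m=0$ (your rescaling/non-zero-divisor argument is just a more explicit version of the paper's saturation step), and then identify $\{y_m=0\}$ with the reduced union of the $\slide_i(T)$ strata via the standard boundary description of $\psi$-divisors. Your added justifications of flatness and of reducedness (Kapranov blow-up, plus the multiplicity cross-check against \eqref{eq:psi-boundary}) are sound refinements of points the paper asserts more briefly.
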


\begin{proof}
Let $v_i \in T$ be the node to which $i$ is attached. Let $\partition$ be the set partition corresponding to $T \setminus \{v_i, i\}$ (given by the branches at $v_i$) and let $P_\partition \subseteq \mathbb{P}^n$ be the corresponding linear space. We have the diagram below:
\begin{equation} \label{eq:psi-boundary-diagram}
\begin{tikzcd}
\Mbar_{0,S} \arrow[r,"\kapi"] & \mathbb{P}^n \\
X_T \arrow[u,hook] \arrow[r,"\kapi"] & \arrow[u,hook,"\iota_\partition"] \mathbb{P}^{k-2}
\end{tikzcd}
\end{equation}
Recall from Equation \eqref{eq:stratum-factors} that $X_T$ is isomorphic to a product of $\Mbar_{0,n'}$'s.
This isomorphism identifies $|\psi_i|$ with the corresponding divisor pulled back from the factor $\Mbar_{0,\deg(v_i)}$, on which one marked point is identified with $i$ and the others correspond canonically to the parts of $\partition$. The bottom horizontal arrow in \eqref{eq:psi-boundary-diagram} is the composition $X_T \to \Mbar_{0,\deg(v_i)} \xrightarrow{|\psi_i|} \mb{P}^{k-2}$.

We calculate directly in projective coordinates.
By Lemma \ref{lem:kap-arbitrary-tree}, $P_\partition$ is given by the equations $z_j = z_k$ whenever $j, k$ are in the same part of $\partition$ and $z_j = 0$ if $j$ is in the same part as $a$. Setting $m$ to be the $i$-minimal marked point of $T$, 
it follows that on $P_\partition$, the Equation \eqref{eq:H-psi} defining $\hyp_i^\psi(t)$ reduces to
  \[0 = t^{m+1} z_{m} + O(t^{m+2})\] if $m<i$, or \[0 = t^{m} z_{m} + O(t^{m+1})\] if $m>i$. In either case, saturating with respect to $t$ and setting $t = 0$ gives the limiting equation $z_{m} = 0$, or simply $y_m = 0$ where $y_m$ indexes the corresponding part of $\partition$.   
  
  The isomorphism \eqref{eq:stratum-factors} identifies the subscheme $\mathbb{V}(y_m)$ with the corresponding $\psi_m$ divisor on the factor $\Mbar_{0,\deg(v_i)}$.  Thus $y_m = 0$ cuts out the reduced union of divisors that have a node (i.e., an edge of the dual graph) separating marked point $i$ from both the marked points $a$ and $m$.  

Back on $X_T$, these divisors correspond to dual graphs $T'$ with a new edge $e$ separating $i$ from $a$ and $m$, such that contracting $e$  results in the original tree $T$ (since $X_{T'}\subseteq X_T$).  By Remark \ref{rmk:slide-contraction}, these are precisely the strata $X_{T'}$ enumerated by $\slide_i(T)$.
\end{proof}

\begin{remark}\label{rmk:simpler-hyperplanes}
In many cases, we can replace $H^\psi_i(t)$ by a simpler equation (by removing some terms) and still get the same result as in Theorem \ref{thm:strata-psi}. In particular, the proof above holds for any hyperplane obtained by deleting entries corresponding to marked points that appear on the branch $A$ of $T$, since those coefficients restrict to $0$ on $X_T$.  

Moreover, if we know the $i$-minimal element $m$ in advance, we can also delete any other summands other than the $x_m$ term in order to slide the $m$ branch towards $a$.  
\end{remark}

\begin{remark}\label{rmk:general-hyperplanes}
 Besides taking subsets of the summands as in the above remark, we can reorder the subscripts on the variables in a hyperplane equation, which results in a modified slide rule. For instance, intersecting $X_T$ with the hyperplane $$z_1+tz_b+t^2z_4+t^3z_2=0$$ applies an $i$-slide in which you look for the branch containing the first among $1,b,4,2$ in that order (so we consider $1$ ``smaller'' than $b$ and so on) and slide that branch away, rather than the $i$-minimal branch as defined above. 
\end{remark}

We now consider arbitrary complete intersections. Recall the following definition from the introduction.

\begin{defHyp}
Let $\mathbf{k}=(k_1,\ldots,k_n)$ be a weak composition.  Let $\vec{t}=(t_{i,j})$ for $1\le i\le n$ and $1\le j \le k_i$ be a tuple of complex parameters.
We denote the subschemes cut out in $\Mbar_{0,n+3}$ by the hyperplanes $\hyp_i^\psi(t_{i,j})$ and $\hyp_i^\omega(t_{i,j})$ as
\begin{align}
V^\psi(\mathbf{k};\vec{t}) = \bigcap_{i=1}^n \bigcap_{j=1}^{k_i} \Psi_n^{-1}(\hyp_i^\psi(t_{i,j})),\\
V^\omega(\mathbf{k};\vec{t}) = \bigcap_{i=1}^n \bigcap_{j=1}^{k_i} \Omega_n^{-1}(\hyp_i^\omega(t_{i,j})),
\end{align}
where $\Psi_n$ is the total Kapranov map and $\emb_n$ is the iterated Kapranov embedding.
\end{defHyp}

\begin{remark}[Monin--Rana's equations for $\Omega_n$]
In order to find the hyperplane equations in Definition \ref{def:all-hyperplanes}, we wrote Mathematica code that used the explicit (conjectural) equations cutting out the embedding $\Omega_n$, due to Leonid Monin and Julie Rana in \cite{MonRan}.  This was an essential tool for experimenting with equations and testing conjectures.
\end{remark}

\begin{example}
For $\mathbf{k}=(1,0,2)$, let $\PP^3\times \PP^3 \times \PP^3$ have coordinates $[x_b:x_c:x_2:x_3]\times [y_b:y_c:y_1:y_3]\times [z_b:z_c:z_1:z_2]$, and let $\PP^1\times \PP^2\times \PP^3$ have coordinates $[x_b:x_c]\times [y_b:y_c:y_1]\times [z_b:z_c:z_1:z_2]$.  Then $V^\psi((1,0,2);\vec{t})$ is defined by the equations \begin{align}
    0&=x_b+t_{1,1}x_c+t_{1,1}^2x_2+t_{1,1}^3x_3, \\ 0&=z_b+t_{3,1}z_c+t_{3,1}^2z_1+t_{3,1}^3z_2, \\ 
    0&=z_b + t_{3,2}z_c+t_{3,2}^2z_1+t_{3,2}^3z_2,
\end{align}
whereas $V^\omega((1,0,2),\vec{t})$ is defined by the equations
\begin{align}
0 &= x_b + t_{1,1}x_c,\\
0 &= z_b + t_{3,1}z_c+t_{3,1}^2z_1+t_{3,1}^3z_2, \\
0 &= z_b + t_{3,2}z_c+t_{3,2}^2z_1+t_{3,2}^3z_2.
\end{align}
\end{example}

\begin{MainThm}
Let $\mathbf{k}$ be a weak composition, and let $\vec{t} = (t_{i, j})$ for $1 \leq i \leq n$ and $1 \leq j \leq k_i$ be complex parameters.
Let $\displaystyle{\lim_{\vec{t} \to \vec{0}}}$ denote the iterated limit
\[\lim_{\vec{t} \to \vec{0}} \big( {-} \big) :=
\lim_{t_{n, k_n} \to 0} \cdots \lim_{t_{n, 1} \to 0} \cdots \cdots
\lim_{t_{2, k_2} \to 0} \cdots \lim_{t_{2, 1} \to 0}\ 
\lim_{t_{1, k_1} \to 0} \cdots \lim_{t_{1, 1} \to 0} \big( {-} \big).\]
(The $i$-th block is empty if $k_i=0$, and $\lim$ denotes the flat limit.) Then we have set-theoretically
\begin{equation}
    \lim_{\vec{t} \to \vec{0}} V^\psi(\mathbf{k};\vec{t}) = \bigcup_{T \in \Slide^\psi(\mathbf{k})} X_T \quad \text{ and } \quad \lim_{\vec{t} \to \vec{0}} V^\omega(\mathbf{k};\vec{t}) = \bigcup_{T \in \Slide^\omega(\mathbf{k})} X_T.
    \end{equation}
Moreover, each boundary stratum $X_T$ appearing in the union is an irreducible component and is generically reduced in the limit.
\end{MainThm}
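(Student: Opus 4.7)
The plan is to prove both statements simultaneously by induction on the total number of hyperplanes in the intersection, applied one at a time in the order determined by the iterated limit. The inductive invariant will be that the current flat limit is, set-theoretically and generically reduced, a disjoint union of boundary strata $\bigcup_{T \in S_m} X_T$ whose indexing set $S_m$ records the current state of the slide-rule algorithm. The inductive step applies Lemma \ref{thm:strata-psi} stratum-by-stratum to perform the next slide, Lemma \ref{lem:generically-equal-limits-iterated} to commute the new flat limit through the previously-taken ones without disturbing the generic structure, and Lemma \ref{lem:distinct-insertions} to ensure that contributions from distinct source strata remain disjoint.

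For the $\psi$ case, I start with $W_0 = \Mbar_{0,n+3} = X_{\abcntree}$ and list the hyperplanes in the order in which their limits are taken (innermost first). Assuming $W_m$ is generically reduced and set-theoretically equal to $\bigcup_{T \in S_m} X_T$, Lemma \ref{thm:strata-psi} says the limit of the next hyperplane section on each individual stratum $X_T$ equals $\bigcup_{T' \in \slide_{i}(T)} X_{T'}$, reduced and of pure codimension one. This is exactly the hypothesis of Lemma \ref{lem:generically-equal-limits-iterated}, so the new iterated limit $W_{m+1}$ remains generically reduced and is set-theoretically equal to $\bigcup_{T \in S_m} \bigcup_{T' \in \slide_i(T)} X_{T'}$. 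Injectivity of $\slide_i$ across distinct source trees then identifies $W_{m+1}$ with $\bigcup_{T' \in S_{m+1}} X_{T'}$, maintaining the invariant. Iterating to exhaustion reproduces the generative description of $\Slide^\psi(\mathbf{k})$ in Definition \ref{def:slide-psi}.

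For the $\omega$ case, the key is the factorization $\Omega_n^{-1}(\hyp_i^\omega(t)) = \pi_i^{-1}(|\psi_i|^{-1}(\hyp_i^\omega(t)))$, where $\pi_i : \Mbar_{0,n+3} \to \Mbar_{0,i+3}$ is the (flat) forgetting map for $i+1,\ldots,n$. By Lemma \ref{lem:flat-pullbacks}, each batch of $\omega_i$-hyperplane limits can be computed on $\Mbar_{0,i+3}$, where $|\omega_i|$ agrees with $|\psi_i|$ and the $\psi$-case analysis applies verbatim via Lemma \ref{thm:strata-psi}. At the transition from batch $i-1$ to batch $i$, the factorization $\pi_{i-1} = \pi_{i-1}' \circ \pi_i$ with $\pi_{i-1}' : \Mbar_{0,i+3} \to \Mbar_{0,i+2}$ forgetting $i$ lets me rewrite the current union as $\pi_i^{-1}(\tilde{W})$ for some union $\tilde{W}$ of strata on $\Mbar_{0,i+3}$; its top-dimensional components are exactly the strata indexed by trees obtained from the previous tree set by inserting $i$ at every internal vertex, which realizes the combinatorial operator $\pi_i^{-1}$ of Definition \ref{def:slide-omega}. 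Lower-dimensional strata arising from subdividing edges do not contribute irreducible components and are swallowed by closure. With this translation, the same induction as in the $\psi$ case delivers $\bigcup_{T \in \Slide^\omega(\mathbf{k})} X_T$.

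The main technical obstacle is preserving generic reducedness and purity of codimension through the iterated flat limits. This is precisely the role of Lemma \ref{lem:generically-equal-limits-iterated}: its hypothesis is supplied by Lemma \ref{thm:strata-psi} on each stratum, and its conclusion feeds directly into the next round of induction. Injectivity (Lemma \ref{lem:distinct-insertions}) is essential, not a formality; without it, two distinct strata $X_T$ and $X_{T''}$ in $W_m$ could collide on a common slide target $X_{T'}$, introducing an embedded multiplicity and derailing the inductive generic reducedness. Once generic reducedness is maintained, the irreducible-component assertion for each $X_T$ is automatic from its irreducibility together with the correct pure codimension of the limit.
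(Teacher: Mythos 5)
Your proposal is correct and follows essentially the same route as the paper's proof: the same three ingredients (Lemma \ref{thm:strata-psi} applied stratum-by-stratum, Lemma \ref{lem:generically-equal-limits-iterated} to commute each hyperplane past the earlier flat limits, and the injectivity Lemma \ref{lem:distinct-insertions} for multiplicity-freeness), with Lemma \ref{lem:flat-pullbacks} handling the forgetting maps in the $\omega$ case. The only difference is bookkeeping --- you run a forward induction one hyperplane at a time, while the paper inducts on $n$ and $\sum k_i$ by peeling off the outermost limit and the $k_n=0$ pullback step --- which does not change the substance of the argument.
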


\begin{proof}
We first consider the $\omega$ case.  We proceed by induction on $n$, then on $\sum k_i$. The case $n=3$ is trivial, as is the case $\sum k_i = 0$.
  
Let $n\ge 3$ and let $\mathbf{k}$ be a weak composition with $\sum k_i\le n$, and assume the statement holds for all smaller $n$ and $\sum k_i$.  Suppose first that $k_n = 0$. Let $\mathbf{k}' = (k_1, \ldots, k_{n-1})$. In this case we have
\[V^\omega(\mathbf{k}; \vec{t}) = \pi_n^{-1} V^\omega(\mathbf{k}', \vec{t}).\]
Flat limits are preserved by flat pullback 
(Lemma \ref{lem:flat-pullbacks}) and $\pi_n$ is flat, so
\[\lim_{\vec{t} \to \vec{0}} V^{\omega}(\mathbf{k}; \vec{t}) = \pi_n^{-1} \Big( \lim_{\vec{t} \to \vec{0}} V^{\omega}(\mathbf{k}'; \vec{t}) \Big).\]
By the induction hypothesis, the right-hand limit is the generically reduced union of boundary strata corresponding to the trees in $\Slide^{\omega}(\mathbf{k}')$.  The preimage $\pi_n^{-1}(X_T)$ of a stratum (with generically reduced scheme structure) is again generically reduced, and is the union of strata $X_{T'}$ formed by inserting the $n$-th marked point into $T$ in all possible ways.  This matches the combinatorial process of the $\omega$-slide algorithm at step $n$ when $k_n=0$, so we obtain the strata corresponding to $\Slide^{\omega}(\mathbf{k})$.

Suppose instead $k_n > 0$. Let $\mathbf{k}'' = (k_1, \ldots, k_n-1)$ and let $\vec{t}''$ denote $\vec{t}$ without $t_{n,k_n}$. By the induction hypothesis, we have
\begin{equation}
    \label{eq:Z}Z = \lim_{\vec{t}'' \to \vec{0}} V^{\omega}(\mathbf{k}''; \vec{t}'') = \bigcup_{T''\in \Slide^{\omega}(\mathbf{k}'')}X_{T''}
\end{equation}
with generically reduced scheme structure on each irreducible component. We now examine the final intersection and limit, and we have 
\begin{align}
\lim_{\vec{t} \to \vec{0}} V^\omega(\mathbf{k}; \vec{t}) &=\lim_{t_{n, k_n} \to 0} \Big( \lim_{\vec{t}'' \to \vec{0}} V^\omega(\mathbf{k}; \vec{t}) \Big) \label{eq:line1} \\ 
&=\lim_{t_{n, k_n} \to 0} \Big( \lim_{\vec{t}'' \to \vec{0}} \Omega_n^{-1}(\hyp_{n}^{\omega}(t_{n, k_n})) \cap V^\omega(\mathbf{k}''; \vec{t}'') \Big) \\
\intertext{Moving all the inner limits inwards then gives}
&\subseteq \lim_{t_{n, k_n} \to 0} \Big( \Omega_{n}^{-1}(\hyp_{n}^{\omega}(t_{n, k_n})) \cap \lim_{\vec{t}'' \to \vec{0}} V^\omega(\mathbf{k}''; \vec{t}'') \Big) \\
&= \lim_{t_{n, k_n} \to 0} \Big( V_n(t_{n,k_n}) \cap Z\Big)\label{eq:line4}
\end{align}
where $V_n(t)=\Omega_n^{-1}(H^\omega_n(t))=|\psi_n|^{-1}(H^{\omega}_n(t))$ as in Theorem \ref{thm:strata-psi} (since the top degree part of the $\Omega_n$ embedding simply agrees with the Kapranov map $|\psi_n|$). We will show that the right-hand side of \eqref{eq:line4} is generically reduced and of the correct dimension. Therefore, by Lemma \ref{lem:generically-equal-limits-iterated}, the left-hand side of \eqref{eq:line1} is also generically reduced and agrees set-theoretically with the right-hand side \eqref{eq:line4}. 

To examine the right-hand side of \eqref{eq:line4}, consider an irreducible component $X_{T''} \subset Z$, where $T'' \in \Slide^\omega(\mathbf{k}'')$ by Equation \eqref{eq:Z}. By Theorem \ref{thm:strata-psi}, 
\[ \lim_{t_{n, k_n} \to 0} V_n(t_{n,k_n}) \cap X_{T''} = \bigcup_{T \in \slide_{n}(T'')} X_{T}\]
with reduced scheme structure.  By Lemma \ref{lem:distinct-insertions} (injectivity of the slide rule), as $T''$ varies, the sets $\slide_n(T'')$ are disjoint, so each resulting stratum $X_T$ occurs exactly once. We thus have set-theoretically
\[\lim_{t_{n, k_n} \to 0} V_n(t_{n,k_n}) \cap Z = \bigcup_{T'' \in \Slide^\omega(\mathbf{k}'')} \bigg(
\bigcup_{T \in \slide_n(T'')}
X_T \bigg) = \bigcup_{T \in \Slide^\omega(\mathbf{k})} X_T,
\]
where each $X_T$ occurs with multiplicity one, i.e. has generically reduced scheme structure, and the last equality is by the definition of the $\omega$-slide rule. This completes the proof for $\omega^\mathbf{k}$.

The argument for $V^\psi(\mathbf{k}; \vec{t})$ and  $\Slide^\psi(\mathbf{k})$ is similar, but takes place entirely in $\Mbar_{0, n+3}$ (without pullbacks). Thus we can, in particular, skip the $k_n=0$ case; let $i$ be largest such that $k_i > 0$. Then the argument is identical to the case $k_n > 0$ for $V^\omega(\mathbf{k}; \vec{t})$, except $\hyp_{n}^\omega(t_{n, k_n})$ is replaced by $\hyp_{i}^\psi(t_{i, k_i})$, and accordingly $\slide_{n}(T'')$ is replaced by $\slide_{i}(T'')$.
\end{proof}

\begin{remark}\label{rmk:flat-families}
It follows from the iterated limit calculation that the parameters $t_{i,j}$ can be replaced, without changing the limit, by powers $t_{i,j} = t^{m_{i,j}}$ of a single parameter $t \to 0$, for some exponents $m_{n,k_n} \gg \cdots \gg m_{n,1} \gg \cdots \gg m_{1,1} \gg 0$. This produces a flat family over $\mathbb{P}^1$.
\end{remark}

As a consequence, we obtain:

\begin{MainCor}
Let $\mathbf{k}$ be a weak composition. Then in $A^\bullet(\Mbar_{0,n+3})$ we have
\begin{equation}\label{eq:maincor}
\psi^\mathbf{k} = \sum_{T \in \Slide^\psi(\mathbf{k})} [X_T], \qquad 
\omega^\mathbf{k} = \sum_{T \in \Slide^\omega(\mathbf{k})} [X_T].
\end{equation}
\end{MainCor}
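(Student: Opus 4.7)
The plan is to deduce Corollary \ref{cor:main} directly from Theorem \ref{thm:main}, by combining the specialization principle for flat families with the observation that the generic fiber of our hyperplane family is a generically reduced complete intersection representing the target product class. We discuss only the $\psi$ case; the $\omega$ case is identical after substituting $\Omega_n$ and $\hyp_i^\omega$ for $\Psi_n$ and $\hyp_i^\psi$ throughout.

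First, by Remark \ref{rmk:flat-families}, reparametrize via $t_{i,j} = t^{m_{i,j}}$ for suitably fast-growing exponents, so that the iterated flat limit becomes the flat limit of a single one-parameter family, and let $\mathcal{V} \subseteq \Mbar_{0,n+3} \times \mathbb{P}^1$ be the resulting flat family. Its general fiber is $V^\psi(\mathbf{k}; \vec{t})$ for generic parameter values, while its fiber over $t = 0$ is the flat limit described in Theorem \ref{thm:main}. That theorem says the special fiber is set-theoretically $\bigcup_{T \in \Slide^\psi(\mathbf{k})} X_T$ with each $X_T$ an irreducible component of multiplicity one, so the fundamental cycle class of the special fiber is exactly $\sum_{T \in \Slide^\psi(\mathbf{k})} [X_T]$.

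It remains to identify the cycle class of the generic fiber with $\psi^\mathbf{k}$. By Lemma \ref{lem:gen-reduced-fiber}, generic reducedness of the special fiber propagates to the generic fiber; moreover flatness forces it to have pure codimension $\sum k_i$. Since the generic fiber is the scheme-theoretic intersection of $\sum k_i$ Cartier divisors pulled back from the hyperplanes $\hyp_i^\psi(t_{i,j})$ under $\Psi_n$, and each such pullback represents the class $\psi_i$ (because $|\psi_i|^*\mathcal{O}_{\mathbb{P}^n}(1)$ has first Chern class $\psi_i$), an iterated application of Proposition \ref{prop:gen-reduced-intersection} identifies its cycle class with $\psi^\mathbf{k}$. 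Since the fibers of a flat family over $\mathbb{P}^1$ are rationally equivalent as cycles in $\Mbar_{0,n+3}$, we conclude that $\psi^\mathbf{k} = \sum_{T \in \Slide^\psi(\mathbf{k})} [X_T]$ in $A^\bullet(\Mbar_{0,n+3})$.

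The main subtlety is that the iterated use of Proposition \ref{prop:gen-reduced-intersection} for the generic fiber formally requires each intermediate partial intersection to be generically reduced of the expected codimension. This can either be absorbed into the induction already used to prove Theorem \ref{thm:main} (where at each step the intermediate flat family is shown to have generically reduced fibers of the correct dimension, so the same induction yields the class identification for the generic fiber), or sidestepped by invoking the standard fact that a generically reduced complete intersection of the expected codimension in a smooth variety has cycle class equal to the product of the hypersurface classes. Either way, all essential geometric content is already contained in Theorem \ref{thm:main}, and the corollary is a formal consequence.
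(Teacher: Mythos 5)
Your proposal is correct and is essentially the argument the paper leaves implicit: the class of the limit is identified with $\psi^{\mathbf{k}}$ (resp.\ $\omega^{\mathbf{k}}$) by conservation of rational equivalence under flat specialization, and Theorem \ref{thm:main} makes the limit a multiplicity-free union of the strata $X_T$. The only cosmetic difference is that you collapse to a one-parameter family via Remark \ref{rmk:flat-families} and recompute the general fiber's class as a generically reduced complete intersection, whereas in the paper this bookkeeping is already built into the inductive proof of Theorem \ref{thm:main} through the equalities \eqref{eq:zero-eq-prod}--\eqref{eq:lim-eq-prod} of Lemma \ref{lem:generically-equal-limits}, which multiply the class by $[\Psi_n^{-1}(\hyp_i)]$ at each step, so no separate analysis of the generic fiber is needed.
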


\begin{example}\label{ex:psi-omega-prod-formula}
By Theorem~\ref{thm:main} and Examples \ref{ex:slide-psi} and \ref{ex:slide-omega}, we have (using the same notation as in those examples) that
\[
\psi_1\psi_3^2 = [X_{T_1}]+[X_{T_2}]+[X_{T_3}] \hspace{1cm}\text{ and }\hspace{1cm} \omega_1\omega_3^2= [X_{T_1}]+[X_{T_3}].
\]
\end{example}

\subsection{Application to \texorpdfstring{$\kappa$}{} classes}

We prove Theorems \ref{thm:kappa} and \ref{thm:generalized-kappa} on kappa classes and generalized kappa classes,
\begin{align*}
\kappa_i &:= (\pi_{n+1})_*(\psi_{n+1}^{i+1}), \\
R_{n; \mathbf{r}} &:= (\pi_{n+1, \ldots, n+m})_*(\psi_{n+1}^{r_1} \cdots \psi_{n+m}^{r_m}).
\end{align*}
We recall the relevant sets of trees:

\begin{itemize}
    \item For $n$ and $i$, the set $K(n, i) \subseteq \Slide^\psi(0^n, i+1)$ consists of the trees $T$ for which $\deg(v_{n+1}) = 3$.
    \item For $n$ and a composition $\mathbf{r} = (r_1, \ldots, r_m)$, the set $R(n;\mathbf{r}) \subseteq \Slide^\psi(0^n, r_1, \ldots, r_m)$ consists of the trees $T$ such that, for each $n+1 \leq j \leq n+m$, the tree $\pi_{j+1, \ldots, n+m}(T)$ has $\deg(v_j) = 3$.
\end{itemize}
We show:

\begin{thm}
 For all $n$ and $i$ and $\mathbf{r}$,
 \[\kappa_i = \sum_{T \in K(n,i)} [X_{\pi_{n+1}(T)}], \qquad R_{n; \mathbf{r}} = \sum_{T \in R(n; \mathbf{r})} [X_{\pi_{n+1, \ldots, n+m}(T)}].\]
\end{thm}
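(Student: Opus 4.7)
The plan is to combine Corollary \ref{cor:main} with a standard pushforward formula for boundary strata under forgetting maps. The overall strategy is: express $\psi_{n+1}^{r_1} \cdots \psi_{n+m}^{r_m}$ (or $\psi_{n+1}^{i+1}$) as a sum of boundary strata via Corollary \ref{cor:main}, then push forward this sum and identify which strata survive. The key single-step lemma I would first establish is: for a forgetting map $\pi_j$ and a stable tree $T$,
\[
(\pi_j)_*[X_T] = \begin{cases} [X_{\pi_j(T)}] & \text{if } \deg(v_j) = 3 \text{ in } T, \\ 0 & \text{if } \deg(v_j) > 3 \text{ in } T. \end{cases}
\]
Using the product decomposition \eqref{eq:stratum-factors}, the restriction $\pi_j|_{X_T}$ acts as the identity on all factors except $\Mbar_{0, \deg(v_j)}$, where it is the forgetting map $\Mbar_{0, \deg(v_j)} \to \Mbar_{0, \deg(v_j) - 1}$. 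When $\deg(v_j) = 3$, the component carrying $j$ becomes unstable and is contracted, so the map on this factor is an isomorphism and the target tree is precisely $\pi_j(T)$ as defined combinatorially. When $\deg(v_j) > 3$, this forgetting map is the universal curve with $1$-dimensional fibers, so the restricted map $X_T \to X_{\pi_j(T)}$ has $1$-dimensional fibers and its pushforward on fundamental classes vanishes on dimension grounds.

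The formula for $\kappa_i$ is then immediate: by Corollary \ref{cor:main}, $\psi_{n+1}^{i+1} = \sum_{T \in \Slide^\psi(0^n,\, i+1)} [X_T]$, and applying $(\pi_{n+1})_*$ term-by-term via the lemma leaves exactly the terms indexed by $K(n; i)$.

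For the generalized kappa classes, I would iterate the lemma. Since $\pi_{n+1, \ldots, n+m} = \pi_{n+1} \circ \pi_{n+2} \circ \cdots \circ \pi_{n+m}$, the pushforwards compose in the order $j = n+m, n+m-1, \ldots, n+1$. Starting from Corollary \ref{cor:main} applied to $\psi_{n+1}^{r_1} \cdots \psi_{n+m}^{r_m}$, at step $j$ the term for $T$ survives iff $\deg(v_j) = 3$ in the partially-forgotten tree $\pi_{j+1, \ldots, n+m}(T)$. Trees satisfying all of these conditions simultaneously are exactly the set $R(n; \mathbf{r})$, yielding the stated formula.

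The main technical point is the single-step pushforward lemma, particularly checking compatibility of the forgetting map with the product decomposition of $X_T$; once this is set up, the remaining steps are direct combinatorial bookkeeping. I would also note that the resulting sum is not generally multiplicity-free, since distinct $T, T' \in R(n; \mathbf{r})$ can satisfy $\pi_{n+1, \ldots, n+m}(T) = \pi_{n+1, \ldots, n+m}(T')$; this is consistent with the paper's discussion preceding the statement.
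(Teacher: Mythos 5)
Your proposal is correct and follows essentially the same route as the paper: expand via Corollary \ref{cor:main}, then push forward term by term using exactly the dichotomy that $(\pi_j)_*[X_T]$ is $[X_{\pi_j(T)}]$ when $\deg(v_j)=3$ (isomorphism onto the image) and $0$ when $\deg(v_j)>3$ (positive-dimensional fibers), iterated one forgetting map at a time for $R_{n;\mathbf{r}}$. The extra justification you give via the product decomposition \eqref{eq:stratum-factors} is a slightly more detailed version of the same dimension/isomorphism argument the paper uses.
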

\begin{proof}
By Corollary \ref{cor:main}, we have in $A^\bullet(\Mbar_{0,\{a, b, c, 1, \ldots, n{+}1\}})$
\[\psi_{n+1}^{i+1} = \sum_{T \in \Slide^\psi(0^n, i+1)} [X_T].\]
Pushing forward along $\pi_{n+1}$, we obtain
\[\kappa_i = (\pi_{n+1})_*(\psi_{n+1}^{i+1}) = \sum_{T \in \Slide^\psi(0^n, i+1)} (\pi_{n+1})_*[X_T].\]
Let $T \in \Slide^\psi(0^n, i+1)$ and let $v_{n+1} \in T$ be the internal vertex adjacent to $n+1$. If $\deg(v_{n+1}) > 3$, then $\pi_{n+1}(X_T)$ has dimension lower than $X_T$, so \[(\pi_{n+1})_*[X_T] = 0.\] Otherwise, if $\deg(v) = 3$, then $\pi_{n+1}$ maps $X_T$ isomorphically onto its image $X_{\pi_{n+1}(T)}$, so \[(\pi_{n+1})_*[X_T] = [X_{\pi_{n+1}(T)}].\]
The desired equation for $\kappa_i$ follows. For $R_{n; \mathbf{r}}$, the argument is similar: we apply the pushforward 
\[R_{n; \mathbf{r}} = (\pi_{n+1, \ldots, n+m})_*(\psi_{n+1}^{r_1} \cdots \psi_{n+m}^{r_m})\]
one step at a time, starting from the sum given by the slide set $\Slide^\psi(0^n, \mathbf{r})$. For each $T$, if the degree condition for $R(n; \mathbf{r}) \subseteq \Slide^\psi(0^n, \mathbf{r})$ is satisfied, the pushforward is an isomorphism of $[X_T]$ onto its image. Otherwise, the dimension contracts in some step and the summand vanishes.
\end{proof}

These formulas are not in general multiplicity-free. Indeed, we expect that no multiplicity-free formula can exist for $\kappa_i$ or $R_{n; \mathbf{r}}$ in general; see Problem \ref{prob:kappa}. For $\kappa_i$, we can account for the multiplicities directly.

\begin{corollary}\label{cor:multiplicity}
For all $n$ and $i$, we have
\[\kappa_i = \sum_{T \in \Slide^\psi(0^n, i)} (\deg(v_{n+1})-3)[X_{\pi_{n+1}(T)}].\]
\end{corollary}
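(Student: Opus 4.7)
The plan is to derive Corollary \ref{cor:multiplicity} directly from Theorem \ref{thm:kappa} by grouping the trees in $K(n, i)$ according to their predecessor under the final $(n+1)$-slide. By Definition \ref{def:slide-psi}, every tree $T' \in \Slide^\psi(0^n, i+1)$ lies in $\slide_{n+1}(T)$ for exactly one $T \in \Slide^\psi(0^n, i)$, with uniqueness guaranteed by Lemma \ref{lem:distinct-insertions}. I would therefore reparametrize the sum in Theorem \ref{thm:kappa} by this predecessor:
\[\kappa_i \;=\; \sum_{T \in \Slide^\psi(0^n, i)} \;\sum_{\substack{T' \in \slide_{n+1}(T) \\ \deg(v_{n+1}) = 3 \text{ in } T'}} [X_{\pi_{n+1}(T')}],\]
and then verify that the inner sum collapses to $(\deg(v_{n+1}) - 3)\,[X_{\pi_{n+1}(T)}]$ via a count and a comparison of the forgotten trees.

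For the count, fix $T \in \Slide^\psi(0^n, i)$ and set $d := \deg(v_{n+1})$ in $T$. Recall that an $(n+1)$-slide inserts a new vertex $\overline{v}$ on the edge $e_a$, forces the branch $\mathrm{Br}_m$ onto $\overline{v}$, and then freely distributes the remaining $d - 3$ ``optional'' branches at $v_{n+1}$ between $v_{n+1}$ and $\overline{v}$, subject only to stability. Consequently $\deg(v_{n+1}) = 3$ in $T'$ precisely when exactly one optional branch is retained at $v_{n+1}$, giving $\binom{d-3}{1} = d - 3$ valid choices. When $d = 3$, the set $\slide_{n+1}(T)$ is empty and the prefactor $d - 3$ vanishes, so the two sides match trivially on such terms.

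For the comparison of forgotten trees, I would show that for every such $T'$ one has $\pi_{n+1}(T') = \pi_{n+1}(T)$. Indeed, forgetting leaf $n+1$ from $T$ yields a vertex of degree $d - 1$ adjacent to $\mathrm{Br}_a$, $\mathrm{Br}_m$, and the $d - 3$ optional branches. On the other hand, forgetting leaf $n+1$ from $T'$, where $v_{n+1}$ has degree $3$, leaves $v_{n+1}$ of degree $2$, forcing a contraction that merges the unique optional branch at $v_{n+1}$ onto $\overline{v}$; the resulting $\overline{v}$ is adjacent to the same collection $\mathrm{Br}_a$, $\mathrm{Br}_m$, and all $d - 3$ optional branches. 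The two leaf-labeled trees coincide. Substituting the count and this identification into the reparametrized sum above yields the corollary. The only obstacle is this modest combinatorial bookkeeping about slides and forgetful maps; no further intersection-theoretic input beyond Theorem \ref{thm:kappa} is needed.
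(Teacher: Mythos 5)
Your argument is correct and is essentially the paper's own proof: both start from Theorem \ref{thm:kappa}, group the trees of $K(n;i)$ by their unique predecessor $T \in \Slide^\psi(0^n,i)$ under the final $(n+1)$-slide, count the $\deg(v_{n+1})-3$ choices of the single branch retained at $v_{n+1}$, and observe that each such $T'$ forgets to $\pi_{n+1}(T)$. The only difference is that you spell out the contraction argument for $\pi_{n+1}(T')=\pi_{n+1}(T)$ and invoke Lemma \ref{lem:distinct-insertions} explicitly, steps the paper states without elaboration.
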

\begin{proof}
Let $T \in \Slide^\psi(0^n, i)$. By the calculation above, $T$ contributes to the expression for $\kappa_i$ if, after performing an $(i+1)$st $(n+1)$-slide, the resulting tree $T'$ has $\deg(v_{n+1}) = 3$. That is, the slide should move all but $v_{n+1}$ and exactly one other branch to the new vertex. Since the locations of the $a$ and $m$ branches, and of $v_{n+1}$ itself, are fixed, there are exactly $\deg(v_{n+1})-3$ other choices. Each of these choices has $\pi_{n+1}(T') = \pi_{n+1}(T)$, so $\pi_{n+1}(T)$ arises $\deg(v_{n+1})-3$ times.
\end{proof}

It is not difficult to show that the nonvanishing terms in Corollary \ref{cor:multiplicity} (in which $\deg(v_{n+1})>3$) give a set of \textit{distinct} trees $\pi_{n+1}(T)$.

\begin{example}
We compute $\kappa_1$ on $\Mbar_{0,\{a,b,c,1,2\}}$. We write $(A){-}(B){-}(C)$ to denote the boundary stratum whose dual tree consists of three internal vertices $v_A,v_B,v_C$ along a path, and leaf edges labeled by $A$ (resp.\ $B,C$) attached to $v_A$ (resp.\ $v_B,v_C$). We have, on $\Mbar_{0,\{a,b,c,1,2,3\}}$,
\begin{align*}
    \Slide^\psi(0,0,2) &= \big\{ (ab){-}(c){-}(123),\ (ab){-}(c1){-}(23),\ (ab){-}(c2){-}(13),\ \\ 
    & \qquad (abc){-}(1){-}(23),\ (ab1){-}(c){-}(23),\ (ab2){-}(c){-}(13)\big\}.
\end{align*}
All but the first of these have $\deg(v_3) = 3$. Applying Corollary \ref{cor:multiplicity}, we get $$\kappa_1=2\cdot D(ab|c12)+D(abc|12)+D(ab1|c2)+D(ab2|c1) \in A^1(\Mbar_{0,\{a,b,c,1,2\}}).$$ 
\end{example}

\section{Hyperplanes for lazy tournament points} \label{sec:tournament-hyperplanes}

We now consider the problem of finding parameterized families of hyperplanes whose intersections limit to the sets of points $\Tour(k_1,\ldots,k_n)$ determined by the lazy tournament rule. 

\subsection{Tournaments}

We first recall the definition of lazy tournaments from \cite{GGL-tournaments}.

\begin{definition}\label{def:lazy-tournament}
Let $T$ be a leaf-labeled trivalent tree. The \textbf{lazy tournament} of $T$ is a labeling of the edges of $T$ computed as follows.  Start by labeling each leaf edge (that is, an edge adjacent to a leaf vertex) by the value on the corresponding leaf, as in the second picture of Figure \ref{fig:example-tournament}.  Then iterate the following process:
\begin{enumerate}
    \item \textbf{Identify which pair `face off'.} Among all pairs of labeled edges $(i,j)$ (ordered so that $i<j$) that share a vertex and have a third unlabeled edge $E$ attached to that vertex, choose the pair with the largest value of $i$.
    \item \textbf{Determine the winner.}  The larger number $j$ is the \textit{winner}, and the smaller number $i$ is the \textit{loser} of the match.
    \item \textbf{Determine which of $i$ or $j$ advances.}  Label $E$ by either $i$ or $j$ as follows:
    \begin{enumerate}
        \item If $E$ is adjacent to a labeled edge $u\neq j$ with $u>i$, then label $E$ by $i$.  (We say $i$ \textit{advances}.)
        \item Otherwise, label $E$ by $j$. (We say $j$ \textit{advances}.)
    \end{enumerate}  
\end{enumerate}
We then repeat steps 1-3 until all edges of the tree are labeled. 
\end{definition}

We refer to Step 3(a) above as the \textbf{laziness rule}, since $j$ drops out of the tournament despite winning its match. This happens when $j$ can see that its opponent $i$ will be defeated, again, in its next round against $u$. 

An example of the result of the lazy tournament process is shown in Figure \ref{fig:example-tournament}. 

\begin{figure}
    \centering
    \includegraphics{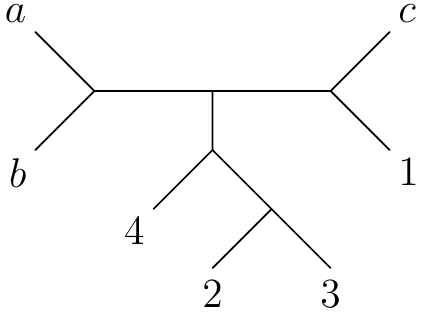}\hspace{1.3cm} \includegraphics{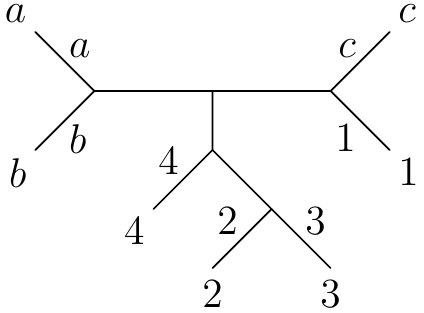}\hspace{1.3cm} \includegraphics{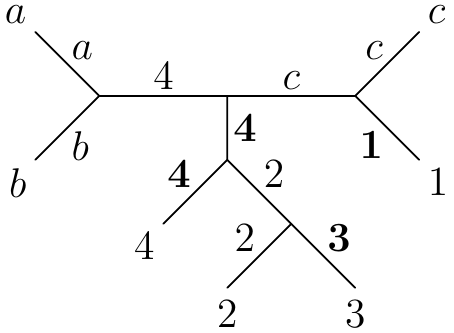}
    \caption{From left to right: A leaf-labeled trivalent tree $T$, its initial labeling of the leaf edges, and its full lazy tournament edge labeling.  Winners of each round of the tournament are shown in boldface at right, indicating $T \in \Tour(1, 0, 1, 2)$.}
    \label{fig:example-tournament}
\end{figure}

\begin{definition}\label{def:tour}
For any weak composition $\mathbf{k} = (k_1,\dots, k_n)$ of $n$, let $\Tour(\mathbf{k})$ be the set of trivalent trees with leaf labels $S$, in which (a) the leaf edges $a$ and $b$ share a vertex, and (b) each label $i\geq 1$ wins exactly $k_i$ times in the tournament.
\end{definition}

In Figure \ref{fig:example-tournament}, the tree $T$ is in $\Tour(1, 0, 1, 2)$.

\begin{thm}[\cite{GGL-tournaments}]\label{thm:tournaments}
 We have $\deg_{\mathbf{k}}(\emb_n)=\int_{\Mbar_{0,S}}\omega_1^{k_1}\cdots \omega_n^{k_n}=|\Tour(\mathbf{k})|.$
\end{thm}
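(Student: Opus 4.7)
The plan is to prove the equality $\int \omega^{\mathbf{k}} = |\Tour(\mathbf{k})|$ by induction on $n$, establishing that both sides satisfy a common recursion, which one may call the \emph{asymmetric string equation}.

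On the geometric side, assume without loss of generality that $k_n \geq 1$ (otherwise permute the indexing). Apply the projection formula along the forgetting map $\pi_n : \Mbar_{0,n+3} \to \Mbar_{0,n+2}$: since $\omega_i = \pi_n^* \omega_i$ for $i < n$ and $\omega_n = \psi_n$ on $\Mbar_{0,n+3}$, one obtains
\[
\int_{\Mbar_{0,n+3}} \omega_1^{k_1} \cdots \omega_n^{k_n}
= \int_{\Mbar_{0,n+2}} \omega_1^{k_1} \cdots \omega_{n-1}^{k_{n-1}} \cdot (\pi_n)_* (\psi_n^{k_n}).
\]
The pushforward $(\pi_n)_* (\psi_n^{k_n})$ expands as a combination of kappa classes and boundary divisors on $\Mbar_{0,n+2}$ (cf.\ Theorem \ref{thm:kappa}). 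Using Theorem \ref{thm:generalized-kappa} and further pullbacks, the resulting integrals reduce to integrals of $\omega$-products with fewer marked points, giving a recursion for $\int \omega^{\mathbf{k}}$.

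On the combinatorial side, I would analyze how the largest leaf $n$ participates in the tournament on $T \in \Tour(\mathbf{k})$. Since $n$ is the largest label, it wins every match it plays; however, the laziness rule allows $n$ to drop out of the tournament despite winning. Classifying trees by the local configuration at the vertex $v_n$ and by whether the laziness rule triggers in $n$'s match, and then removing leaf $n$ (contracting the resulting degree-$2$ vertex), produces a smaller trivalent tree that should be counted by $\Tour(\mathbf{k}')$ for an appropriately reduced composition $\mathbf{k}'$. Summing over this classification yields a recursion for $|\Tour(\mathbf{k})|$ that matches the geometric one term by term, with base cases $n \leq 1$ verified directly (in those cases both quantities equal $1$).

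The main obstacle is matching these two recursions in detail. The nonlocal nature of the laziness rule --- whether $n$ advances can depend on labels on edges far from $v_n$ --- makes a clean combinatorial decomposition delicate, and each branch of the case analysis must be aligned with a specific term arising from the kappa/boundary expansion of $(\pi_n)_*(\psi_n^{k_n})$. An alternative approach, more in the spirit of this paper, would be to construct for each $\mathbf{k}$ a flat family of hyperplane sections on $\Mbar_{0,n+3}$ whose limiting intersection is the reduced union of boundary points $\Tour(\mathbf{k})$, bypassing recursions altogether. Theorem \ref{thm:families} accomplishes exactly this for certain families of $\mathbf{k}$, and the inductive step given there suggests the approach could be pushed further --- though a uniformly parametrized family producing $\Tour(\mathbf{k})$ in general appears subtle, since the tournament rule mixes behavior at many vertices simultaneously rather than acting locally as the slide rule does.
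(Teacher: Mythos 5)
This theorem is not proved in the paper at all: it is imported from \cite{GGL-tournaments} (the label is cited), so there is no internal proof to compare against. Judged on its own, your proposal is a programme rather than a proof, and the step you yourself flag as ``the main obstacle'' is precisely the mathematical content that would need to be supplied. Concretely: (1) the opening reduction ``assume $k_n\ge 1$, otherwise permute the indexing'' is not legitimate for $\omega$ classes --- unlike the $\psi$ classes, the $\omega_i$ are defined via the forgetting maps and are genuinely asymmetric in the marked points (the asymmetric multinomial coefficients $\multichoose{n}{k_1,\ldots,k_n}$ are not symmetric in $\mathbf{k}$), so permuting indices changes the integral. The correct observation is that when $k_n=0$ both sides vanish: geometrically the integrand is a pullback under $\pi_n$ of a top-degree class on a smaller space, and combinatorially the Catalan condition fails. (2) The geometric recursion does not close as stated: the projection formula gives $\int_{\Mbar_{0,n+2}}\omega_1^{k_1}\cdots\omega_{n-1}^{k_{n-1}}\,\kappa_{k_n-1}$, a mixed $\omega$--$\kappa$ integral, not a combination of pure $\omega$-integrals with fewer points; expanding $\kappa_{k_n-1}$ through boundary and $\psi$ classes and restricting to strata produces new product types whose bookkeeping is exactly what the asymmetric string equation of \cite{CGM} packages, and you do not show your kappa route reproduces it. (3) On the combinatorial side, the classification of trees by the behavior of the largest leaf $n$ and the verification that deleting $n$ interacts correctly with the (nonlocal) laziness rule is asserted, not carried out; this matching of recursions, term by term, is the entire proof in the cited work, which establishes it via column-restricted parking functions and the asymmetric string recursion.

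Your fallback suggestion --- realizing $\Tour(\mathbf{k})$ as a limit of hyperplane sections --- cannot currently substitute for a proof either: the present paper achieves this only for the special families of Theorem \ref{thm:families}, poses the general case as Problem \ref{prob:tournament-hyperplanes}, and explicitly suspects that the modified-slide construction fails already for $\mathbf{k}=(1,1,1)$. Note also that the equality $\deg_{\mathbf{k}}(\emb_n)=\int_{\Mbar_{0,S}}\omega^{\mathbf{k}}$ half of the statement is standard (hyperplane sections of the embedding), so the substance is the count $|\Tour(\mathbf{k})|$; if you want a proof within the framework of this paper, the available route is Corollary \ref{cor:multidegrees}, which gives $\deg_{\mathbf{k}}(\Omega_n)=|\Slide^{\omega}(\mathbf{k})|$, combined with a bijection between $\Slide^{\omega}(\mathbf{k})$ and $\Tour(\mathbf{k})$ --- but the paper records that bijection as open (Problem \ref{prob:bijection}), so that path too is incomplete.
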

It is therefore natural to ask if we can achieve the tournament boundary points as degenerations of intersections with hyperplanes as well.

From a combinatorial perspective, one advantage of the sets $\Tour(\mathbf{k})$ is that they are disjoint (as $\mathbf{k}$ ranges over all length $n$ compositions of $n$).  This is in contrast to the sets $\Slide(\mathbf{k})$, which all have at least one common tree by Proposition~\ref{prop:common-tree}. Notably, an immediate corollary of Theorem~\ref{thm:tournaments} is that the \textit{total degree} (defined as the sum of the multidegrees) is $$\sum_{\mathbf{k}}\mathrm{deg}_{\mathbf{k}}(\Omega_n)=(2n-1)!!=(2n-1)\cdot (2n-3) \cdot \cdots \cdot 5 \cdot 3 \cdot 1.$$  This enumeration by the odd double factorial follows from the fact that every tree in which $a,b$ is paired occurs in exactly one of the tournament sets (by disjointness), and the trees in which $a,b$ are paired correspond bijectively under $\pi_b$ to the set of all boundary points in $\Mbar_{0,S\setminus b}$.  It is well known that there are $(2n-1)!!$ such points.

\subsection{Hyperplanes for tournaments}

The aim of this section is to prove Theorem \ref{thm:families}, which we restate here for the reader's convenience.

\begin{FamiliesThm}
 Suppose the tuple $\mathbf{k}=(k_1,\ldots,k_n)$ is of one of the following forms:
 \begin{itemize}
     \item $(0,0,\ldots,0,0,n)$,
     \item $(0,0,\ldots,0,1,n-1)$,
     \item $(0,0,\ldots,0,n-1,1)$, or
     \item $(0,0,2,2)$.
 \end{itemize}
Then there exists an explicitly constructed set of hyperplanes in $\PP^1\times \cdots \times \PP^n$, with $k_i$ of them from $\PP^i$ for each $i$, such that their intersection locus $V^{\mathrm{tour}}(\mathbf{k},\vec{t})$ in $\Mbar_{0,n+3}$, pulled back under $\Omega_n$, satisfies 
\begin{equation} \label{eq:limit-tour}
\lim_{\vec{t}\to \vec{0}} V^{\mathrm{tour}}(\mathbf{k};\vec{t})=\Tour(\mathbf{k}).
\end{equation}
Moreover, given any set of hyperplanes satisfying \eqref{eq:limit-tour} for $\mathbf{k} = (k_1,\ldots,k_n)$, there exists such a set for $(k_1,\ldots,k_{n-1},0,k_{n}+1)$.
\end{FamiliesThm}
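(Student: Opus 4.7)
The plan is to prove Theorem \ref{thm:families} by handling each base case and the inductive step separately, in each instance using the flat-limit framework of Section \ref{sec:limiting-hyperplanes} together with the variants of the standard slide hyperplanes allowed by Remarks \ref{rmk:simpler-hyperplanes} and \ref{rmk:general-hyperplanes}. The key point is that dropping terms and/or reordering subscripts in the hyperplane equation $H_i^\omega(t)$ produces a \emph{modified} slide rule in the corresponding limit, so the design problem becomes combinatorial: find reorderings/truncations whose modified slide rule agrees with the lazy tournament rule of Definition \ref{def:lazy-tournament} on each boundary stratum. For each of the four tuples, I would produce such an explicit prescription and then verify correctness stratum-by-stratum by the same iterated-limit argument as in the proof of Theorem \ref{thm:main}, using Lemmas \ref{lem:generically-equal-limits} and \ref{lem:generically-equal-limits-iterated} to guarantee generic reducedness and the set-theoretic equality.

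\textbf{Base cases.} For $\mathbf{k}=(0,\ldots,0,n)$ all $n$ hyperplanes lie in $\PP^n$, and I would choose them so that the $j$-th hyperplane, under $t_j\to 0$, selects the pair of labeled edges that face off in round $j$ of the tournament at $n$; the variable order in each hyperplane corresponds to tie-breaking within that round. For $(0,\ldots,0,1,n-1)$ the extra hyperplane in $\PP^{n-1}$ is designed so that its limit implements the unique round (if any) in which the winner is $n-1$; the remaining $n-1$ hyperplanes in $\PP^n$ then follow the pattern from the first case, acting on strata already adapted to the $n-1$ round. The case $(0,\ldots,0,n-1,1)$ is symmetric in spirit: I would allocate a single $\PP^n$-hyperplane to produce the one $n$-win and use $n-1$ hyperplanes in $\PP^{n-1}$ (a shifted version of the first case on the image of the forgetting map) for the rest. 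Finally, $(0,0,2,2)$ is a single small calculation on $\Mbar_{0,7}$, where I would write out two hyperplanes in each of $\PP^3$ and $\PP^4$ and check the limit directly, using the Monin--Rana equations if needed to handle each of the finitely many strata.

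\textbf{Inductive step and main obstacle.} For the inductive step, given hyperplanes realizing $\Tour(k_1,\ldots,k_n)$, I would construct hyperplanes for $(k_1,\ldots,k_{n-1},0,k_n+1)$ by pulling back the original hyperplanes under the forgetting map $\pi_{n+1}$ (which is flat, so by Lemma \ref{lem:flat-pullbacks} flat limits commute with pullback) and appending a single new hyperplane in $\PP^{n+1}$ chosen to introduce a new terminal round won by the new leaf $n+1$. Combinatorially this matches the observation that a tournament tree in $\Tour(k_1,\ldots,k_{n-1},0,k_n+1)$ is obtained from one in $\Tour(k_1,\ldots,k_n)$ by attaching the leaf $n+1$ along the edge that was labeled by $n$ in the $(k_n{+}1)$-th round. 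The main obstacle I expect is the \emph{laziness} rule (step (3a) in Definition \ref{def:lazy-tournament}): it has no direct analogue among the slide degenerations studied in Section \ref{sec:limiting-hyperplanes}, so faithfully reproducing it requires a careful choice of dropped terms and reordered subscripts, and verifying that this choice works uniformly on every boundary stratum (not just generically) is where most of the technical effort will lie. Establishing that the limit is generically reduced, and in particular that each tournament point appears with multiplicity one, is then a semicontinuity argument of the same flavor as in Theorem \ref{thm:main}.
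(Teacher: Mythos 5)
Your overall framework (modified slide hyperplanes via Remarks \ref{rmk:simpler-hyperplanes}--\ref{rmk:general-hyperplanes}, stratum-by-stratum limits, a direct/computer check for $(0,0,2,2)$) matches the paper's treatment of the base cases, although it is left at the level of a plan rather than explicit equations; for $(0,\ldots,0,n)$ you also miss the shortcut the paper uses, namely that $\Tour(0,\ldots,0,n)=\Slide^\omega(0,\ldots,0,n)$ is the single caterpillar tree, so the standard hyperplanes of Theorem \ref{thm:main} already work with no new design needed.

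The genuine gap is in your inductive step. You propose to pull back the old hyperplanes under $\pi_{n+1}$ and append a single new hyperplane in $\PP^{n+1}$. That configuration has $k_i$ hyperplanes from $\PP^i$ for $i\le n$ (in particular $k_n$ from $\PP^n$) and one from $\PP^{n+1}$, whereas the theorem requires, for the tuple $(k_1,\ldots,k_{n-1},0,k_n+1)$, \emph{zero} hyperplanes from $\PP^n$ and $k_n+1$ from $\PP^{n+1}$. Your construction therefore computes the multidegree of $\omega_1^{k_1}\cdots\omega_n^{k_n}\omega_{n+1}$, i.e.\ $\multichoose{n+1}{k_1,\ldots,k_n,1}$, not $\multichoose{n+1}{k_1,\ldots,k_{n-1},0,k_n+1}$; these differ in general (e.g.\ $(1,1,1)$ versus $(1,0,2)$), so the limiting set has the wrong cardinality and cannot equal $\Tour(k_1,\ldots,k_{n-1},0,k_n+1)$. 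Concretely, after the pullback you sit on the one-dimensional strata $\pi_{n+1}^{-1}(X_T)$ for $T\in\Tour(\mathbf{k})$, obtained by inserting $n+1$ at \emph{any} internal vertex of $T$, and one further hyperplane performs a single forced $(n+1)$-slide on each such stratum, producing roughly $(n+1)\,|\Tour(\mathbf{k})|$ points rather than the $|\Tour(\mathbf{k})|$ points where $n+1$ pairs with $n$. The paper's construction is different: it \emph{relabels} the last $k_n$ equations, replacing the $\PP^n$-variables $z_i$ by the $\PP^{n+1}$-variables $w_i$ (so those slides are now performed from the perspective of $n+1$, forcing the leaf $n$ to stay attached to $n+1$ throughout), and then adds one ordinary $\omega$-type equation $w_b+t_{n+1}w_c+\cdots+t_{n+1}^{n}w_{n-1}=0$ whose final slide makes $n$ and $n+1$ collide; this realizes the bijection $\Tour(\mathbf{k})\leftrightarrow\Tour(k_1,\ldots,k_{n-1},0,k_n+1)$ given by inserting $n+1$ to pair with $n$ (your description of that bijection as attaching $n+1$ along ``the edge labeled by $n$ in the $(k_n+1)$-th round'' is also not quite the right combinatorial statement).
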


We prove this in five lemmas; four for the four cases in the theorem, and one for the inductive construction for obtaining $(k_1,\ldots,k_{n-1},0,k_n+1)$ from $\mathbf{k}$. For each one, we construct modified versions of the hyperplanes used in the slide rule, changing which variables appear and in what order. These changes effectively modify the minimality condition in each step of the slide rule; see Remark \ref{rmk:general-hyperplanes}.

Below, we write $[y_b:y_c:y_1:y_2:\cdots:y_{n-2}]$ for the coordinates of $\PP^{n-1}$ and $[z_b:z_c:z_1:z_2:\cdots:z_{n-1}]$ for the coordinates of $\PP^{n}$.

\begin{lemma}
  For $\mathbf{k}=(0,0,\ldots,0,n)$, set $V^{\mathrm{tour}}((0,0,\ldots,0,n);\vec{t})=V^{\omega}((0,0,\ldots,0,n);\vec{t})$. Then $$\lim_{\vec{t}\to \vec{0}} V^{\mathrm{tour}}((0,0,\ldots,0,n);\vec{t})=\Tour(0,0,\ldots,0,n).$$
\end{lemma}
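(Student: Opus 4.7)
The plan is to invoke Theorem \ref{thm:main} directly, since the definition already gives $V^{\mathrm{tour}}(\mathbf{k};\vec t) = V^\omega(\mathbf{k};\vec t)$ for this $\mathbf{k}$. This yields the set-theoretic equality
\[\lim_{\vec t \to \vec 0}V^{\mathrm{tour}}(\mathbf{k};\vec t) = \bigcup_{T \in \Slide^\omega(\mathbf{k})} X_T,\]
so it suffices to establish the combinatorial identity $\Slide^\omega(0,\ldots,0,n) = \Tour(0,\ldots,0,n)$ as sets of trivalent trees. Both sides have cardinality one: since $\omega_n = \psi_n$ on $\Mbar_{0,n+3}$ (no forgetting is required when $i = n$), we have $\omega_n^n = \psi_n^n = \binom{n}{0,\ldots,0,n}[\mathrm{pt}] = [\mathrm{pt}]$, so Corollary \ref{cor:main} gives $|\Slide^\omega(\mathbf{k})| = 1$ and Theorem \ref{thm:tournaments} gives $|\Tour(\mathbf{k})| = \multichoose{n}{\mathbf{k}} = 1$. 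It therefore suffices to exhibit a single tree $T_0$ lying in both sets.

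I would then trace through the slide algorithm to identify $T_0$. Since $k_1 = \cdots = k_{n-1} = 0$, the insertion steps just produce the star tree on $\{a,b,c,1,\ldots,n\}$, and we must perform $n$ successive $n$-slides. The key observation is that the $k$-th slide creates a new vertex $\overline{v}_k$ whose degree $2 + j_k$ (where $j_k \geq 1$ is the number of branches moved) is fixed at creation: later slides only modify $v_n$ and insert further new vertices between $v_n$ and $\overline{v}_k$. For the final tree to be trivalent, each $\overline{v}_k$ must have degree $3$, forcing $j_k = 1$ for all $k$. Consequently each slide moves only the (deterministic) minimal branch $\mathrm{Br}_m$, and the output is the unique caterpillar $T_0$ with $\{a,b\}$ at one endpoint, $\{n-1, n\}$ at the other (or $\{c,1\}$ if $n=1$), and the intermediate leaves $c, 1, 2, \ldots, n-2$ arranged in that order along the path.

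To conclude, I would verify directly that $T_0 \in \Tour(\mathbf{k})$ via its lazy tournament. At each of the $n$ rounds, leaf $n$ faces off against the next leaf label along the path toward $a$ (first $n-1$ at the endpoint, then $n-2, n-3, \ldots,$ and finally $c$); in every round $n$ wins, and the laziness rule is never triggered because the unique neighboring labeled edge at the opposite vertex is strictly smaller than the opponent $i$. Thus $n$ wins all $n$ rounds and $T_0 \in \Tour(\mathbf{k})$, which combined with the cardinality count gives $\Tour(\mathbf{k}) = \{T_0\} = \Slide^\omega(\mathbf{k})$, as desired. The main (minor) obstacle is the combinatorial rigidity argument that forces $j_k = 1$ in every slide; once this is established, both the tree identification and the tournament check are direct.
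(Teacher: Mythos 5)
Your proof is correct and follows essentially the same route as the paper: both reduce to checking that $\Slide^\omega(0,\ldots,0,n)$ and $\Tour(0,\ldots,0,n)$ are the same singleton (the caterpillar $T_0$ of Proposition \ref{prop:common-tree}) and then invoke Theorem \ref{thm:main}. The paper leaves this combinatorial coincidence as "easily verified," while you supply the details (determinism of the $n$-slides forcing $j_k=1$, the tournament check, and the cardinality count via $\omega_n=\psi_n$ and Theorem \ref{thm:tournaments}), all of which are sound.
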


\begin{proof}
It is easily verified, using the slide and tournament rules, that the sets $\Tour(0,0,\ldots,0,n)$ and $\Slide^\omega(0,0,\ldots,0,n)$ coincide.  Indeed they both only contain the single tree:  
\begin{center}
    \includegraphics{Figures/first-lemma.pdf}
\end{center}
(see Proposition \ref{prop:common-tree}).  Thus we are done by Theorem \ref{thm:main}.
\end{proof}

Throughout the remainder of this section, we will say that $V^{\mathrm{tour}}(\mathbf{k};\vec{t})$ is \textit{defined by} a given set of hyperplane equations in $\PP^1 \times \PP^2 \times \cdots \times \PP^n$ if it is equal to $\Omega_n^{-1}$ of the vanishing locus of those equations.

\begin{lemma}
Define $V^{\mathrm{tour}}((0,0,\ldots,0,1,n-1);\vec{t})$ by the set of equations: $$y_b=0, \hspace{0.5cm} z_b=t_2 z_{n-1}, \hspace{0.5cm}z_c=t_3z_1, \hspace{0.5cm}z_1=t_4z_2, \hspace{0.5cm}z_2=t_5z_3,\hspace{0.5cm} \ldots,\hspace{0.5cm} z_{n-3}=t_{n}z_{n-2}$$ where $\vec{t}=(t_1,t_2,\ldots,t_n)$.  Then $$\lim_{\vec{t}\to \vec{0}} V^{\mathrm{tour}}((0,0,\ldots,0,1,n-1);\vec{t})=\Tour(0,0,\ldots,0,1,n-1).$$
\end{lemma}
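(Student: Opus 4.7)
My plan is to mirror the iterated-limit and generic-reducedness strategy from the proof of Theorem \ref{thm:main}, and then to combinatorially identify the resulting set of strata with $\Tour(0,\ldots,0,1,n-1)$.

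First I would handle the fixed hyperplane $y_b = 0$ on its own. Because $y_b$ is a coordinate on the $|\omega_{n-1}|$-factor of $\Omega_n$, and $|\omega_{n-1}|$ factors through $\pi_n$, its pullback to $\Mbar_{0,S}$ is a generically reduced union of boundary divisors: by the description of coordinate hyperplanes in Section \ref{sec:Kapranov}, it is the union of the divisors $D((n-1)\star \mid ab\, \star^c)$ on $\Mbar_{0,S\setminus n}$, pulled back via $\pi_n$ (so that $n$ may attach on either side). Call this union $Z_0$; it serves as the initial subscheme against which the remaining moving hyperplanes are intersected. Next I would process the $n-1$ moving equations $z_b = t_2 z_{n-1},\ z_c = t_3 z_1,\ z_1 = t_4 z_2,\ldots,\ z_{n-3} = t_n z_{n-2}$ as successive flat limits in the iterated order permitted by the theorem. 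Each is a two-term specialization of the slide hyperplane $\hyp_n^\psi(t)$, so in the limit it becomes a coordinate condition $z_b = 0$, then $z_c = 0$, and so on. By an application of Lemma \ref{thm:strata-psi} in the form of Remark \ref{rmk:general-hyperplanes}, stratum by stratum, each such limit is a generically reduced union of boundary strata obtained from the previous step by a ``modified'' $n$-slide that moves the branch of the chosen variable ($b$, then $c$, then $1$, then $2$, \ldots) onto the $a$-side of $v_n$. Generic reducedness and the absence of multiplicities then propagate through the full iterated limit by the same injectivity-plus-almost-transversality argument as in Lemmas \ref{lem:distinct-insertions} and \ref{lem:generically-equal-limits-iterated}.

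The bulk of the work is the combinatorial identification of this iterated-limit set of trees with $\Tour(0,\ldots,0,1,n-1)$. The equation $y_b = 0$ is calibrated to enforce, after all later slides are performed, condition (a) of Definition \ref{def:tour} that $a$ and $b$ share a vertex in the limiting tree. The ordering $z_b, z_c, z_1, z_2, \ldots$ in the subsequent $n$-slide equations is designed to simulate the lazy tournament's preference in Step 1 of Definition \ref{def:lazy-tournament} for face-offs with the largest smaller-index: each successive slide records which branch $n$ defeats in the next round, and the single internal edge in which $n-1$ rather than $n$ labels the winning side arises from the unique round in which $n-1$ wins. I would prove the forward direction by reading the sequence of slides left to right as a tournament transcript, and the converse by reading a given $T \in \Tour(0,\ldots,0,1,n-1)$ right to left as a chain of edge contractions producing a valid sequence of slide choices; an induction on $n$ should make both directions tractable.

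The main obstacle I anticipate is verifying the precise dictionary between the combinatorial effect of the specific equations and the lazy tournament's \emph{laziness rule} (Step 3(a) of Definition \ref{def:lazy-tournament}), which is the subtle feature distinguishing $\Tour$ from the larger set of trivalent trees with $a,b$ paired. Confirming that the hyperplane degeneration produces exactly the trees in which the larger label ``declines to advance'' in the correct situations will require a careful case analysis at each point where a slide could a priori produce multiple tree structures but only some of them are consistent with the tournament. Checking that the laziness rule correctly excludes the extra possibilities (and, equally importantly, does not exclude valid ones) is where I expect the matching to be most delicate.
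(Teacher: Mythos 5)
Your overall architecture matches the paper's (process $y_b=0$ first, then treat the two-term equations as modified slides in the sense of Remark \ref{rmk:general-hyperplanes}, then match combinatorially with the tournament set), but there is a genuine gap at the decisive step. Your description of the limits --- ``in the limit it becomes a coordinate condition $z_b=0$, then $z_c=0$, and so on,'' sliding ``$b$, then $c$, then $1$, then $2,\ldots$'' toward $a$ --- is not correct uniformly across the strata that survive, and the exception is exactly what produces part of $\Tour(0,\ldots,0,1,n-1)$. On the component $D(\{a,b\}\mid\{c,1,\ldots,n\})$ of the locus cut out by $y_b=0$, the coordinate $z_b$ vanishes identically, so the equation $z_b=t_2z_{n-1}$ does not limit to $z_b=0$ there; after saturating in $t_2$ it limits to $z_{n-1}=0$, i.e.\ it slides the branch containing $n-1$ toward $a$. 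This is the whole point of including the second variable, and it is what produces the tournament point with $n-1$ adjacent to the $(a,b)$ vertex; a transcript of slides ``$b$, then $c$, then $1,\ldots$'' cannot yield that tree, so the matching with $\Tour(0,\ldots,0,1,n-1)$ would come out wrong. Relatedly, you never carry out the dimension count the paper uses (only divisors with $\deg(v_n)\ge n+2$ can survive $n-1$ further hyperplanes pulled back from $\PP^n$), which is what reduces the stratum-by-stratum analysis to the two explicit families $D(\{a,b\}\mid\{c,1,\ldots,n\})$ and $D(\,\cdot\mid\{j,n-1\})$ for $j\in\{c,1,\ldots,n-2\}$ and makes the computation finite.

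Finally, the identification with the lazy tournament points --- which you flag as the anticipated obstacle and defer to a ``careful case analysis'' --- is the actual content of the lemma, and in the paper it is immediate once the limits are computed explicitly: on $D(\{a,b\}\mid\cdots)$ one gets the single caterpillar with $n-1$ next to $(a,b)$, and on each $D(\,\cdot\mid\{j,n-1\})$ the equations slide $b$, then $c,1,\ldots,j-1$, then the cherry $\{j,n-1\}$, then $j+1,\ldots,n-2$, giving one point per $j$; these $n$ trees are checked directly to be those whose tournament has $n-1$ winning once and $n$ winning the remaining rounds. Without the corrected limit analysis and the explicit list of surviving strata, your proposal does not yet reach that verification, so as written it does not establish the lemma.
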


\begin{proof}
  Intersecting with the first equation, $y_b=0$, restricts to the divisors in which $b$ is on the $a$ branch from the perspective of $n-1$.  Moreover, since we will be intersecting with $n-1$ hyperplanes in $\PP^n$, we may restrict our attention to divisors in which $n$'s internal vertex $v_n$ has degree at least $n+2$.  In particular, we may restrict to the boundary strata
  $$D(\{a,b\}|\{c,1,2,\ldots,n-1,n\})\cup \bigcup_{j\in \{c,1,2,\ldots,n-2\}} D(\{a,b,c,1,\ldots,\hat{j},\ldots,n-2,n\}|\{j,n-1\}).$$
  
  First consider the divisor $D(\{a,b\}|c,1,2,\ldots,n)$.  Then by Remark \ref{rmk:general-hyperplanes}, intersecting with $z_b=t_2z_{n-1}$ and taking the limit as $t_2\to 0$ effectively sets $z_{n-1}=0$, which treats $n-1$ as the minimal element and slides it towards $a$.  We can again restrict by dimensionality to the stratum in which the three internal vertices have leaves $\{a,b\}$, $\{n-1\}$, and $\{c,1,2,\ldots,n-2,n\}$.  The remaining equations similarly slide $c,1,\ldots,n-3$ towards $a$, yielding the unique point shown below.  
  \begin{center}
      \includegraphics{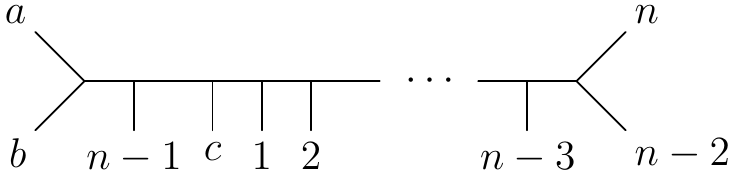}
  \end{center}
  Now consider a divisor of the form $D(\{a,b,c,1,\ldots,\hat{j},\ldots,n-2,n\}|\{j,n-1\})$.  The first equation, $z_b=t_2z_{n-1}$, simply says that we slide $b$ towards $a$ (so that they share an internal vertex), and again by dimensionality we can restrict to the case in which all remaining edges are still attached to the same internal vertex as $n$.  The remaining equations similarly slide $c,1,2,\ldots,j-1$ in that order towards $a$, then move the branch containing the pair $j,n-1$ towards $a$, and finally move $j+1,\ldots,n-2$ towards $a$.  An example is shown below for $n=6$ and $j=2$.
  \begin{center}
      \includegraphics{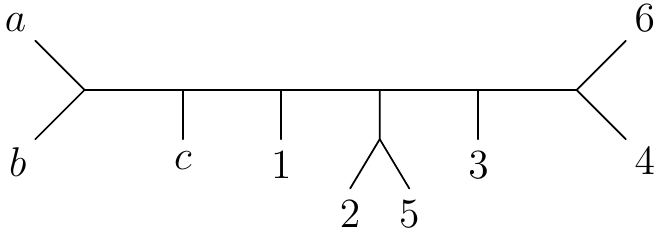}
  \end{center}
  
  One can easily verify that these are precisely the boundary points whose lazy tournament has $n-1$ winning one round and $n$ winning the rest.
\end{proof}

\begin{lemma}
Define $V^{\mathrm{tour}}((0,0,\ldots,0,n-1,1);\vec{t})$ by the set of $n-1$ equations defining the smaller locus $V^{\omega}((0,0,\ldots,0,n-1);\vec{t})$ in the $y$ variables, plus the single equation $$z_b=t_n z_{n-1}$$ in the $z$ variables.  Then $$\lim_{\vec{t}\to \vec{0}} V^{\mathrm{tour}}((0,0,\ldots,0,n-1,1);\vec{t})=\Tour(0,0,\ldots,0,n-1,1).$$
\end{lemma}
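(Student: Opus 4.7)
The plan is to mirror the iterated-limit strategy of the preceding lemmas. The iterated limit factors into an inner limit in the $y$-parameters followed by the outer limit in $t_n$, and I would analyze each stage using Theorem~\ref{thm:strata-psi} (and its generalization in Remark~\ref{rmk:general-hyperplanes}) together with the flatness tools of Section~\ref{sec:hyperplanes}.

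For the inner limit, the $y$-equations involve only the coordinates of $|\omega_{n-1}|$, so they cut out the $\pi_n$-pullback of $V^\omega((0,\ldots,0,n-1);\vec{t}_y) \subseteq \Mbar_{0,n+2}$. By the first lemma in this section, applied with $n-1$ in place of $n$, this smaller locus has flat limit the single boundary point $X_{T^\ast}$ of $\Mbar_{0,n+2}$, where $T^\ast$ is the caterpillar
$$T^\ast = \{a,b\}{-}\{c\}{-}\{1\}{-}\cdots{-}\{n-3\}{-}\{n-2,n-1\}.$$
By Lemma~\ref{lem:flat-pullbacks}, the inner limit on $\Mbar_{0,n+3}$ equals $\pi_n^{-1}(X_{T^\ast}) = \bigcup_{k=1}^{n} X_{T_k}$, a reduced union of $n$ one-dimensional boundary strata, where $T_k$ is $T^\ast$ with the leaf $n$ inserted at the $k$-th internal vertex.

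Next I would intersect each $X_{T_k}$ with $\{z_b - t_n z_{n-1} = 0\}$ and take $t_n \to 0$. The argument of Theorem~\ref{thm:strata-psi}, adapted as in Remark~\ref{rmk:general-hyperplanes}, identifies this as a modified $n$-slide that moves the first branch (not containing $a$) in the ordering $b, n-1$. For $k=1$, leaf $b$ forms its own branch at the degree-four vertex $v_n$, so the $b$-branch slides, and stability pins down the unique result $T^{(1)} = \{a,b\}{-}\{n\}{-}\{c\}{-}\cdots{-}\{n-2,n-1\}$. For $2 \le k \le n$, the $a$-branch at $v_n$ contains $b$, so $z_b$ vanishes on $X_{T_k}$ and the leading coefficient becomes $z_{n-1}$; the $(n-1)$-branch then slides, and stability again yields a unique result $T^{(k)}$: the caterpillar $\{a,b\}{-}\{c\}{-}\cdots{-}\{n-3\}{-}\{n-1\}{-}\{n-2,n\}$ for $k=n$, or a caterpillar with a cherry $\{j_k, n\}$ (where $j_2 = c$ and $j_k = k-2$ for $k \ge 3$) hanging off a central vertex for $2 \le k \le n-1$. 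Lemma~\ref{lem:generically-equal-limits-iterated} together with the disjointness of the slides across different $k$ (in analogy with Lemma~\ref{lem:distinct-insertions}) then shows that the full flat limit is generically reduced and equals $\bigcup_{k=1}^n X_{T^{(k)}}$ as a set of $n$ distinct boundary points.

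To conclude, I would identify $\{T^{(k)}\}_{k=1}^n$ with $\Tour(0,\ldots,0,n-1,1)$. Since the flat limit is zero-dimensional, generically reduced, and represents $\omega_{n-1}^{n-1}\omega_n$, its cardinality equals $\int \omega_{n-1}^{n-1}\omega_n = |\Tour(0,\ldots,0,n-1,1)|$ by Theorem~\ref{thm:tournaments}; so it suffices to verify $T^{(k)} \in \Tour(0,\ldots,0,n-1,1)$ for each $k$, which I would do by direct lazy-tournament computation. The structural pattern is uniform: processing the rightmost cherry first, the label $n-1$ propagates leftward through the caterpillar, winning every subsequent match (some defeated smaller labels advance by laziness, others simply drop out, as dictated by the next vertex's label), while $n$ contributes its sole win either at the standalone $\{n\}$ vertex (for $k=1$, then advancing to the $\{a,b\}$ end) or at the cherry $\{j_k, n\}$ (for $k \ge 2$, then being displaced by laziness since the neighboring vertex carries a larger incoming label). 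In every case the final win-tally is $(0,\ldots,0,n-1,1)$, yielding $\{T^{(k)}\}_{k=1}^n \subseteq \Tour(0,\ldots,0,n-1,1)$ and hence equality. The main delicate point is the laziness bookkeeping, but the rigid caterpillar-with-cherry structure of the $T^{(k)}$'s reduces this to a small uniform case analysis.
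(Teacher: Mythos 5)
Your proposal is correct and follows essentially the same route as the paper: take the inner limit to the unique caterpillar point via the $(0,\ldots,0,n-1)$ case, pull back under $\pi_n$ to the $n$ one-dimensional strata, then analyze the modified slide for $z_b = t_n z_{n-1}$ by cases according to whether $n$ shares a vertex with $b$, and identify the resulting $n$ points with $\Tour(0,\ldots,0,n-1,1)$. The only (harmless) deviation is cosmetic: you close by combining one inclusion with the cardinality count from Theorem~\ref{thm:tournaments}, where the paper simply verifies directly that the resulting trees are exactly those whose lazy tournament gives $n-1$ wins to $n-1$ and one win to $n$.
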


\begin{proof}
Intersecting with the first $n-1$ equations and taking the corresponding limits, we know for size $n-1$ we obtain the unique tree $T_0$ in $\Slide^\omega(0,0,\ldots,0,n-1)$, namely the caterpillar tree $T_0$ with $a,b$ on one end, $n-2,n-1$ on the other, and leaves $c,1,2,\ldots,n-3$ in order in between.  Thus on $\Mbar_{0,n+3}$ we are in the union of divisors in $\pi_{n}^{-1}(T_0)$, given by inserting the leaf $n$ to attach to any one of the internal vertices of $T_0$.

  We now consider the equation $z_b=t_nz_{n-1}$.  Intersecting and taking the limit with a divisor in which $n$ and $b$ are on the same vertex slides the $b$ towards $a$, and otherwise slides the branch containing $n-1$ towards $a$.  In the former case we get the point:
\begin{center}
    \includegraphics{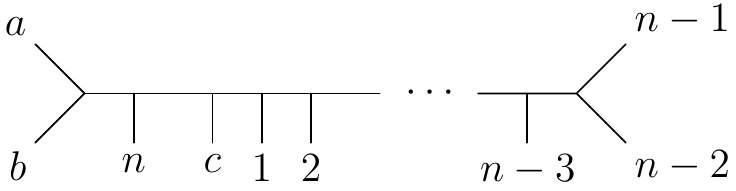}
\end{center}
 and in the latter cases we get points that look like (for $n=6$, where the $6$ may be merged with any of the other points $c,1,3,4,5$ rather than with $2$): 

\begin{center}
    \includegraphics{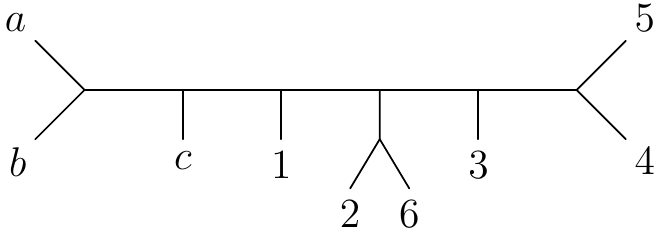}
\end{center}

These are precisely the trees whose lazy tournament has $n-1$ winning $n-1$ rounds and $n$ winning once.
\end{proof}

\begin{lemma}
Define $V^{\mathrm{tour}}((0,0,2,2);(t_1,t_2,t_3,t_4))$ by the set of equations $$y_b=0,\hspace{0.5cm}y_c+t_2 y_1+t_2^2y_2=0, \hspace{0.5cm}z_b+t_3 z_3=0, \hspace{0.5cm}z_c+t_4z_1+t_4^2z_2=0.$$  Then $$\lim_{\vec{t}\to \vec{0}} V^{\mathrm{tour}}((0,0,2,2);\vec{t})=\Tour(0,0,2,2).$$
\end{lemma}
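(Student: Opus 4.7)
The plan is to mirror the proofs of Theorem \ref{thm:main} and the preceding three lemmas of this section. Taking the four limits iteratively in the order $t_2\to 0$, then $t_3\to 0$, then $t_4\to 0$ (with the parameter-free equation $y_b=0$ handled alongside $t_2$), I would reduce to a stratum-by-stratum analysis via the commutation of limits and intersections (Lemma \ref{lem:generically-equal-limits-iterated}) and the compatibility of flat limits with flat pullbacks (Lemma \ref{lem:flat-pullbacks}). On each boundary stratum $X_T$, each equation restricts as in Theorem \ref{thm:strata-psi}: on the linear space $P_\sigma$ of Lemma \ref{lem:kap-arbitrary-tree}, the defining polynomial becomes a principal ideal whose leading term (after saturating in the relevant parameter) is a single coordinate. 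By Remark \ref{rmk:general-hyperplanes}, this leading coordinate realizes a \emph{modified slide} at $v_3$ (for the $y$-equations, applied to $\pi_4(T)$) or at $v_4$ (for the $z$-equations, applied to $T$).

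Explicitly, I expect the four modified slides to be as follows. The equation $y_b=0$ slides the $b$-branch of $\pi_4(T)$ at $v_3$ toward $a$, or is trivially satisfied if $b\in\mathrm{Br}_a$ already. The limit $t_2\to 0$ of $y_c+t_2y_1+t_2^2 y_2=0$ slides the branch at $v_3$ containing the smallest of $c,1,2$ not on $\mathrm{Br}_a$. The limit $t_3\to 0$ of $z_b+t_3 z_3=0$ slides the $b$-branch of $T$ at $v_4$ toward $a$ when $b\notin\mathrm{Br}_a$ at $v_4$, and otherwise slides the $3$-branch. Finally, the limit $t_4\to 0$ of $z_c+t_4z_1+t_4^2z_2=0$ slides the branch at $v_4$ containing the smallest of $c,1,2$ not on $\mathrm{Br}_a$. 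Each step is dimensionally transverse and cuts out a generically reduced divisor on the preceding stratum by the same local calculation as Theorem \ref{thm:strata-psi}, and the injectivity analog of Lemma \ref{lem:distinct-insertions} guarantees that distinct starting strata produce disjoint outputs; together these verify the hypotheses of Lemma \ref{lem:generically-equal-limits-iterated} stage by stage, so the iterated limit is generically reduced.

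The main combinatorial task is to identify this four-step modified-slide output with $\Tour(0,0,2,2)$. The constraint $y_b=0$ forces $a$ and $b$ to share a vertex in the final tree, matching condition (a) of Definition \ref{def:tour}. The remaining three slides at $v_3$ and $v_4$ should correspond to the three tournament rounds won by $3$ or $4$ beyond the initial $(a,b)$ match: the minimality order $c<1<2$ in the $y_c$-- and $z_c$--equations encodes the deepest-first matchup selection of Definition \ref{def:lazy-tournament}, while the two-stage rule in $z_b+t_3 z_3$ (prefer $b$, then $3$) should reflect the laziness clause, which advances a smaller label past a winning larger label exactly when a still-larger label waits in the next round. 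Because $n=4$, the count $\multichoose{4}{0,0,2,2}$ from \cite{CGM} is small enough to cross-check and then verify the bijection tree-by-tree.

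The hard part will be this last dictionary between the modified-slide sequence and the laziness rule of the lazy tournament. The scheme-theoretic ingredients---leading-term analysis of each hyperplane, generic reducedness at each stage, and dimensions of components---are all direct adaptations of the arguments of Theorem \ref{thm:strata-psi} and Theorem \ref{thm:main}. The combinatorial match, however, requires a careful case analysis on the structure of trivalent trees with $a$ and $b$ adjacent, to check that the particular order of the four modified slides precisely implements the winner-selection and laziness clauses of the tournament.
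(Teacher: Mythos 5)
Your proposal is correct and follows essentially the same route as the paper: the paper's own (brief) argument interprets the four equations as exactly the modified slides you describe --- $y_b=0$ separating $3$ from $a,b$ in $\pi_4(T)$, a $3$-slide with minimality order $c,1,2$, then $z_b+t_3z_3=0$ sliding the $b$-branch (or the $3$-branch when $b$ is already on the $a$-side), and finally an ordinary $4$-slide --- restricting at each stage by dimension and then checking directly that the six resulting points are precisely $\Tour(0,0,2,2)$ (the paper also records an independent computer verification). Your deferred ``dictionary with the laziness rule'' is handled in the paper exactly as you anticipate for $n=4$: by explicit enumeration rather than a general combinatorial correspondence.
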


\begin{proof}
  Since these equations are for one single multidegree, we have simply verified via a computer computation that the intersections limit to the six lazy tournament points in $\Tour(0,0,2,2)$. 
  
  For completeness we also provide a brief proof along the lines of the previous lemmas.  The first equation indicates that $a,b$ are separated from $3$ in the tree in $\Mbar_{0,\{a,b,c,1,2,3\}}$, and the second peforms a $3$-slide where the possible minimal elements are $c,1,2$ in that order.  Writing $(A){-}(B){-}(C)$ to denote the boundary stratum given by the tree with three internal vertices along a path whose leaves are labeled by the sets $A,B,C$ in that order, it follows that we are on one of the (inverse images under $\pi_4$ of the) boundary strata \begin{center}
      $(ab){-}(c){-}(123)$,\,\,$(ab){-}(c1){-}(23)$,\,\,$(ab){-}(c2){-}(13)$, \\  $(abc){-}(1){-}(23)$,\, $(ab1){-}(c){-}(23)$, \,$(ab2){-}(c){-}(13)$
  \end{center}
       in $\Mbar_{0,\{a,b,c,1,2,3\}}$. 
  Pulling back under $\pi_4$, we insert $4$ at a leaf, and by dimensionality we may restrict to the case in which $4$ is inserted at the vertex of degree $4$ in each case above.  The equation $z_b+t_3z_3=0$ slides either the branch containing $b$ (from $4$'s perspective) towards $a$ if the $b$ and $a$ branch do not coincide, and otherwise slides the branch containing $3$ towards $a$.  The final equation then performs an ordinary $4$-slide.  This degeneration process yields $6$ points in Figure \ref{fig:0022}, which are precisely the points of $\Tour(0,0,2,2)$.
\end{proof}

\begin{figure}
    \centering
      \includegraphics{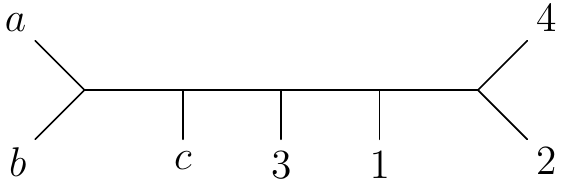} \hspace{1cm} \includegraphics{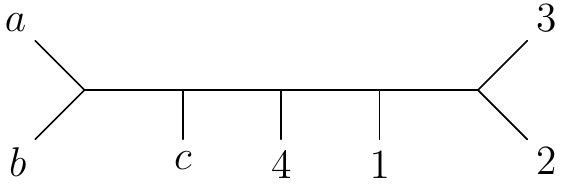} \\\vspace{1cm}
      \includegraphics{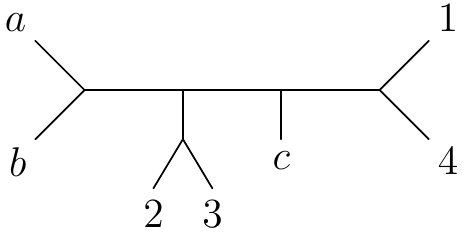}
      \hspace{2cm}
      \includegraphics{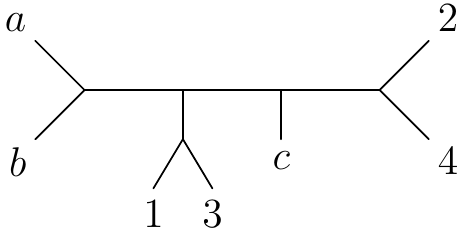} \\\vspace{1cm}
      \includegraphics{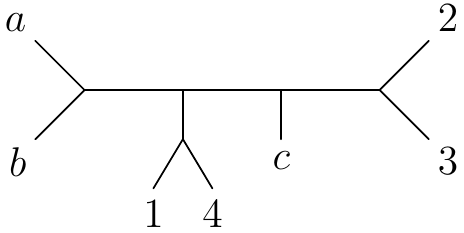}
      \hspace{2cm}
      \includegraphics{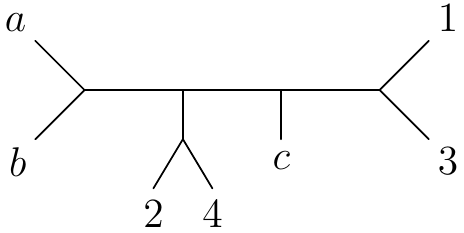} 
    \caption{The six points in $\Tour(0,0,2,2)$.}
    \label{fig:0022}
\end{figure}

The final lemma below completes the proof of Theorem \ref{thm:families}.  We still use $z$ variables to label $\PP^n$ below, and now use $w$ variables to label $\PP^{n+1}$.

\begin{lemma}
Let $\mathbf{k}$ be a composition of $n$ for which $V^{\mathrm{tour}}(\mathbf{k};\vec{t})$ is already defined.  Define $$V^{\mathrm{tour}}((k_1,\ldots,k_{n-1},0,k_{n}+1);\vec{t})$$ by changing the variables $z_i$ of the last $k_n$ equations defining $V^{\mathrm{tour}}(\mathbf{k};\vec{t})$ to the variables $w_i$ of $\PP^{n+1}$, and also adding the additional equation $$w_b+t_{n+1}w_c+t_{n+1}^2w_1+t_{n+1}^3w_2+\cdots+t_{n+1}^{n}w_{n-1}=0.$$  Then $$\lim_{\vec{t}\to \vec{0}} V^{\mathrm{tour}}((k_1,\ldots,k_{n-1},0,k_n+1);\vec{t})=\Tour(k_1,\ldots,k_{n-1},0,k_n+1).$$
\end{lemma}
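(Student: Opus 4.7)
The plan is to decompose the construction into a pullback along a natural forgetful–relabeling map followed by one additional slide, then verify by combinatorial inspection that the result matches the lazy tournament sets.

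Let $S=\{a,b,c,1,\ldots,n\}$ and $S'=S\cup\{n+1\}$. Define $\phi_n=\sigma\circ\pi_n:\Mbar_{0,S'}\to\Mbar_{0,S}$, where $\pi_n$ forgets marked point $n$ and $\sigma$ relabels $n+1$ as $n$. The crucial observation is that, on the interior (and hence outside a codimension-two locus), $|\omega_i|_{\Mbar_{0,S}}\circ \phi_n=|\omega_i|_{\Mbar_{0,S'}}$ for $i\le n-1$, and the coordinates $w_b,w_c,w_1,\ldots,w_{n-1}$ of $|\omega_{n+1}|$ on $\PP^{n+1}$ coincide with the coordinates $z_b,z_c,z_1,\ldots,z_{n-1}$ of $|\omega_n|\circ\phi_n$ on $\PP^n$. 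Since the substitution $z_i\to w_i$ is applied only for $i\in\{b,c,1,\ldots,n-1\}$ and the new $w$-equations do not involve $w_n$, the first $|\mathbf{k}|$ defining equations of $V^{\mathrm{tour}}(\mathbf{k}';\vec{t})$ cut out the scheme $\phi_n^{-1}(V^{\mathrm{tour}}(\mathbf{k};\vec{t}))$. By Lemma \ref{lem:flat-pullbacks} and the inductive hypothesis $\lim V^{\mathrm{tour}}(\mathbf{k};\vec{t})=\Tour(\mathbf{k})$, the iterated limit of the first $|\mathbf{k}|$ equations equals $\phi_n^{-1}(\Tour(\mathbf{k}))$: a union of 1-dimensional boundary strata, each indexed by a tree $T\in\Tour(\mathbf{k})$ (with $n$ relabeled to $n+1$) together with an internal vertex of $T$ at which to insert the new leaf $n$.

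The final limit $t_{n+1}\to 0$ of the added slide equation $\hyp_{n+1}^\omega(t_{n+1})$ executes an $(n+1)$-slide on each 1-dimensional stratum, by Lemma \ref{thm:strata-psi}. By the stability constraint (Remark \ref{rmk:slides-vertex-degree-bound}), the slide is nonempty only when the inserted leaf $n$ lies at $v_{n+1}$; in that case $v_{n+1}$ has exactly four branches $\{n\}$, $\{n+1\}$, $\mathrm{Br}_a$, $\mathrm{Br}_m$, and the slide produces a unique tree $T^*$ in which leaves $n$ and $n+1$ share a new vertex connected by a single edge to another vertex hosting $\mathrm{Br}_a$ and $\mathrm{Br}_m$. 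Thus we obtain a well-defined map $T\mapsto T^*$ from $\Tour(\mathbf{k})$ into the boundary points of $\Mbar_{0,S'}$.

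It remains to verify that this map is a bijection $\Tour(\mathbf{k})\to \Tour(\mathbf{k}')$. For the forward direction: on $T^*$ the first round of the lazy tournament is the face-off $(n,n+1)$ at the shared vertex, which $n+1$ wins and on which laziness cannot trigger (no $u>n$ distinct from $n+1$ exists), so $n+1$ advances and $n$ is eliminated. From round two onward, every face-off and every laziness check on $T^*$ involves only comparisons with labels smaller than $n$, which are preserved by the relabeling $n\leftrightarrow n+1$; hence the remainder of the tournament on $T^*$ mirrors the full tournament on $T$, giving win counts $(k_1,\ldots,k_{n-1},0,k_n+1)=\mathbf{k}'$. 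For the reverse direction: in any $T^*\in\Tour(\mathbf{k}')$, leaf $n$ wins zero rounds and participates in exactly one face-off, so its opponent must be $n+1$ (else $n$ would be the larger member and accumulate a win); thus leaves $n$ and $n+1$ must share a vertex in $T^*$, and the map is invertible by contracting the shared structure and relabeling.

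The main obstacle is the combinatorial bijection in the last step, particularly the subsidiary claim that the lazy tournament on $T^*$ from round two onward matches the tournament on $T$. This requires careful tracking of both face-off priorities and laziness triggers under the $n\leftrightarrow n+1$ relabeling, and ultimately relies on the symmetric role of $n$ and $n+1$ in all tournament conditions involving smaller labels. A secondary technical point is ensuring the pullback identification in the first step extends across the boundary of $\Mbar_{0,S'}$, which is handled by the flatness of $\phi_n$ and the preservation of flat limits under flat pullback.
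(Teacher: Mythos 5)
Your overall architecture (pull back the old locus, then do one last $(n{+}1)$-slide, then match combinatorially with pairing $n$ with $n{+}1$) is close in spirit to the paper's argument, and your combinatorial bijection $\Tour(\mathbf{k})\to\Tour(\mathbf{k}')$ is in fact more detailed than what the paper says. But there is a genuine gap in your first, load-bearing step. The claimed identity ``the first $|\mathbf{k}|$ defining equations of $V^{\mathrm{tour}}(\mathbf{k}';\vec t)$ cut out the scheme $\phi_n^{-1}(V^{\mathrm{tour}}(\mathbf{k};\vec t))$'' is false, and the failure locus is not of codimension two as you assert: it is the \emph{divisor} $D(\{n,n+1\}\mid \mathrm{rest})$. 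Each relocated equation involves only $w_b,w_c,w_1,\ldots,w_{n-1}$ and omits $w_n$, so the corresponding hyperplane in $\PP^{n+1}$ passes through the point $\beta_n$, and its pullback under $|\omega_{n+1}|=|\psi_{n+1}|$ therefore contains the entire contracted divisor $D(\{n,n+1\}\mid\mathrm{rest})$ (the fiber over $\beta_n$), whereas $\phi_n^{-1}$ of the original hyperplane pullback meets that divisor only in a proper subvariety. Your identification of the coordinates of $|\omega_{n+1}|$ with those of $|\omega_n|\circ\phi_n$ is exactly the statement that the projection of $\PP^{n+1}$ away from $w_n$ computes $|\omega_n|\circ\phi_n$, and this projection is undefined precisely over $\beta_n$, i.e.\ along that divisor. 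Consequently the locus cut out by the first $|\mathbf{k}|$ equations is $\phi_n^{-1}(V^{\mathrm{tour}}(\mathbf{k};\vec t))$ \emph{together with} extra components inside $D(\{n,n+1\}\mid\mathrm{rest})$ cut out there only by the $\PP^{\le n-1}$ equations; and since the added equation $w_b+t_{n+1}w_c+\cdots+t_{n+1}^n w_{n-1}$ also omits $w_n$, its pullback contains this divisor as well, so these excess components are never removed by the final intersection and persist in the flat limit. Your steps 3--5 silently discard them; note that they live exactly where all of $\Tour(\mathbf{k}')$ lives (every tree in $\Tour(\mathbf{k}')$ has $n$ and $n{+}1$ paired), so they cannot be waved away by a genericity or dimension remark without an explicit argument.

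For comparison, the paper does not assert your pullback identity: it works stratum by stratum, first taking the limits of the $\PP^{\le n-1}$ equations, then pulling back under $\pi_n$ and $\pi_{n+1}$ (inserting $n$ and $n{+}1$ in all ways), observing that the relocated equations perform the same slides from the perspective of $n{+}1$ while ignoring $n$, and using a dimension count to force the leaf $n$ to remain attached to $v_{n+1}$, before the final $(n{+}1)$-slide pairs $n$ with $n{+}1$. Any complete write-up along your lines needs an additional argument controlling the behaviour of the relocated equations on $D(\{n,n+1\}\mid\mathrm{rest})$, where they vanish identically. A second, much smaller soft spot: in your reverse bijection you conclude from ``$n$'s unique face-off is against the label $n+1$'' that the \emph{leaves} $n$ and $n{+}1$ share a vertex; a priori the opponent edge could be an internal edge to which the label $n{+}1$ has advanced, so this step needs a short argument (or a citation to the tournament paper) rather than the parenthetical you give.
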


\begin{proof}
  First note that the tournament points of $\Tour(k_1,\ldots,k_{n-1},0,k_n+1)$ are in bijection with those of $\Tour(k_1,\ldots,k_n)$, and can be formed from the smaller trees by inserting $n+1$ to pair with $n$.  We show that the process of twisting up the existing hyperplanes and adding the new hyperplane equation has this exact same effect on the intersection points.
  
  Indeed, the equations in all $\PP^i$ for $i\le n-1$ give the same strata as before, and then we pull back under $\pi_n$ and $\pi_{n+1}$ by inserting $n$ and $n+1$ in all possible ways.  Then, applying the relabeled equations in $\PP^{n+1}$ coming from the ones we had before in $\PP^n$ apply the same slide moves except from the perspective of $n+1$ instead of $n$ (ignoring the position of $n$).  But then we need to do a final intersection at $n+1$, so in fact the leaf $n$ must remain attached to $n+1$ at each step.  The final equation then does an ordinary $n+1$-slide, which means that $n$ (being non-minimal) stays attached to $n+1$ and the other branch slides towards $a$. This process is equivalent to making the $n+1$ and $n$ leaves collide.  This completes the proof.
\end{proof}

\section{Further discussion and open problems}\label{sec:conclusion}

We conclude with some further observations and avenues for future research, both in combinatorial directions (Sections \ref{sec:comb-first} through \ref{sec:comb-last}) and geometric (Sections \ref{sec:geom-first} through \ref{sec:geom-last}).

\subsection{Tournaments vs slide points}\label{sec:comb-first}

It follows from~\cite[Theorem 1.5]{GGL-tournaments} and Corollary~\ref{cor:multidegrees} that $|\Tour(\mathbf{k})| = \deg_{\mathbf{k}}(\Omega_n) = |\Slide^\omega(\mathbf{k})|$. These two identities were obtained using different methods. The first follows from a bijection with \emph{column-restricted parking functions} \cite{CGM, GGL-tournaments} which naturally satisfy the asymmetric string recursion. The second follows from counting intersection points with parametrized hyperplanes, and has the inductive structure of the slide rule.

\begin{problem}\label{prob:bijection}
Find a combinatorial bijection between the sets $\Tour(\mathbf{k})$ and $\Slide^\omega(\mathbf{k})$.
\end{problem}

One possible route to solving this problem is to use column-restricted parking functions as an intermediate object.  Along these lines, for the $\Psi_n$ setting, parking functions may be generalized to a set of objects enumerated by the ordinary multinomial coefficient $$\binom{n}{k_1,\ldots,k_n}=\frac{n!}{k_1!\cdots k_n!}=\deg_\mathbf{k}(\Psi_n)$$ (when $\sum k_i=n$).  We sketch here one way to see combinatorially that $|\Slide^\psi(\mathbf{k})|=\binom{n}{k_1,\ldots,k_n}$ for $\sum k_i=n$. We assign to each tree $T$ in $\Slide^\psi(\mathbf{k})$ a word $w$ in the letters $1,2,\ldots,n$ in which the letter $i$ occurs $k_i$ times. We construct $w$ by beginning with an empty word; then at each $i$-slide, we insert an $i$ into $w$ as follows. For each internal vertex $v \in T$, let $j_v$ be the minimal leaf vertex among the non-$a$ branches of $T$ at $v$. Order the internal vertices $v$ by the value of $j_v$, breaking ties by saying $v > v'$ if $v$ is closer to $a$. Let $v_i$ be the internal vertex adjacent to leaf $i$, and let $j$ be the position of $v_i$ in the ordering of the internal vertices. Then we insert $i$ into $w$ at the $j$th position from the left.

This suggests the possibility of constructing an analogous bijection between $\Slide^{\omega}(\mathbf{k})$ and the column-restricted parking functions, which in turn are in bijection with $\Tour(\mathbf{k})$.

\subsection{Pattern avoidance}

One difficulty in Problem \ref{prob:bijection} is that the sets $\Slide^\omega(\mathbf{k})$ and $\Tour(\mathbf{k})$ do not always consist of trees of the same shapes.  For instance, when $\mathbf{k}=(1,1,\ldots,1)$, every element of $\Tour(\mathbf{k})$ corresponds to a {\bf caterpillar graph}, meaning that its internal vertices form a path. Not every element of $\Slide^\omega(\mathbf{k})$, however, is a caterpillar. Intriguingly, there is a characterization of the caterpillar graphs in $\Slide^\omega(\mathbf{k})$ via permutation pattern avoidance.

We say a permutation $\pi$ {\bf avoids the pattern $23$-$1$} if there do not exist indices $i$ and $j$ with $i+1<j$ such that $\pi_j < \pi_i<\pi_{i+1}$. For example, the $15$ permutations on $4$ letters that avoid $23$-$1$ are
\[4321,3214, 4213, 2143, 2134, 4312, 3142, 3124, 4132, 1432, 1324, 4123, 1423, 1243, 1234,
\]
whereas the permutation $2431$ contains a $23$-$1$ pattern with $i=1,j=4$. It turns out that the slide labelings on caterpillar graphs in $\Slide^{\omega}(1,1,\ldots,1)$ correspond precisely to the $23$-$1$-avoiding permutations.  For instance, the following tree occurs in $\Slide^\omega(1,1,1,1)$ and has a slide labeling whose labeled internal edges, from left to right, form the word $2143$: 

\begin{center}
    \includegraphics{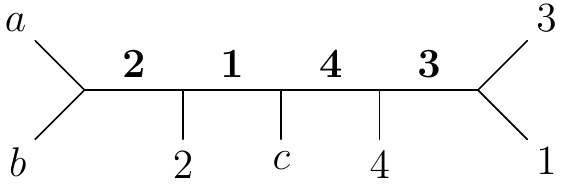}
\end{center}

It would be interesting, and might shed new light on the structure of $\Slide^\omega(\mathbf{k})$, to describe the set (or various subsets of it) by pattern avoidance conditions. Notably, this may be an avenue through which to recover the asymmetric string recursion, and so obtain a bijection to tournaments.

We prove this general correspondence between caterpillar graphs in $\Slide^\omega(1,1,\dots, 1)$ and $23$-$1$-avoiding permutations here.  Below, we use the convention that the leaves $a,b$ are drawn on the left and the path moves out towards the right, so moving left (resp. right) means moving along the path towards (resp. away from) $a$.

\begin{prop}\label{prop:patterns}
Let $\mathrm{Cat}^\omega_n \subseteq \Slide^\omega(1,1,\dots,1)$ be the subset of trivalent trees that correspond to caterpillar curves. For each tree $T\in \mathrm{Cat}^\omega_n$, define the word $w(T)$ by reading the labels in the slide labeling of $T$ from left to right. The set of words $$\{w(T):T\in \mathrm{Cat}^\omega_n\}$$ are precisely the $23$-$1$-avoiding permutations of length $n$, and in fact the words $w(T)$ are all distinct. 
\end{prop}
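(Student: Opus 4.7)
The plan is to exhibit an explicit bijection between $\mathrm{Cat}^\omega_n$ and the set of $23$-$1$-avoiding permutations of $\{1,\ldots,n\}$. Label the internal vertices of a trivalent caterpillar $u_0, u_1, \ldots, u_n$ from the $a$-side outward, let $e_k = (u_{k-1}, u_k)$, and write $p_\ell$ for the internal vertex hosting leaf $\ell$. Unwinding Definition~\ref{def:slide-labeling}, the slide process assigns label $\ell$ to the edge $e_{k_\ell}$, where $k_\ell$ is the leftmost index of the merged vertex containing $\ell$ at step $\ell$; equivalently, for $w = w(T)$ we have $w_{k_\ell} = \ell$ precisely when $p_\ell \geq k_\ell$ and all of $w_{k_\ell+1}, \ldots, w_{p_\ell}$ exceed $\ell$. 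Hence $p_\ell$ must lie in $V_\ell := \{k_\ell, \ldots, K_\ell\}$, where $K_\ell$ is the largest index such that $w_{k_\ell+1}, \ldots, w_{K_\ell}$ all exceed $\ell$.

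For the forward direction, suppose $w = w(T)$ contains a $23$-$1$ pattern at indices $(i, i+1, j)$ with $w_j < w_i < w_{i+1}$; set $\ell_1 = w_i$, $\ell_2 = w_{i+1}$, and $\ell_3 = w_j$. Every label $\ell'$ appearing among $w_{i+1}, \ldots, w_{K_{\ell_1}}$ satisfies $K_{\ell'} \leq K_{\ell_1}$ (because $w_{K_{\ell_1}+1} \leq \ell_1 < \ell'$), so $p_{\ell'}$ lies in $\{i+1, \ldots, K_{\ell_1}\}$. Because $K_{\ell_1} \leq j-1 \leq n-1$, these are all capacity-one middle vertices, and the $K_{\ell_1} - i$ labels fill the $K_{\ell_1} - i$ slots exactly; this forces $p_{\ell_1} = i$. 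At slide step $\ell_2$ both $e_i$ and $e_{i+1}$ are uncontracted, so the merged vertex $\bar v$ immediately left of $V_q$ equals $\{u_i\}$ and hosts only the leaf $\ell_1$, giving $m_{\bar v} = \ell_1$. But $p_{\ell_3} \geq j > K_{\ell_1}$, so leaf $\ell_3$ contributes to $m_{v_{\ell_2}}$, yielding $m_{v_{\ell_2}} \leq \ell_3 < \ell_1 = m_{\bar v}$ and violating the $\omega$-slide inequality. Thus $w(T)$ always avoids $23$-$1$, and the same capacity-count argument (applied to arbitrary $w$) shows each $p_\ell$ is uniquely determined by $w$ via the greedy rule ``place $\ell$ at the largest available vertex of $V_\ell$'', giving injectivity of $T \mapsto w(T)$.

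Conversely, for $w$ avoiding $23$-$1$ I construct $T_w$ by greedy placement (process $\ell = n, n{-}1, \ldots, 1$ and put $\ell$ at the largest vacant slot in $V_\ell$), then place $b$ at $u_0$ beside $a$ and $c$ at the unique remaining empty internal vertex. A matching capacity count shows greedy always succeeds, producing a trivalent caterpillar, and the characterization in the setup gives $w(T_w) = w$. It remains to verify the $\omega$-slide inequalities at every step. The inequality $\ell > m_{v_\ell}$ holds because either $c$ sits at a position $\geq k_\ell$ (so $m_{v_\ell} \leq c < \ell$), or the capacity count forces some numeric leaf smaller than $\ell$ into that range. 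The inequality $m_{v_\ell} > m_{\bar v}$ is exactly the negation of the bad configuration from the forward direction: $23$-$1$-avoidance of $w$ ensures that the leaf occupying the rightmost vertex $u_{k_\ell - 1}$ of $V_{q-1}$ is either $c$, or a numeric leaf small enough relative to what appears at positions $\geq k_\ell$.

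The main obstacle is verifying $m_{v_\ell} > m_{\bar v}$ at every slide step. One must track, for each step $\ell$, the leftmost index of $V_{q-1}$ and identify which leaf sits at its rightmost vertex $u_{k_\ell - 1}$. The analysis splits into cases depending on whether $V_{q-1}$ reaches $u_0$ (so $m_{\bar v} = b$), contains $c$, or consists only of middle vertices holding numeric leaves; the most delicate case is the last, where the configuration ruled out by $23$-$1$-avoidance is precisely ``$u_{k_\ell - 1}$ hosts a label larger than some leaf at positions $\geq k_\ell$ distinct from $\ell$''.
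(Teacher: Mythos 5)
Your forward direction (that every $w(T)$ avoids $23$-$1$) is correct and complete, and it is a genuinely different route from the paper's: you argue directly via the windows $V_\ell$ and a capacity count, whereas the paper inducts on $n$ by deleting leaf $n$. However, the other half of the statement --- that \emph{every} $23$-$1$-avoiding permutation is realized --- is where the paper does most of its work (the leaf labeling algorithm, its well-definedness in Lemma \ref{lem:well-defined}, and the verification that the constructed tree admits the slide labeling $w$), and your proposal does not prove it. After defining $T_w$ by greedy placement you write that ``it remains to verify the $\omega$-slide inequalities at every step,'' give one-sentence reasons, and then explicitly defer the key inequality $m_{v_\ell} > m_{\overline{v}}$ to an unexecuted case analysis (``the main obstacle\ldots the analysis splits into cases\ldots the most delicate case\ldots''). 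That verification is precisely the nontrivial content of surjectivity; even your auxiliary claim for $\ell > m_{v_\ell}$ (``either $c$ sits at a position $\geq k_\ell$ or the capacity count forces some numeric leaf smaller than $\ell$ into that range'') is an assertion that itself needs proof. As written, surjectivity is an outline, not a proof.

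The injectivity claim also has a gap. You assert that ``the same capacity-count argument'' shows each $p_\ell$ is forced to the greedy position, but counting alone does not determine the positions: for $w=12$ (so $n=2$) the window of leaf $1$ is $\{1,2\}$ and the count permits leaf $1$ at $u_1$, giving candidate trees with leaves $\{a,b\}$ or $\{a,c\}$ at $u_0$, leaf $1$ at $u_1$, and $\{c,2\}$ or $\{b,2\}$ at $u_2$; these are eliminated only because the slide inequality $m_{v_2}>m_{\overline{v}}$ fails (since $b,c<1$), i.e.\ by the validity conditions you have not invoked. Moreover your injectivity argument never addresses the placement of $b$ and $c$ at all: one must rule out, say, two caterpillars with identical numeric placements but with $b$ and $c$ swapped, which again requires either the slide inequalities or the fact (true, but needing justification) that $a$ and $b$ share an end vertex in every tree of $\Slide^\omega(1,\ldots,1)$. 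The paper sidesteps both issues: distinctness of the words follows from the generative slide procedure (the position of $i$ in $w(T)$ records where leaf $i$ was inserted, so $w(T)$ determines $T$), and surjectivity is carried out in full via the leaf labeling algorithm. To salvage your approach you would need to actually prove that greedy placement is forced in any valid caterpillar and to complete the case analysis establishing $m_{v_\ell}>m_{\overline{v}}$ for $T_w$; as it stands both halves of your bijection rest on unverified claims.
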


To prove this, we define the following \textbf{leaf labeling algorithm}.

\begin{definition}[Leaf labeling algorithm]
Let $w$ be a $23$-$1$-avoiding permutation.  Define the tree $T_w$ to be the tree constructed as follows: First label the internal edges of a caterpillar tree by $w_1,\ldots,w_n$ from left to right, and label the leftmost two leaves $a,b$.  Then label the remaining leaves $n,n-1,n-2,\ldots,1,c$ in descending order via the following rule:  
    
At step $n-i$, let ${\bf j}$ be the edge label just to the right of edge ${\bf n-i}$ (if such an edge ${\bf j}$ exists).  \\
    \textbf{Case 1:} If ${\bf j}<{\bf n-i}$, then label the leaf just to the right of ${\bf n-i}$ by $n-i$.\\
    \textbf{Case 2:} If ${\bf j}>{\bf n-i}$ or ${\bf j}$ does not exist, label the rightmost unlabeled leaf to the right of ${\bf n-i}$ by $n-i$.

Finally, label the remaining unlabeled leaf by $c$.
\end{definition}

\begin{remark}\label{rmk:larger-right}
At any Case 2 step, all edge labels to the right of ${\bf n-i}$ are greater than ${\bf n-i}$, for otherwise ${\bf n-i}$ and $\bf j$ would form a $23$-$1$ pattern with a smaller label to the right.
\end{remark}

As an example, the tree shown above for the permutation $w=2,1,4,3$ is precisely the tree $T_w$ obtained by the leaf labeling algorithm.  The following lemma shows that the algorithm is always well-defined.

\begin{lemma}\label{lem:well-defined}
Whenever Case 2 of the leaf labeling algorithm applies, there are exactly two unlabeled leaves available to the right of edge ${\bf n-i}$, one of which is the leaf just to the right of it.  Whenever Case 1 applies, the leaf just to the right of ${\bf n-i}$ has not yet been labeled.
\end{lemma}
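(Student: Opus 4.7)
The plan is to prove both assertions simultaneously by induction on the algorithm, descending on $s := n-i$, while tracking which leaves have been labeled. Index the internal vertices of the caterpillar as $V_0, V_1, \ldots, V_n$ from left to right so that the edge carrying label $\sigma(k) = w_k$ lies between $V_{k-1}$ and $V_k$. At step $s$, let $k := p(s)$ be the unique index with $\sigma(k) = s$. Case~1 then corresponds to $\sigma(k+1) < s$ and Case~2 to $\sigma(k+1) > s$ or $k = n$. The base case $s = n$ is immediate since, apart from $a$ and $b$, no leaves have yet been touched.

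The key structural observation, which is where $23$-$1$-avoidance enters, is that \emph{no prior step $s' > s$ can be a Case~2 step with $p(s') = k'' < k$.} Such a step would require $\sigma(k''+1) > s'$ (an ascent at position $k''$), and then $23$-$1$-avoidance forces $\sigma(m) \ge s'$ for every $m > k''+1$; in particular $\sigma(k) \ge s'$, contradicting $\sigma(k) = s < s'$. Moreover, no Case~1 step with $s' \ne s$ can have $p(s') = k$, since that would force $\sigma(k) = s'$. Taken together, the leaf at $V_k$ has not been labeled in any prior step, which establishes the Case~1 claim.

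For Case~2, a second application of $23$-$1$-avoidance in the forward direction shows that $\sigma(m) > s$ for every $m > k$, so all $n-k$ edges to the right of edge $\mathbf{s}$ have already been processed. Each such prior step labels a leaf at a position strictly greater than $k$ (Case~1 labels $V_{p(s')}$ itself; Case~2 labels a rightmost unlabeled leaf at a position $\ge p(s')$), and by the observation of the previous paragraph, no prior step with $p(s') < k$ could have labeled a leaf at position $> k$. Hence exactly $n-k$ of the $n-k+1$ leaves at positions $V_{k+1}, \ldots, V_n$ are labeled, leaving exactly one unlabeled; combining with the leaf at $V_k$, which is also unlabeled by the same argument, yields precisely two unlabeled leaves to the right of edge $\mathbf{s}$, one of which is just to the right of it.

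The main obstacle is identifying the right inductive invariant: rather than trying to describe the whole set of unlabeled leaves at each step explicitly, it is cleaner to isolate the ``forbidden configuration'' (a prior Case~2 step with $p(s') < k$) that $23$-$1$-avoidance rules out, and then deduce both the presence of $V_k$'s leaf and the count of unlabeled leaves to its right as consequences. Once this reduction is made, the counting argument and $23$-$1$-avoidance do the rest of the work without further case analysis.
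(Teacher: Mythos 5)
Your proof is correct and takes essentially the same route as the paper's: your ``structural observation'' is precisely the paper's use of $23$-$1$-avoidance (the pattern at positions $k''$, $k''+1$, $k$ is the paper's $\mathbf{m},\mathbf{j'},\mathbf{n-i}$ pattern for Case 1, and its Case-2 form is Remark \ref{rmk:larger-right}), followed by the same induction on the algorithm and the same count of the $n-k$ already-processed edges to the right. Only trivial bookkeeping differs, e.g.\ the degenerate case $k=n$ (where $V_n$ carries two leaves, so your ``$n-k+1$ leaves at $V_{k+1},\dots,V_n$'' tally needs a one-line separate check) and the subcase $k=k''+1$ of the structural observation, both of which are immediate.
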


\begin{proof}
For the Case 2 claim, we first show that at step $n-i$, the only leaves to the right of edge ${\bf n-i}$ that have already been labeled are labeled by the edge values to the right of ${\bf n-i}$.  Assume for contradiction that some leaf to the right of ${\bf n-i}$ is labeled by $y>n-i$ where $\bf y$ is to the left of ${\bf n-i}$.  Then since leaf $y$ is not adjacent to edge $\bf y$, it was labeled using Case 2 on step $\bf y$, and so in fact ${\bf n-i}>{\bf y}$ by Remark \ref{rmk:larger-right}, a contradiction.

Let $k$ be the number of internal edges to the right of ${\bf n-i}$; then there are $k+2$ leaves to the right of ${\bf n-i}$, and so at least two leaves to the right of ${\bf n-i}$ are available. By induction on $i$, we may assume the earlier steps of the algorithm are well-defined, in particular each leaf $x > n-i$ is to the right of the edge labeled ${\bf x}$. This shows that the leaf to the right of the edge ${\bf n-i}$ is unlabeled; and there is exactly one other unlabeled edge further to the right.
    
For Case 1, suppose for contradiction that the leaf just to the right of ${\bf n-i}$ was already labeled on a previous step, say by $m >n-i$.  Then on step $m$, since $m$ is not just to the right of edge label $\bf m$, it used Case $2$ of the algorithm.  Thus edge label $\bf m$ is just to the left of some ${\bf j'}>\bf m$, and both are to the left of ${\bf n-i}$.  Note that $j'>n-i$, so $\bf m,{\bf j'},{\bf n-i}$ form a $23$-$1$ pattern, a contradiction.
\end{proof}

\begin{proof}[Proof of Proposition \ref{prop:patterns}]
First note that the words $w(T)$, which come from the slide labeling, are distinct since they are constructed inductively by starting with $1$ and then inserting a $2$, $3$, $4$, etc, with the position of insertion corresponding to the position we insert the new leaf at the $i$-th step of the slide rule.  

We next show by induction on $n$ that each of the words $w(T)$ is $23$-$1$ avoiding.  Assume it is true for $n-1$, and let $T\in \mathrm{Cat}^\omega_n$.  Then deleting the leaf $n$ from $T$ results in a caterpillar tree $S=\pi_n(T)\in \mathrm{Cat}^{\omega}_{n-1}$, so the slide labeling of $S$ is $23$-$1$ avoiding by the inductive hypothesis.  
  
Note that in the slide labeling of $T$, the internal edge just left of leaf edge $n$ is labeled first, by $\bf n$, and then the remaining edges are labeled as they were in $S$.  Therefore, the word $w(T)$ is obtained by inserting $\bf n$ into $w(S)$ accordingly. So, to show that $w(T)$ is still $23$-$1$ avoiding, it suffices to show that the $\bf n$ that is inserted does not create a $23$-$1$ pattern.  Let $\bf x$ be the slide label just left of $\bf n$ in $w(T)$, and assume for contradiction that there is some slide label $\bf y<\bf x$ to the right of $\bf n$ in $w(T)$.  Let $z$ be the leaf just to the left of the slide label $\bf n$; then by the definition of the $\omega$-slide labeling, $z$ is less than all leaf labels to its right.  Thus in particular $z<y$ and so $z<x$ by transitivity.  
  
  \begin{center}
     \includegraphics{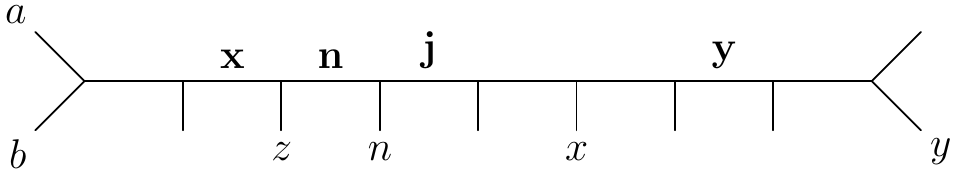}
  \end{center}
  
In particular, $z\neq x$, so $x$ labels some leaf to the right of $n$.  Then since the slide label left of $\bf n$ is $\bf x$, the internal edge labels on the path from leaf $n$ to $x$ must all be greater than $\bf x$ as well; let $\bf j$ be the leftmost such label.  Then $\bf y< \bf x<\bf j$ and these three edges form a $23$-$1$ pattern in $w(S)$, a contradiction.  It follows that $w(T)$ is $23$-$1$ avoiding as well.

We finally show that if $w$ is any $23$-$1$-avoiding permutation, the tree $T_w$ obtained by the leaf labeling algorithm has valid slide labeling $w$.  It suffices to check the condition (3) in Definition \ref{def:slide-labeling} comparing minimal elements.  We first check the condition at the edge label $\bf n$.  Let $z$ label the leaf just left of $\bf n$, and let $\bf x$ be the edge label just left of $\bf n$.  At step $x$ of the leaf labeling algorithm, since $n>x$ we are in Case 2, and so the leaf labeled by $x$ is to the right of $\bf n$ by Lemma \ref{lem:well-defined}.  Moreover, all other leaves to the right of $\bf n$ were already labeled and are greater than $x$.  Thus $x$ is the minimal leaf label to the right of $\bf n$.  Furthermore, since the labeling of leaf $z$ occurs after $x$, we have $z<x$.  Therefore the slide labeling is valid at $\bf n$. It is valid for all smaller labels by a similar argument after contracting edge $\bf n$ and deleting leaf $n$ (since $n$ labels the leaf just after $\bf n$).
\end{proof}

Since the number of $23$-$1$-avoiding permutations is the $n$th Bell number $B_n$ (see Claesson \cite{Claesson} and OEIS entry A000110 \cite{oeis}), we therefore have the following corollary.

\begin{corollary}
The number of caterpillars in $\Slide^\omega(1,1,\dots,1)$ is the $n$th Bell number $B_n$.
\end{corollary}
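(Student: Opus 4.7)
The plan is to deduce this corollary as a direct consequence of Proposition \ref{prop:patterns} together with a known enumerative result of Claesson. Proposition \ref{prop:patterns} establishes that the map $T \mapsto w(T)$, reading the slide labels of internal edges from left to right, gives a well-defined injection from $\mathrm{Cat}^\omega_n$ into the set of permutations of $\{1,2,\ldots,n\}$. The proposition further asserts that the image of this map is exactly the set of $23$-$1$-avoiding permutations, and that distinct caterpillars produce distinct words. Thus the map $T \mapsto w(T)$ is a bijection from $\mathrm{Cat}^\omega_n$ to the set of $23$-$1$-avoiding permutations of length $n$.

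Therefore the count $|\mathrm{Cat}^\omega_n|$ equals the number of $23$-$1$-avoiding permutations of length $n$. By Claesson's classification of permutation classes avoiding a single generalized (vincular) pattern of length three \cite{Claesson}, the $23$-$1$-avoiding permutations of length $n$ are enumerated by the Bell number $B_n$, matching OEIS entry A000110 \cite{oeis}. Combining these two facts yields $|\mathrm{Cat}^\omega_n| = B_n$, which is the desired statement.

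The main (and only) obstacle here is already handled by Proposition \ref{prop:patterns}; the corollary itself requires no new combinatorial construction. One could optionally include a brief reminder of Claesson's bijection (for instance, the map sending a $23$-$1$-avoiding permutation to the set partition whose blocks are the maximal increasing runs, read right-to-left) to make the appearance of $B_n$ transparent, but this is not strictly necessary for the proof.
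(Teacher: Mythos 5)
Your proposal is correct and matches the paper's own argument: the paper likewise deduces the corollary immediately from Proposition~\ref{prop:patterns} combined with Claesson's result that $23$-$1$-avoiding permutations are enumerated by the Bell numbers. Nothing further is needed.
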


\subsection{The \texorpdfstring{$S_n$}{} action and slide sets}\label{sec:comb-last}

The symmetric group $S_n$ acts on $\Mbar_{0,n+3}$ by permuting the marked points $1,\ldots,n$. Likewise, it acts on psi classes and boundary strata by relabeling. Thus, permuting the leaves of the trees in $\Slide^{\psi}(k_1,\ldots,k_n)$ according to a permutation $\sigma \in S_n$ gives a positive formula for the product 
\[\psi_{\sigma(1)}^{k_1} \cdots  \psi_{\sigma(n)}^{k_n} = \psi_1^{k_{\sigma^{-1}(1)}} \cdots  \psi_n^{k_{\sigma^{-1}(n)}}\]
as the sum of boundary classes $[X_{\sigma(T)}]$ for $T\in \Slide^{\psi}(\bf k)$. These strata may be obtained as the limiting intersections with the hyperplanes formed by applying $\sigma$ to each of the hyperplanes defining $V^{\psi}(\mathbf{k},\vec{t})$ (this also has the effect of changing a hyperplane of class $\psi_i$ to one of class $\psi_{\sigma(i)}$ and relabeling the projective coordinates).  However, this gives a different set of trees than those enumerated by $\Slide^{\psi}(k_{\sigma^{-1}(1)},\ldots,k_{\sigma^{-1}(n)})$, because the slide rule is sensitive to the ordering of the indices, and the iterated limit is also effectively taken in a different order.

Nonetheless, the two resulting sets of strata must be equinumerous (see Remark \ref{rmk:multiplicity}). Therefore, there must be a bijection between $\Slide^\psi(k_1,\ldots,k_n)$ and $\Slide^\psi(k_{\sigma(1)},\ldots,k_{\sigma(n)})$.

\begin{problem}
For any permutation $\sigma \in S_n$ and any composition $\mathbf{k}$, construct a combinatorial bijection between $\Slide^\psi(k_1,\ldots,k_n)$ and  $\Slide^\psi(k_{\sigma(1)},\ldots,k_{\sigma(n)})$.
\end{problem}

As discussed above, the bijection itself is not given by simply applying a permutation to the leaf labels of the trees. In fact, even the shapes of the trees are not preserved; the shapes in $\Slide^{\psi}(0,1,2)$ do not match those of $\Slide^\psi(0,2,1)$.

This problem boils down to understanding how reordering the indices on the hyperplane equations changes the slide points that we obtain.  For a single $i$-slide, it simply changes the notion of the ``$i$-minimal element''. After more than one slide, however, the resulting trees may be very different.

A slight variant is to consider arbitrary sequences of slides, such as $\psi_1\psi_2\psi_1\psi_2$:

\begin{problem}
Let $w = w_1 \cdots w_c$ be a word in the symbols $1, \ldots, n$, containing $k_i$ $i$'s for each $i$. Let $\Slide^\psi_{\mathrm{word}}(w)$ denote the set of trees obtained by performing a $w_1$-slide, then a $w_2$-slide, and so on. Construct a combinatorial bijection between $\Slide^\psi_{\mathrm{word}}(w)$ and $\Slide^\psi(k_1, \ldots, k_n).$
\end{problem}

\subsection{Limiting hyperplanes for tournament points (general case)}\label{sec:geom-first}

In Section~\ref{sec:tournament-hyperplanes}, we exhibit certain infinite families of tournament points as limiting hyperplane intersection points. It remains to be seen whether all tournament points admit such a geometric realization. A hint toward achieving this goal is~\cite[Theorem 1.8]{GGL-tournaments}, which states that the coordinates of the points $\Tour(k_1,\dots, k_n)$ in the $\mathbb{P}^r$ factor all lie on the $k_r$ hyperplanes
\[ z_b = 0, z_c = 0,z_1 = 0,\dots, z_{k_r-2} = 0\]
where $[z_b:z_c:z_1:\cdots:z_{r-1}]$ are the projective coordinates of $\mathbb{P}^r$. This suggests looking for a parametrized family of hyperplanes such that the hyperplanes themselves limit to the ones listed above. The smallest case not covered by the results in Section~\ref{sec:tournament-hyperplanes} is $\mathbf{k} = (1,1,1)$.

\begin{problem}\label{prob:tournament-hyperplanes}
Generalize Theorem~\ref{thm:families} to all Catalan tuples $(k_1,k_2,\dots, k_n)$.
\end{problem}

For $\mathbf{k}=(1,1,1)$, we could not find an appropriate family of hyperplanes using modified slides as in Section \ref{sec:tournament-hyperplanes}. We suspect that it is not possible. It may instead be necessary to modify the tournament points themselves (for example, the position of the leaf $b$ is mostly irrelevant to the tournament algorithm). 

\subsection{Reducedness}

We have seen that the limiting intersections in Theorem \ref{thm:main} are generically reduced. 

\begin{problem}\label{prob:reducedness}
Determine whether the limiting fibers in Theorem \ref{thm:main} are reduced.
\end{problem}

We do not know the answer to this question when $\sum k_i < n$. An affirmative answer would mean that Theorem \ref{thm:main} also computes $\psi^\mathbf{k}$ and $\omega^\mathbf{k}$ in the  $K$-theory ring $K(\Mbar_{0,n+3})$, as the class of the structure sheaf of a union of strata. If so, and if the components $X_T$ for $T \in \Slide^\psi(\mathbf{k})$ intersect sufficiently nicely, it would be possible to extract K-theoretic formulas for $\psi^\mathbf{k}$ and $\omega^\mathbf{k}$ as alternating sums in the classes of the structure sheaves $[\mathcal{O}_{X_T}]$, by inclusion-exclusion.

\subsection{Kappa classes and multiplicity}\label{sec:kappa-questions}

Our formulas for kappa classes and generalized kappa classes, Theorem \ref{thm:kappa} and Corollary \ref{cor:multiplicity}, consist of boundary classes with multiplicities often greater than $1$. In general, we expect that no multiplicity-free formula can exist.

\begin{problem} \label{prob:kappa}
Fix $\mathbf{r} = (r_1, \ldots, r_m)$. Let $c = \sum r_i - m$ and let $s_n^c$ be the number of boundary strata of codimension $c$ on $\Mbar_{0,n+3}$. Is it true that \[\lim_{n \to \infty} \frac{|R(n; \mathbf{r})|}{s_n^c} = \infty\ ?\]
\end{problem}
Indeed, $\kappa_0$ is $n+1$ times the fundamental class of $\Mbar_{0.n+3}$. For $\kappa_1$, a straightforward summation in Corollary \ref{cor:multiplicity} shows that $\kappa_1$ is the sum of $(n-1) 2^n + 1$ boundary divisors (counted with multiplicity), whereas $\Mbar_{0,n+3}$ has only $4 \cdot 2^n - n - 4$ distinct boundary divisors. Hence, by Remark \ref{rmk:multiplicity}, $\kappa_1$ can't be expressed as a multiplicity-free sum of boundary divisors for $n>5$, and the limit in Problem \ref{prob:kappa} holds.

\subsection{Other intersection products}\label{sec:geom-last}

Finally, it would be interesting to extend the methods of this paper to other intersection products on moduli spaces of curves.

\begin{problem}
Construct degenerations of complete intersections of $\psi$ and $\omega$ classes on Hassett spaces $\Mbar_{0,\vec{w}}$ \cite{hassett2003}.
\end{problem}

We expect that the methods of this paper are special to genus $0$, but any extensions to positive genus would also be of interest.

\bibliography{myrefs}
\bibliographystyle{plain}

\end{document}